\newtheorem{theorem}{Theorem}
\newtheorem{lemma}{Lemma}
\newtheorem{corollary}{Corollary}
\newtheorem{proposition}{Proposition}
\newtheorem{definition}{Definition}
\newtheorem{remark}{Remark}
\newtheorem{convention}{Convention}
\begin{document}
\setlength{\textwidth}{4.7in}
\setlength{\textheight}{7.5in}
\title[Isoparametric hypersurfaces]{Isoparametric hypersurfaces with four
principal curvatures, III}
\author{Quo-Shin Chi}

\thanks{The author was partially supported by NSF Grant No. DMS-0103838}
\address{Department of Mathematics, Washington University, St. Louis, MO 63130}
\email{chi@math.wustl.edu}
%%\date{}

\begin{abstract} The classification work~\cite{CCJ},~\cite{Chiq} left unsettled
only those anomalous 
isoparametric hypersurfaces with four principal curvatures and multiplicity
pair $\{4,5\},\{6,9\}$ or $\{7,8\}$ in the sphere.

By systematically exploring the ideal theory in commutative
algebra in conjunction with the geometry of isoparametric hypersurfaces,
we show that an isoparametric hypersurface with four principal
curvatures and multiplicities $\{4,5\}$ in $S^{19}$ is homogeneous, and, moreover,
an isoparametric hypersurface with four principal curvatures and
multiplicities $\{6,9\}$ in $S^{31}$ is
either the inhomogeneous one constructed by Ferus, Karcher and M\"{u}nzner, or the one that
is homogeneous.

This classification reveals the striking resemblance between these two rather different
types of isoparametric hypersurfaces in the homogeneous category, even though
the one with multiplicities $\{6,9\}$ is of the type constructed by Ferus, Karcher
and M\"{u}nzner and the one with multiplicities $\{4,5\}$ stands alone by
itself. The quaternion and the octonion algebras play a fundamental role in their
geometric structures.

A unifying theme in~\cite{CCJ},~\cite{Chiq} and the present sequel to them is Serre's criterion of normal 
varieties. Its technical side pertinent to our situation that we
developed in~\cite{CCJ},~\cite{Chiq} and extend in this sequel is instrumental.
  
The classification leaves only the case of multiplicity pair $\{7,8\}$ open.
\end{abstract}
\keywords{Isoparametric hypersurfaces}
\subjclass{Primary 53C40}
\maketitle
\pagestyle{myheadings}
\markboth{QUO-SHIN CHI}{ISOPARAMETRIC HYPERSURFACES}
%*************
%%\footnote{Keywords: Isoparametric hypersurfaces; Subjclass: Primary, 53C40}

\section{Introduction} An isoparametric hypersurface $M$ in the sphere is one
whose principal curvatures and their multiplicities are fixed constants.
The classification of such hypersurfaces has been an outstanding problem in
submanifold geometry, listed as Problem 34 in~\cite{Ya}, as can be witnessed by its long history.
Through M\"{u}nzner's work~\cite{Mu}, we know the number $g$
of principal curvatures is 1,2,3,4 or 6, and there are at most two multiplicities
$\{m_1,m_2\}$ of the principal curvatures, occurring alternately when
the principal curvatures are ordered,
associated with $M$ ($m_1=m_2$ if $g$ is odd).
Over the ambient Euclidean space in which $M$ sits there is
a homogeneous polynomial $F$, called the Cartan-M\"{u}nzner polynomial, of degree $g$ that satisfies
\begin{equation}\nonumber
|\nabla F|^2(x)=g^2|x|^{2g-2},\quad (\Delta F)(x)=(m_2-m_1)g^2|x|^{g-2}/2
\end{equation}
whose restriction $f$ to the sphere has image in $[-1,1]$ with $\pm 1$ the only critical values.
For any $c\in(-1,1)$, the preimage $f^{-1}(c)$ is an isoparametric
hypersurface with $f^{-1}(0)=M$. This 1-parameter of
isoparametric hypersurfaces degenerates to the two submanifolds
$f^{-1}(\pm 1)$ of codimension $m_1+1$ and $m_2+1$ in the sphere.

The isoparametric hypersurfaces with $g=1,2,3$ were classified by
Cartan to be homogeneous
~\cite{Car2},~\cite{Car3}. For $g=6$, it is known that $m_1=m_2=1$
or 2 by Abresch~\cite{A}. Dorfmeister
and Neher~\cite{DN} showed that the isoparametric hypersurface is homogeneous in the
former case and Miyaoka~\cite{Mi2} settled the latter.

For $g=4$, there are infinite classes of inhomogeneous examples
of isoparametric hypersurfaces, two of which were first constructed by Ozeki and Tackeuchi~\cite[I]{OT} 
to be generalized later by Ferus, Karcher and M\"{u}nzner~\cite{FKM},
referred to collectively as isoparametric hypersurfaces of OT-FKM type subsequently. We remark that
the OT-FKM type includes all the homogeneous examples barring the two
with multiplicities $\{2,2\}$ and $\{4,5\}$. To construct the OT-FKM type,
let $P_{0},\cdots,P_{m_1}$ be a Clifford system on
${\mathbb R}^{2l}$, which are orthogonal symmetric operators on
${\mathbb R}^{2l}$ satisfying
$$
P_{i}P_{j}+P_{j}P_{i}=2\delta_{ij}I,\;\;i,j=0,\cdots,m.
$$
The ${\rm 4}$th degree homogeneous
polynomial
$$
F(x)=|x|^{4}-2\sum_{i=0}^{m}(<P_{i}(x),x>)^2
$$
is the Cartan-M\"{u}nzner polynomial. The two multiplicities of the OT-FKM type
are $m$ and $k\delta(m)-1$ for any $k=1,2,3,\cdots$, where $\delta(m)$ is the
dimension of an irreducible module of the Clifford algebra $C_{m-1}$
($l=k\delta(m)$). Stolz~\cite{St}
showed that these multiplicity pairs and $\{2,2\}$ and $\{4,5\}$
are exactly the possible multiplicities of isoparametric hypersurfaces
with four principal curvatures in the sphere.

The recent study of $n$-Sasakian manifolds~\cite{De}, Hamiltonian stability
of the Gauss images
of isoparametric hypersurfaces
in complex hyperquadrics as Lagrangian submanifolds~\cite{MO},~\cite{MO1}, isoparametric functions on
exotic spheres~\cite{GT}, and the realization
of the Cartan-M\"{u}nzner polynomial of an isoparametric hypersurface with
four principal curvatures as the moment map of a Spin-action on the ambient Euclidean
space, regarded as a cotangent bundle with the standard symplectic
structure~\cite{Fu},~\cite{Mi}, represent several new directions in
the study of such hypersurfaces.

Through~\cite{CCJ} (see also~\cite{Ch},~\cite{Ch1}) and~\cite{Chiq}
it has been clear by now that isoparametric hypersurfaces with four principal
curvatures and multiplicities $\{m_1,m_2\},m_1\leq m_2,$ fall into two
categories. Namely, the general category where $m_2\geq 2m_1-1,$ and the
anomalous category
where the multiplicities are $\{2,2\},\{3,4\},\{4,5\},\{6,9\}$ or $\{7,8\}$.
The former category
enjoys a rich connection with the theory of {\em reduced} ideals in commutative
algebra, and are exactly of OT-FKM type~\cite{CCJ},~\cite{Chiq}. The latter is peculiar, in that all known examples of
such hypersurfaces with multiplicities $\{3,4\},\{6,9\},$ or $\{7,8\}$
are of the OT-FKM type and have the property that incongruent isoparametric hypersurfaces with
the same multiplicity pair occur in the same ambient sphere, which is not
the case in
the former category; in contrast, those with multiplicities $\{2,2\}$ or $\{4,5\}$
can never be of OT-FKM type. The theory of reduced ideals breaks down in the anomalous
category. Yet, in~\cite{Chiq}, we were still able to utilize more
commutative algebra, in connection with the notion of Condition A introduced by
Ozeki and Takeuchi~\cite[I]{OT}, to prove that those hypersurfaces with
multiplicities $\{3,4\}$ are of OT-FKM type. This left unsettled only the
anomalous isoparametric
hypersurfaces with multiplicities $\{4,5\},\{6,9\}$ or $\{7,8\}$.

Of all known examples of isoparametric hypersurfaces
with four principal curvatures in the sphere, the homogeneous one ($=SU(5)/Spin(4)$)
with
multiplicities $\{4,5\}$ in $S^{19}$ is perhaps one of the most intriguing. First off
it stands alone by itself (together with the (classified) one with multiplicities $\{2,2\}$)
as it does not belong to the OT-FKM type.
More remarkably, through the work in~\cite{De}, one knows that there
is a contact CR structure of dimension 8 on its focal manifold of
dimension 14 in $S^{19}$, giving rise to the notion of 13-dimensional 5-Sasakian manifolds
fibered over ${\mathbb C}P^4$
that generalizes the 3-Sasakian ones. The 5-Sasakian manifold constructed
from the focal manifold carries a metric of positive sectional curvature~\cite{Be}.

Intuitively, it seems remote that the homogeneous example
$Spin(10)\cdot T^1/SU(4)\cdot T^1$
of multiplicities $\{6,9\}$ in $S^{31}$, which is of OT-FKM type, would share any common feature
with the above one of multiplicities $\{4,5\}$. We will, however, show through the classification
in this paper the striking resemblance between them.

In this paper, we will systematically employ the ideal theory,
in conjunction with
the geometry of isoparametric hypersurfaces to prove that
an isoparametric hypersurface with four principal curvatures and multiplicities
$\{4,5\}$ is the homogeneous one, and moreover, an isoparametric hypersurface with four
principal curvatures and multiplicities $\{6.9\}$ is either the homogeneous one mentioned above,
or the inhomogeneous one constructed by Ozeki and Takeuchi~\cite[I]{OT}.
Serre's criterion of normal varieties, whose technical side pertinent to our
situation we developed in~\cite{CCJ},~\cite{Chiq}, is instrumental.
It turns out the quaternion and octonion algebras also
play a fundamental role in the structures of these hypersurfaces.

The classification leaves open the only case when the multiplicity pair is
$\{7,8\}$ .

I would like to thank Josef Dorfmeister for many conversations, during our visit
to T\^{o}hoku University in the summer of 2010, to share the isoparametric
triple system approach he and Erhard Neher introduced~\cite{DN2}.

\section{Preliminaries}
\subsection{The basics} Let $M$ be an isoparametric hypersurfaces with four
principal curvatures in the sphere, and let $F$ be its Cartan-M\"{u}nzner
polynomial. To fix notation, we make the convention, by changing $F$ to $-F$
if necessary, that its two focal manifolds are $M_{+}:=F^{-1}(1)$ and
$M_{-}:=F^{-1}(-1)$ with respective codimensions $m_1+1\leq m_2+1$ in
the ambient sphere $S^{2(m_1+m_1)+1}$. The principal curvatures
of the shape operator $S_n$ of $M_{+}$ (vs. $M_{-}$) with respect to any unit
normal $n$ 
are $0,1$ and $-1$, whose multiplicities are, respectively,
$m_1,m_2$ and $m_2$ (vs. $m_2,m_1$ and $m_1$).

On the unit normal sphere bundle $UN_{+}$ of $M_{+}$, let $(x,n_0)\in UN_{+}$
be points in a small open set; here $x\in M_{+}$ and $n_0$ is normal to the tangents
of $M_{+}$ at $x$. We define a smooth orthonormal frame
$n_a,e_p,e_\alpha,e_\mu$, where $1\leq a,p\leq m_{1}$ and $1\leq\alpha,\mu\leq m_2$,
in such a way that $n_a$ are tangent to
the unit normal sphere at $n_0$, and $e_p,e_\alpha$
and $e_\mu$, respectively, are basis vectors of the eigenspaces $E_0,E_1$ and $E_{-1}$
of the shape operator $S_{n_0}$. 

\begin{convention} We will sometimes also use $b,q,\beta$ and $\nu$ in
place of
$a,p,\alpha$ and $\mu$, respectively. Henceforth, $a,p,\alpha,\mu$ are specifically
reserved for indexing the indicated normal and tangential subspaces.
\end{convention}

Each of the frame vector can be regarded as
a smooth function from $UN_{+}$ to ${\mathbb R}^{2(m_1+m_2)}$.
We have~\cite[p 14]{CCJ}, in Einstein summation convention,

\begin{eqnarray}\label{diff}
\aligned
dx= \omega^p e_p+\omega^\alpha e_\alpha+\omega^\mu e_\mu,&\qquad
dn_0=\omega^a n_a-\omega^\alpha e_\alpha+\omega^\mu e_\mu\\
dn_a=-\omega^a n_0+\theta_a^t e_t,&\qquad de_p=-\omega^p x+\theta_p^t e_t\\
de_\alpha=-\omega^\alpha x+\omega^\alpha n_0+\theta_\alpha^t e_t,&\qquad
de_\mu=-\omega^\mu x-\omega^\mu n_0+\theta_\mu^t e_t
\endaligned
\end{eqnarray}
where the index $t$ runs through the $p,\alpha$ and $\mu$ ranges, and

\begin{eqnarray}\label{theta}
\aligned
\theta_a^p=-S_{p\alpha}^a\omega^\alpha-S_{p\mu}^a\omega^\mu,
&\qquad \theta_a^\alpha=-S_{p\alpha}^a\omega^p-S_{\alpha\mu}^a\omega^\mu\\
\theta_p^\alpha=-S_{p\alpha}^a\omega^a-S_{\alpha\mu}^p\omega^\mu,
&\qquad \theta_a^\mu=-S_{p\mu}^a\omega^p-S_{\alpha\mu}^a\omega^\alpha\\
\theta_p^\mu=S_{p\mu}^a\omega^a+S_{\alpha\mu}^p\omega^\alpha,
&\qquad \theta_\alpha^\mu=(S_{\alpha\mu}^a/2)\omega^a+(S_{\alpha\mu}^p/2)\omega^p
\endaligned
\end{eqnarray}
where $S^a_{ij}:=<S(e_i,e_j),n_a>$ are the components of the second fundamental
form $S$ of $M_{+}$ at $x$, and $S^p_{\alpha\mu}$ are the $\alpha\mu$-components of $S$
at the "mirror" point $n_0\in M_{+}$ where the normal $x,e_p,1\leq p\leq m_1,$
and the tangent $n_a,1\leq a\leq 4,e_\alpha,e_\mu,1\leq\alpha,\mu\leq 5,$
form an adapted frame. Knowing $S$ at $x$ does not necessarily know $S$ at $n_0$.
This is fundamentally the reason the classification of isoparametric hypersurfaces
can be rather entangling. In any event, there are two identities connecting
$S^a_{\alpha\mu}$ and $S^p_{\alpha\mu}$ as follows~\cite[p 16]{CCJ}.

\begin{eqnarray}\label{mirror}
\aligned
&\sum_a S^a_{p\alpha}S^a_{q\beta}+\sum_a S^a_{q\alpha}S^a_{p\beta}\\
&+1/2\sum_\mu (S^p_{\alpha\mu}S^q_{\beta\mu}+S^q_{\alpha\mu}S^p_{\beta\mu})
&=\delta_{pq}\delta_{\alpha\beta}.
\endaligned
\end{eqnarray}
The other is entirely symmetric obtained by interchanging the $\alpha$ and $\mu$
ranges.

The third fundamental form of $M_{+}$ is the symmetric tensor
$$
q(X,Y,Z):=(\nabla^{\perp}_X S)(Y,Z)/3
$$
where $\nabla^{\perp}$ is the normal connection. Write $p_a(X,Y):=<S(X,Y),n_a>$
and $q^a(X,Y,Z)=<q(X,Y,Z),n_a>,0\leq a\leq m_1$.
The Cartan-M\"{u}nzner polynomial $F$ is related to $p_a$ and $q^a$ by
the expansion formula of Ozeki and Takeuchi~\cite[I, p 523]{OT}

\begin{eqnarray}\label{OT}
\aligned
&F(tx+y+w)=t^4+(2|y|^2-6|w|^2)t^2+8(\sum_{i=0}^{m}p_{i}w_{i})t\\
&+|y|^4-6|y|^2|w|^2+|w|^4-2\sum_{i=0}^{m}p_{i}^2-8\sum_{i=0}^{m}q^{i}w_{i}
\\
&+2\sum_{i,j=0}^{m}<\nabla p_{i},\nabla p_{j}>w_{i}w_{j}
\endaligned
\end{eqnarray}
where $w:=\sum_{i=0}^{m_1} w_i n_i$, $y$ is tangential to $M_{+}$ at $x$,
$p_i:=p_i(y,y)$ and $q^i:=q^i(y,y,y)$. Note that our definition of $q^i$
differs from that of Ozeki and Takeuchi~\cite[I]{OT} by a sign.

\begin{lemma}\label{Q0} $q^0(y,y,y)=-\sum_{p\alpha\mu}S^p_{\alpha\mu}X_\alpha Y_\mu Z_p$,
where $y=\sum_\alpha X_\alpha e_\alpha+\sum_\mu Y_\mu e_\mu+\sum_p Z_p e_p.$
\end{lemma}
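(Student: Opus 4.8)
The plan is to extract the formula for $q^0$ directly from the Ozeki--Takeuchi expansion~\eqref{OT} by isolating the appropriate term. Recall that $q^i := q^i(y,y,y)$ appears in~\eqref{OT} only through the summand $-8\sum_i q^i w_i$, which is the unique term that is simultaneously linear in $w$ and cubic in $y$. So the strategy is: pick the special normal direction $w = n_0$ (i.e. set $w_0 = 1$ and $w_i = 0$ for $i \geq 1$), expand $F(tx + y + n_0)$ using the intrinsic geometry encoded in~\eqref{diff} and~\eqref{theta}, collect the coefficient of the monomial that is degree $0$ in $t$ and degree $3$ in $y$, and match it against $-8 q^0(y,y,y)$ coming from~\eqref{OT}.

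First I would set up coordinates: with $y = \sum_\alpha X_\alpha e_\alpha + \sum_\mu Y_\mu e_\mu + \sum_p Z_p e_p$ tangent to $M_+$ at $x$, and $n_0$ the chosen normal, I need the Taylor expansion of $F$ at $x$ in the directions $y$ and $n_0$. Since $x \in M_+ = F^{-1}(1)$, the point $x$ is a critical point of $f = F|_S$ pushed to the focal manifold, and the Hessian-type data of $F$ along $M_+$ is governed by the shape operators $S_{n_0}$ (eigenvalues $0, 1, -1$ on $E_0, E_1, E_{-1}$) and the normal derivative of the second fundamental form, which is exactly $3q$. Concretely, one writes $F(tx + y + n_0)$ as a polynomial in $t$ and records how the frame~\eqref{diff} transports $x$ and $n_0$; the cubic-in-$y$, constant-in-$t$ part of $F$ is built from $p_0(y,y)$, its covariant derivative, and the curvature terms. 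The key geometric input is that the mixed third-order term pairing the $E_1$-component ($X_\alpha$), the $E_{-1}$-component ($Y_\mu$), and the $E_0$-component ($Z_p$) of $y$ is governed precisely by $S^p_{\alpha\mu}$ — the "mirror" components appearing in~\eqref{theta} and~\eqref{mirror} — because moving in the normal direction $n_0$ sends the point $x$ toward its mirror point and the shape operator there has $\alpha\mu$-components $S^p_{\alpha\mu}$.

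Comparing coefficients then yields $-8 q^0(y,y,y) = 8 \sum_{p\alpha\mu} S^p_{\alpha\mu} X_\alpha Y_\mu Z_p$ up to the combinatorial factor that accounts for the symmetrization of $q$ as a cubic form, giving the claimed identity $q^0(y,y,y) = -\sum_{p\alpha\mu} S^p_{\alpha\mu} X_\alpha Y_\mu Z_p$. The main obstacle I anticipate is the bookkeeping: one must carefully separate the genuinely cubic contribution to $q^0$ from the lower-order pieces (the quadratic $p_0$ and the quartic $|y|^4$-type terms in~\eqref{OT}), keep the sign convention straight (the excerpt explicitly flags that $q^i$ differs from Ozeki--Takeuchi by a sign), and correctly identify which index range — the $E_0$ directions $Z_p$ — gets singled out by the choice $w = n_0$. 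This is essentially the computation carried out in~\cite[I]{OT} adapted to the present sign conventions and notation, so the verification is routine once the frame~\eqref{diff} is substituted into~\eqref{OT}, but it does require attention to detail rather than a new idea.
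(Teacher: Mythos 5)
Your proposal follows the paper's own route: both read $q^0$ off as the coefficient of $w_0$ in the expansion~\eqref{OT} and evaluate that coefficient using the fact that $n_0$ is itself a point of $M_{+}$ at which, by~\eqref{diff}, the normal space is ${\mathbb R}x\oplus E_0$ and the $\pm 1$-eigenspaces of $S_x$ are $E_{1}$ and $E_{-1}$, so the mixed cubic term is governed by the mirror components $S^p_{\alpha\mu}$. The one step you leave implicit --- actually computing that coefficient --- is done most cleanly by applying~\eqref{OT} a second time based at $n_0$, where $w_0$ plays the role of $t$ and the $Z_p$ become normal coordinates, so the answer is read off from the linear term $8\sum_i p_i w_i$ of that second expansion; this also pins down the combinatorial factor you left undetermined.
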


\begin{proof} One uses~\eqref{OT} and observes that at $n_0\in M_{+}$, by~\eqref{diff},
the normal space is ${\mathbb R}x\oplus E_0$, the $0$-eigenspace of the shape operator
$S_x$ is spanned by $n_1,\cdots,n_{m_1}$, and the $\pm 1$-eigenspaces of $S_x$
are identical with $E_{1}$ and $E_{-1}$, respectively.
\end{proof}
We remark that the symmetric matrices $S_a$ of the components
$p_a,0\leq a\leq m_1,$ relative to $E_{1},E_{-1}$ and $E_0$ 
are
\begin{equation}\label{0a}
S_0=\begin{pmatrix}Id&0&0\\0&-Id&0\\0&0&0\end{pmatrix},
S_a=\begin{pmatrix}0&A_a&B_a\\A_a^{tr}&0&C_a\\B_a^{tr}&C_a^{tr}
&0\end{pmatrix},1\leq a\leq m_1,
\end{equation}
where $A_a:E_{-1}\rightarrow E_{1}$,
$B_a:E_0\rightarrow E_{1}$ and $C_a:E_0\rightarrow E_{-1}$.

\subsection{The duality between $M_{+}$ and $M_{-}$} Let $UN_{+}$
and $UN_{-}$ be respectively the unit normal bundles of $M_{+}$ and $M_{-}$.
The map
$$
(x,n_0)\rightarrow (x^*:=(x+n_0)/\sqrt{2},n_0^*:=(x-n_0)/\sqrt{2})
$$
is a diffeomorphism from $UN_{+}$ to $UN_{-}$. Finding $dx^*$ by~\eqref{diff},
we see that the normal space at $x^*$ is ${\mathbb R}n_0^*\oplus E_{+}$.
Finding $-dn_0^*$ by~\eqref{diff}, we obtain that $E_{1}^*$, the $+1$-eigenspace
of the shape operator $S_{n_0^*}$, is spanned by $n_1,\cdots,n_{m_1}$, $E_{-1}^*$,
the $-1$-eigenspace is $E_0$, and $E_0^*$, the $0$-eigenspace is $E_{-1}$.
We leave it to the reader as a simple exercise to verify the following duality property by
exploring~\eqref{diff} and~\eqref{theta} on both $M_{+}$ and $M_{-}$ at $x$ and $x^*$.

\begin{lemma}
Referring to~\eqref{0a}, let the counterpart matrices at $x^*$ and their blocks
be denoted by the same notation with an additional *.
Then
\begin{eqnarray}\label{duality}
\aligned
A_\alpha^*&=-\sqrt{2}\begin{pmatrix}S^a_{p\alpha}\end{pmatrix},1\leq\alpha\leq m_2,\\ 
B_\alpha^*&=-1/\sqrt{2}\begin{pmatrix}S^a_{\alpha\mu}\end{pmatrix},1\leq\alpha\leq m_2,\\
C_\alpha^*&=-1/\sqrt{2}\begin{pmatrix}S^p_{\alpha\mu}\end{pmatrix},1\leq\alpha\leq m_2,
\endaligned
\end{eqnarray}
where the upper scripts denote rows.
\end{lemma}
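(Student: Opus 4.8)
The plan is to fill in the moving-frame computation the text leaves as an exercise, by transporting the adapted frame of $M_-$ at $x^*$ back to $UN_+$ through the diffeomorphism $(x,n_0)\mapsto(x^*,n_0^*)$ and differentiating with~\eqref{diff},~\eqref{theta}. From the expansions of $dx^*$ and $-dn_0^*$ already recorded in the text, the adapted data for $M_-$ at $x^*$ read off as follows: the position vector is $x^*$; the unit normals are $n_0^*$ together with a basis $e_\alpha$, $1\le\alpha\le m_2$, of $E_1$; and the tangent space decomposes as $E_1^*\oplus E_{-1}^*\oplus E_0^*=\mathrm{span}(n_1,\dots,n_{m_1})\oplus E_0\oplus E_{-1}$, these being the $+1$-, $-1$- and $0$-eigenspaces of $S_{n_0^*}$. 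Hence $n_0^*$ and $\{e_\alpha\}$ play the roles of $n_0$ and $\{n_a\}$ in~\eqref{0a}, and $A_\alpha^*,B_\alpha^*,C_\alpha^*$ are the $E_{-1}^*\to E_1^*$, $E_0^*\to E_1^*$ and $E_0^*\to E_{-1}^*$ blocks of the symmetric form $p_\alpha^*(X,Y)=\langle dY(X),e_\alpha\rangle$ on $T_{x^*}M_-$, each frame vector being regarded as a function on $UN_+$ valued in the ambient Euclidean space. As a consistency check that $n_0^*$ really is the base normal, $\langle dn_a,n_0^*\rangle=\langle dn_a,(x-n_0)/\sqrt{2}\rangle=\omega^a/\sqrt{2}$ by~\eqref{diff}, so $S_{n_0^*}=Id$ on $E_1^*$, matching the top block of $S_0^*$.

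First I would fix the orthonormal coframe of $M_-$ at $x^*$. Expanding $dx^*=(dx+dn_0)/\sqrt{2}$ by~\eqref{diff} gives $dx^*=(\omega^a/\sqrt{2})\,n_a+(\omega^p/\sqrt{2})\,e_p+\sqrt{2}\,\omega^\mu e_\mu$, so the coframe dual to the orthonormal tangent frame $\{n_a,e_p,e_\mu\}$ of $M_-$ is $\tilde\omega^a=\omega^a/\sqrt{2}$, $\tilde\omega^p=\omega^p/\sqrt{2}$, $\tilde\omega^\mu=\sqrt{2}\,\omega^\mu$. This one observation is what generates the factors $\sqrt{2}$ and $1/\sqrt{2}$ appearing in~\eqref{duality}.

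Then I would read off the three blocks. By~\eqref{diff} and~\eqref{theta}, $\langle de_p,e_\alpha\rangle=\theta_p^\alpha=-S^a_{p\alpha}\omega^a-S^p_{\alpha\mu}\omega^\mu$ and $\langle de_\mu,e_\alpha\rangle=\theta_\mu^\alpha=-\theta_\alpha^\mu=-(S^a_{\alpha\mu}/2)\,\omega^a-(S^p_{\alpha\mu}/2)\,\omega^p$. Substituting $\omega^a=\sqrt{2}\,\tilde\omega^a$, $\omega^p=\sqrt{2}\,\tilde\omega^p$, $\omega^\mu=\tilde\omega^\mu/\sqrt{2}$ and evaluating $p_\alpha^*$ on the pairs $(n_a,e_p)$, $(n_a,e_\mu)$, $(e_p,e_\mu)$ gives $(A_\alpha^*)_{ap}=-\sqrt{2}\,S^a_{p\alpha}$, $(B_\alpha^*)_{a\mu}=-(1/\sqrt{2})\,S^a_{\alpha\mu}$ and $(C_\alpha^*)_{p\mu}=-(1/\sqrt{2})\,S^p_{\alpha\mu}$, which is precisely~\eqref{duality} with the superscript as the row index in each matrix. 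The computation is routine; the only delicate point is the bookkeeping — the coframe normalizations above, the antisymmetry $\theta_i^j=-\theta_j^i$ of the connection forms, and the row/column convention of~\eqref{0a} — which must be handled carefully so that the constants and transposes come out exactly as stated.
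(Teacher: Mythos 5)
Your computation is correct and is exactly the argument the paper intends — the paper gives no proof, stating only that the identity follows "by exploring~\eqref{diff} and~\eqref{theta} on both $M_+$ and $M_-$ at $x$ and $x^*$," and you carry that out faithfully: the rescaled coframe $\tilde\omega^a=\omega^a/\sqrt{2}$, $\tilde\omega^p=\omega^p/\sqrt{2}$, $\tilde\omega^\mu=\sqrt{2}\,\omega^\mu$ read off from $dx^*$ is the whole point, and your identification of $E_1^*,E_{-1}^*,E_0^*$ and the resulting evaluation of $\theta_p^\alpha$ and $\theta_\mu^\alpha=-\theta_\alpha^\mu$ on the unit tangents produce the stated blocks with the right constants and index conventions.
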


\subsection{The homogeneous example of multiplicities $\{4,5\}$}\label{scn1} Consider the
complex Lie algebra $so(5,{\mathbb C})$. The unitary group $U(5)$ acts
on it by
$$
g\cdot Z=\overline{g}Zg^{-1}
$$
for $g\in U(5)$ and $Z\in so(5,{\mathbb C})$. The principal orbits of
the action is the homogeneous 1-parameter family of isoparametric
hypersurfaces with multiplicities $(m_1,m_2)=(4,5)$.
Let the $(i,j)$-entry of $Z$ be denoted by
$a_{ij}$,
and let $a_{ij}=x_{ij}+\sqrt{-1}y_{ij}$ in which $x_{ij}$ and
$y_{ij}$ are real.
The Euclidean space is $so(5,{\mathbb C})$ coordinatized by $x_{ij}$ and
$y_{ij}$, and the Cartan-M\"{u}nzner polynomial is~\cite[II, p 27]{OT}
$$
F(Z)=-5/4\sum_{i} |Z_i|^4+3/2\sum_{i<j}|Z_i|^2|Z_j|^2-4\sum_{i<j}
|<Z_i,Z_j>|^2,
$$
where $Z_1,\cdots,Z_5$ are the row vectors of $Z$. It is readily seen that
the point $x$ with coordinates $x_{12}=x_{34}=1/\sqrt{2}$ and zero otherwise
satisfies $F(x)=1$, so that $x\in M_{+}=SU(5)/Sp(2)$. Let us introduce new coordinates
\begin{eqnarray}\nonumber
\aligned
x_{12}:=(t+w_0)/\sqrt{2},&\qquad x_{34}:=(t-w_0)/\sqrt{2},\\
x_{13}:=(w_3-z_4)/\sqrt{2},&\qquad x_{24}:=(w_3+z_4)/\sqrt{2},\\
y_{13}:=(-z_3-w_4)/\sqrt{2},&\qquad y_{24}:=(-z_3+w_4)/\sqrt{2},\\
x_{14}:=(z_2-w_1)/\sqrt{2},&\qquad x_{23}:=(z_2+w_1)/\sqrt{2},\\
y_{14}:=(w_2+z_1)/\sqrt{2},&\qquad y_{23}:=(w_2-z_1)/\sqrt{2}.
\endaligned
\end{eqnarray}
Then $w_0,\cdots,w_4$ are the normal coordinates, $z_1,\cdots,z_4$
the $E_0$-coordinates, and
\begin{eqnarray}\nonumber
\aligned
x_1:=x_{35},x_2:=y_{35},x_3&:=x_{45},x_4:=y_{45},x_5:=y_{34},\\
y_1:=x_{15},y_2:=y_{35},y_3&:=x_{25},y_4:=y_{25},y_5:=y_{12}
\endaligned
\end{eqnarray}
are the five $E_{1}$ and five $E_{-1}$ coordinates, in order.
In fact, the components of the second fundamental form of $M_{+}$
at $x$ are, by~\eqref{OT},

\begin{eqnarray}\label{2nd}
\aligned
p_0&=(x_1)^2+\cdots+(x_5)^2-(y_1)^2-\cdots-(y_5)^2,\\
p_1&=2(x_1y_1+x_2y_2+\cdots+x_4y_4)+\sqrt{2}(x_5+y_5)z_1,\\
p_2&=2(x_2y_1-x_1y_2)+2(x_3y_4-x_4y_3)+\sqrt{2}(x_5+y_5)z_2,\\
p_3&=2(x_3y_1-x_1y_3)+2(x_4y_2-x_2y_4)+\sqrt{2}(x_5+y_5)z_3,\\
p_4&=2(x_2y_3-x_3y_2)+2(x_4y_1-x_1y_4)+\sqrt{2}(x_5+y_5)z_4.
\endaligned
\end{eqnarray}
Note that the 5-by-5 matrices $A_i$ of $p_i, 1\leq i\leq 4,$ given in~\eqref{0a} are
\begin{eqnarray}\label{A1}
\aligned
A_1:=\begin{pmatrix}I&0&0\\0&I&0\\0&0&0\end{pmatrix},&\qquad
A_2:=\begin{pmatrix}J&0&0\\0&-J&0\\0&0&0\end{pmatrix},\\
A_3:=\begin{pmatrix}0&-I&0\\I&0&0\\0&0&0\end{pmatrix},&\qquad
A_4:=\begin{pmatrix}0&J&0\\J&0&0\\0&0&0\end{pmatrix},
\endaligned
\end{eqnarray}
where $I$ is the 2-by-2 identity matrix and $J$ is the 2-by-2 matrix
\begin{equation}\label{j}
J:=\begin{pmatrix}0&-1\\1&0\end{pmatrix}. 
\end{equation}
It is readily checked that the upper 4-by-4 blocks of $A_1,\cdots,A_4$,
still denoted by $A_1,\cdots.A_4$ for notational convenience, satisfy
$$
A_jA_k+A_kA_j=-2\delta_{jk}I
$$
with
\begin{equation}\label{quaternion}
A_2A_3=-A_4.
\end{equation}
Note that $A_1,\cdots,A_4$ are exactly the matrix representations of
the multiplications by $1,i,j,k$, respectively, on the right over ${\mathbb H}$.
The 5-by-4 matrices $B_i=C_i$ of $p_i,1\leq i\leq 4$, given in~\eqref{0a} are
\begin{eqnarray}\label{rank}
\aligned
B_1:=\begin{pmatrix}0&0&0&0\\1/\sqrt{2}&0&0&0\end{pmatrix},&\qquad
B_2:=\begin{pmatrix}0&0&0&0\\0&1\sqrt{2}&0&0\end{pmatrix},\\
B_3:=\begin{pmatrix}0&0&0&0\\0&0&1\sqrt{2}&0\end{pmatrix},&\qquad
B_4:=\begin{pmatrix}0&0&0&0\\0&0&0&1/\sqrt{2}\end{pmatrix},
\endaligned
\end{eqnarray}
where the first zero row in each matrix is of size 4-by-4.

Note that it follows from~\eqref{rank} that all nontrivial linear combinations
of $B_1,\cdots,B_4$ are of rank 1, which will play a decisive role later.

A calculation with the expansion formula~\eqref{OT} gives the components
of the third fundamental form $\tilde{q}$ of the homogeneous example. We will
only display $\tilde{q}^0$ for later purposes.

\begin{eqnarray}\label{q0}
\aligned
\tilde{q}^0&=-2z_4(x_1y_3+x_3y_1+x_2y_4+x_4y_2)\\
&-2z_3(-x_1y_4-x_4y_1+x_2y_3+x_3y_2)\\
&-2z_2(x_1y_1+x_2y_2-x_3y_3-x_4y_4)\\
&-2z_1(x_1y_2-x_2y_1+x_3y_4-x_4y_3)
\endaligned
\end{eqnarray}

%%\begin{eqnarray}\nonumber
%%\aligned
%%\tilde{q}^1&=z_4(-2x_1x_3-2x_2x_4+2y_1y_3+2y_2y_4)\\
%%&+z_3(2x_1x_4-2x_2x_3-2y_1y_4+2y_2y_3)\\
%%&+z_2(-x_1^2-x_2^2+x_3^2+x_4^2+y_1^2+y_2^2-y_3^2-y_4^2)\\
%%&+\sqrt{2}x_5(x_2y_1-x_1y_2+x_4y_3-x_3y_4)\\
%%&+\sqrt{2}y_5(x_1y_2-x_2y_1+x_3y_4-x_4y_3)
%%\endaligned
%%\end{eqnarray}

%%\begin{eqnarray}\nonumber
%%\aligned
%%\tilde{q}^2&=z_4(2x_1x_4-2x_2x_3+2y_1y_4-2y_2y_3)\\
%%&+z_3(2x_1x_3+2x_2x_4+2y_1y_3+2y_2y_4)\\
%%&+z_1(x_1^2+x_2^2-x_3^2-x_4^2-y_1^2-y_2^2+y_3^2+y_4^2)\\
%%&+\sqrt{2}x_5(-x_1y_1-x_2y_2+x_3y_3+x_4y_4)\\
%%&+\sqrt{2}y_5(x_1y_1+x_2y_2-x_3y_3-x_4y_4)
%%\endaligned
%%\end{eqnarray}

%%\begin{eqnarray}\nonumber
%%\aligned
%%\tilde{q}^3&=z_4(x_1^2+x_2^2-x_3^2-x_4^2+y_1^2+y_2^2-y_3^2-y_4^2)\\
%%&+z_2(-2x_1x_3-2x_2x_4-2y_1y_3-2y_2y_4)\\
%%&+z_1(-2x_1x_4+2x_2x_3+2y_1y_4-2y_2y_3)\\
%%&+\sqrt{2}x_5(-x_2y_3-x_3y_2+x_1y_4+x_4y_1)\\
%%&+\sqrt{2}y_5(-x_1y_4-x_4y_1+x_2y_3+x_3y_2)
%%\endaligned
%%\end{eqnarray}

%%\begin{eqnarray}\nonumber                                     
%%\aligned
%%\tilde{q}^4&=z_3(-x_1^2-x_2^2+x_3^2+x_4^2-y_1^2-y_2^2+y_3^2+y_4^2)\\
%%&+z_2(-2x_1x_4+2x_2x_3-2y_1y_4+2y_2y_3)\\
%%&+z_1(2x_1x_3+2x_2x_4-2y_1y_3-2y_2y_4)\\
%%&+\sqrt{2}x_5(-x_2y_4-x_3y_1-x_1y_3-x_4y_2)\\
%%&+\sqrt{2}y_5(x_1y_3+x_3y_1+x_2y_4+x_4y_2)
%%\endaligned
%%\end{eqnarray}

\subsection{The homogeneous example of multiplicities $\{6,9\}$}\label{SUB}
This is the example of OT-FKM type with multiplicity pair $(m_1,m_2)=(6,9)$
whose Clifford action
is on $M_{-}$ of codimension $9+1=10$ in $S^{31}$, given as follows.

Let $\check{J}_1,\cdots,\check{J}_8$ be the unique (up to equivalence) irreducible representation of the
(anti-symmetric)
Clifford algebra $C_8$ on ${\mathbb R}^{16}$.
Set
\begin{eqnarray}\nonumber
\aligned
P_0&:(c,d)\mapsto (c,-d),\\
P_1&:(c,d)\mapsto (d,c),\\
P_{1+i}&:(c,d)\mapsto (\check{J}_i(d),-\check{J}_i(c)),1\leq i\leq 8,
\endaligned
\end{eqnarray}
over ${\mathbb R}^{32}={\mathbb R}^{16}\oplus{\mathbb R}^{16}.$
$P_0,P_1,\cdots,P_9$ form a representation of the (symmetric) Clifford
algebra $C_{10}'$
on ${\mathbb R}^{32}$.

We know that $M_{-}$ with the Clifford action on it can be realized as the
Clifford-Stiefel manifold~\cite{FKM}. Namely,
\begin{eqnarray}\nonumber
\aligned
M_{-}=\{&(\zeta,\eta)\in S^{31}\subset{\mathbb R}^{16}\times{\mathbb R}^{16}:\\
&|\zeta|=|\eta|=1/\sqrt{2},\zeta\perp\eta,\check{J}_i(\zeta)\perp\eta,i=1,\cdots,8\}.
\endaligned
\end{eqnarray}

At $(\zeta,\eta)\in M_{-}$, the normal space is
$$
N=\text{span}<f_0:=P_0((\zeta,\eta)),\cdots,f_9:=P_9((\zeta,\eta))>.
$$
$E_0$, the 0-eigenspace of the shape operator $S_0:=S_{f_0}$, is
$$
E_0=\text{span}<g_1:=P_1P_0((\zeta,\eta)),\cdots,g_9:=P_{9}P_0((\zeta,\eta))>.
$$
$E_{\pm}$, the $\pm 1$-eigenspaces of $S_0$, are
$$
E_{\pm}:=\{X:P_0(X)=\mp X,X\perp N\}.
$$
Since $E_{+}$ (vs. $E_{-}$) consists of $(0,d)\in{\mathbb R}^{32}$ (vs.
$(e,0)\in{\mathbb R}^{32}$), we obtain
\begin{eqnarray}\label{e+}
\aligned
E_{+}&:=\{(0,d): d\perp \zeta,d\perp \eta,d\perp \check{J}_i(\zeta),\forall i\},\\
E_{-}&:=\{(e,0): e\perp \zeta,e\perp \eta,e\perp \check{J}_i(\eta),\forall i\}.
\endaligned
\end{eqnarray}
The second fundamental form $S_a:=S_{f_a}$ at $(\zeta,\eta)$ is
$$
S_{a}(X,Y)=-<P_a(X),Y>,
$$
The representation $\check{J}_1,\cdots,
\check{J}_8$ can be constructed out of the octonion algebra as follows.
Let $e_1,e_2,\cdots,e_8$ be the standard basis of the octonion algebra
${\mathbb O}$ with $e_1$ the multiplicative unit. Let $J_1,J_2,\cdots,J_7$
be the matrix representations of the octonion multiplications by
$e_2,e_2,\cdots,e_8$ on the right over ${\mathbb O}$. Then

\begin{equation}
\check{J}_i=\begin{pmatrix}J_i&0\\0&-J_i\end{pmatrix},1\leq i\leq 7,\qquad
\check{J}_8=\begin{pmatrix}0&I\\-I&0\end{pmatrix}.
\end{equation}

We may set
$$
\eta=(0,e_1/\sqrt{2}),\qquad \zeta=(e_2/\sqrt{2},0)
$$
(in fact any purely
imaginary $e$ in place of $e_2$ is fine). Then it is easily checked that
$(\zeta,\eta)\in M_{-}$. Moreover,
\begin{eqnarray}\nonumber
\aligned
E_{+}&=\{(0,d)\in{\mathbb R}^{16}\times{\mathbb R}^{16}:d=(0,\alpha)\in{\mathbb R}^8
\times{\mathbb R}^8,\alpha\perp e_1,e_2\},\\
E_{-}&=\{(e,0)\in{\mathbb R}^{16}\times{\mathbb R}^{16}:e=(\beta,0)\in{\mathbb R}^8
\times{\mathbb R}^8,\beta\perp e_1,e_2\}.
\endaligned
\end{eqnarray}
For $h_\alpha=(0,e_\alpha)\in E_{+}$ and $k_\mu=(e_\mu,0)\in E_{-},3\leq\alpha,\mu\leq 8,$ we calculate to see
\begin{eqnarray}\label{P}
\aligned
&<P_1(h_\alpha),k_\mu>=0,\qquad <P_9(h_\alpha),k_\mu>=-<e_\alpha,e_\mu>,\\
&\quad <P_{1+i}(h_\alpha),k_\mu>=0,\quad 1\leq i\leq 7.
\endaligned
\end{eqnarray}

The point is that what we are after is the second fundamental form of $M_{+}$
of codimension $6+1=7$ in $S^{31}$. Observe that
$$
((e_2,0),0)=((\zeta,\eta)+P_0((\zeta,\eta)))/\sqrt{2}\in M_{+},
$$
where by~\eqref{duality} the six 9-by-9 matrices $A_3,\cdots,A_8$ (to be compatible
with the octonion setup, we do not denote them by $A_1,A_2,\cdots,A_6$), similar to the ones in~\eqref{A1},
are given by, for $3\leq\alpha\leq 8,1\leq a,p\leq 9$,
\begin{equation}\label{A2}
A_\alpha=\begin{pmatrix}\sqrt{2}<P_a(h_\alpha),g_p>\end{pmatrix},
\end{equation}
where $A_{\alpha}$ is skew-symmetric with 
%%the $(1,j)$-entry
%%$=<e_\alpha,e_ j>$ for $2\leq j\leq 8$, 
the $(i,j)$-entry
$=<e_\alpha,e_je_i>$ for $1\leq i<j\leq 8$, and the nineth row and column
$=0$. That is, the upper 8-by-8 block of $A_\alpha$ is the matrix representation of the multiplication of $-e_\alpha$ on the right
over ${\mathbb O}$. Explicitly,

\begin{eqnarray}\nonumber
\aligned
A_3&=\begin{pmatrix}0&I&0&0&0\\-I&0&0&0&0\\0&0&0&-I&0\\0&0&I&0&0\\0&0&0&0&0\end{pmatrix},\quad
A_4=\begin{pmatrix}0&-J&0&0&0\\-J&0&0&0&0\\0&0&0&J&0\\0&0&J&0&0\\0&0&0&0&0\end{pmatrix},\\
A_5&=\begin{pmatrix}0&0&I&0&0\\0&0&0&I&0\\-I&0&0&0&0\\0&-I&0&0&0\\0&0&0&0&0\end{pmatrix},\quad
A_6=\begin{pmatrix}0&0&-J&0&0\\0&0&0&-J&0\\-J&0&0&0&0\\0&-J&0&0&0\\0&0&0&0&0\end{pmatrix},\\
A_7&=\begin{pmatrix}0&0&0&K&0\\0&0&-K&0&0\\0&K&0&0&0\\-K&0&0&0&0\\0&0&0&0&0\end{pmatrix},\quad
A_8=\begin{pmatrix}0&0&0&L&0\\0&0&-L&0&0\\0&L&0&0&0\\-L&0&0&0&0\\0&0&0&0&0\end{pmatrix},
\endaligned
\end{eqnarray}
where $J$ is given in~\eqref{j} and
\begin{equation}\label{k}
K:=\begin{pmatrix} 1&0\\0&-1\end{pmatrix},\qquad L:=\begin{pmatrix} 0&1\\1&0\end{pmatrix}.
\end{equation}
The upper 8-by-8 blocks of $A_3,\cdots,A_8$, still denoted by the same symbols
for notational convenience, satisfy
$$
A_\alpha A_\beta+A_\beta A_\alpha=-2\delta_{\alpha\beta}I;
$$
this is the unique (up to equivalence) Clifford representation of
$C_6$ on ${\mathbb R}^8$. We will employ later the five matrices
\begin{equation}\label{cliff}
\alpha_j=-A_3A_j,4\leq j\leq 8,
\end{equation}
which generate the unique (up to equivalence) representation of $C_5$ on ${\mathbb R}^8$.
Note that $I,\alpha_4,\cdots\alpha_8$ are compatible with~\eqref{A1}. Meanwhile,
$B_3,\cdots,B_8$, similar to the ones
in~\eqref{rank}, are given, in view of~\eqref{duality}, by
\begin{equation}\label{RANK}
B_\alpha=\begin{pmatrix}<P_a(h_\alpha),k_\mu>/\sqrt{2}\end{pmatrix},
1\leq a\leq 9,3\leq\alpha,\mu\leq 8,
\end{equation}
whose $(9,\alpha)$-entry is $1/\sqrt{2}$ and is zero elsewhere, in complete agreement
with~\eqref{rank}.

We remark that the third fundamental form of $M_{-}$ is
$$
<q(X,Y,Z),W>
={\mathcal C}\sum_{b=0}^{9}(<S_b(X,Y)P_b(Z),W>)/3,
$$
where ${\mathcal C}$ denotes the cyclic sum over $X,Y,Z$. In particular,
$$
q^0(h_\alpha,k_\mu,g_p)
={\mathcal C}\sum_{b=0}^9<S_b(h_\alpha,k_\mu)P_b(g_p),f_0>
=S_{b=p}(h_\alpha,k_\mu),
$$
which implies, by Lemma~\ref{Q0}, $B_\alpha=C_\alpha$ for $3\leq\alpha\leq 8$, as in the $(4,5)$ case.

\section{Normal varieties and codimension 2 estimates} This section gives a brief
account of the
background commutative algebra and algebraic geometry needed
for the subsequent development. Though we
can proceed in an algebraic way as done in~\cite{Sh}, we choose to present it in
an analytic way as done in~\cite{Gu} for more geometric intuition.

Let $V$ be an affine variety in ${\mathbb C}^n$ defined by the zeros of $m+1$ polynomials
$p_0,p_1,\cdots,p_m$, and let ${\mathcal S}$ be its singular set. A function
$f$ is weakly holomorphic in an open set $O$ of $V$ if it is holomorphic on $O\setminus{\mathcal S}$
and is locally bounded in $O$. Passing to the limit as $O$ shrinks to a point $p$,
we can talk about the germs of weakly holomorphic functions at $p$. The variety
is said to be normal at $p$ if the germs of weakly holomorphic functions at $p$ coincide
with the germs of holomorphic functions at $p$. That is, the Riemann extension
theorem holds true in the germs of neighborhoods around $p$. $V$ is said to be
normal if it is normal at all its points.

If $V$ is normal, then its irreducible components are disconnected~\cite{Gu}; or else 
a constant function with different values on different local irreducible branches,
which is not even continuous, would give
rise to a weakly holomorphic function that could be extended to a holomorphic function,
a piece of absurdity. Each irreducible component is normal whose singularity set is of codimension $\geq 2$.
The key point to this is that if we realize an irreducible normal variety $X$ of dimension $l$ locally as a finite branched covering
$\pi:X\rightarrow {\mathbb C}^l$, then the local irreducibility of $X$ gives that the branch locus $B$ of $X$ and $\pi(B)$ are both of 
dimension $l-1$, and so the singular set ${\mathcal S}\pi(B)$ of $\pi(B)$ is of codimension at least 2 in ${\mathbb C}^l$ . 
Then observe that the singular set of $X$ is contained in the preimage of ${\mathcal S}\pi(B)$. 

 In particular,
if $V$ is normal and connected, then $V$ is irreducible with the singular set
of codimension $\geq 2$.

\begin{corollary} If $p_0,p_1,\cdots,p_m$ are homogeneous polynomials whose zeros
define a normal variety $V$. Then $V$ is irreducible and the singular set of $V$
is of codimension $\geq 2$.
\end{corollary}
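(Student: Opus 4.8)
The corollary follows almost immediately from the discussion preceding it, so the proof is a short observation rather than a substantial argument. The plan is to reduce the homogeneous case to the general fact, already established in the preceding paragraphs, that a normal variety is a disjoint union of its irreducible components, each of which is itself normal with singular set of codimension at least two.

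First I would invoke the cone structure: since $p_0,\dots,p_m$ are homogeneous, the variety $V=\{p_0=\cdots=p_m=0\}\subset{\mathbb C}^n$ is a cone with vertex at the origin, i.e. it is invariant under the ${\mathbb C}^*$-scaling $z\mapsto tz$. In particular $0\in V$, and $0$ lies in the closure of every irreducible component (each component is itself a cone through $0$, being a union of scaling orbits). Hence $V$ is connected. Now I would apply the displayed principle from the text: if $V$ is normal, its irreducible components are disconnected from one another, so a connected normal $V$ can have only one irreducible component, i.e. $V$ is irreducible. Finally, again by the text, each irreducible component of a normal variety has singular set of codimension $\geq 2$; applied to the unique component $V$ itself, this gives that the singular set of $V$ is of codimension $\geq 2$.

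There is really no main obstacle here beyond making the cone-connectedness observation explicit; the substantive content — that normality forces the components to be both disconnected and individually nonsingular in codimension one — has already been recorded in the paragraphs immediately above the corollary (via the local finite branched covering picture and the Riemann extension theorem). One small point worth stating carefully is why the components of a cone all meet at the origin: each component is defined by a prime ideal that, because $V$ is homogeneous, can be taken homogeneous, so its zero set is again a cone and thus contains $0$. With that, connectedness of $V$ is immediate and the corollary drops out.

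\begin{proof} Since the $p_i$ are homogeneous, $V$ is invariant under the scaling action $z\mapsto tz$ of ${\mathbb C}^*$, hence is a cone with vertex $0\in V$. Each irreducible component of $V$ is the zero locus of a minimal prime over the homogeneous ideal $(p_0,\dots,p_m)$, which may be chosen homogeneous, so each component is itself a cone and therefore contains $0$. Thus $V$ is connected. By the discussion above, if $V$ is normal its irreducible components are pairwise disconnected; a connected normal $V$ then has a single irreducible component, so $V$ is irreducible. Finally, as noted above, each irreducible component of a normal variety has singular set of codimension $\geq 2$, so the singular set of $V$ is of codimension $\geq 2$. \end{proof}
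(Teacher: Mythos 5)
Your proof is correct and follows exactly the paper's own route: the paper likewise observes that $V$, being cut out by homogeneous polynomials, is a cone and hence connected, and then invokes the preceding facts that a normal variety's irreducible components are disconnected and each has singular set of codimension $\geq 2$. Your added remark that each component is itself a cone through the origin just makes the connectedness step explicit.
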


The corollary holds because $V$ defined by the zeros of homogeneous polynomials
is a cone, which is clearly connected.

Conversely, if $V$ is defined by the zeros of homogeneous polynomials $p_0,\cdots,p_m$, what
are the conditions that guarantee that $V$ is normal? A necessary condition
is that the singular set of $V$ is of codimension $\geq 2$.
The other crucial condition is that $p_0,\cdots,p_m$ form a {\em regular sequence}
in the polynomial ring of ${\mathbb C}^n$.

\begin{definition} %%Recall~\cite[p 152]{Ku} that 
A regular sequence in a
commutative ring $R$ with identity
is a sequence $a_1,\cdots,a_k$ in $R$ such that the ideal $(a_1,\cdots,a_k)$ is
not $R$, and moreover, $a_1$ is not a zero divisor in $R$ and $a_{i+1}$ is not
a zero divisor in the quotient ring $R/(a_1,\cdots,a_i)$ for $1\leq i\leq k-1$.
\end{definition}
We have the criterion of normality of Serre~\cite[p 457]{Ei}.

\begin{theorem}\label{serre}(Special case) Let $V\subset{\mathbb C}^n$ be defined by the zeros of homogeneous polynomials $p_0,\cdots,p_m$
that form a regular sequence in the polynomial ring of ${\mathbb C}^n$. Let $J$ be the subvariety of
of $V$ where the Jacobian matrix of $p_0,\cdots,p_m$ $< m+1$. Then $V$ is an irreducible normal variety if
the codimension of $J$ is at least 2 in $V$, in which case the ideal $(p_0,p_1,\cdots,p_m)$ is prime.
\end{theorem}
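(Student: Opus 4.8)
The proof I have in mind is a reduction of this special case to the general normality criterion of Serre quoted from~\cite{Ei}, using the structural consequences of the regular-sequence hypothesis. Write $R$ for the polynomial ring of ${\mathbb C}^n$ (or, in the analytic setting of~\cite{Gu}, the local ring of germs of holomorphic functions at a point of $V$) and set $A:=R/(p_0,\dots,p_m)$. The first step is to record the "depth half" of Serre's conditions, which uses only that $p_0,\dots,p_m$ is a regular sequence and nothing about the Jacobian: a quotient of $R$ by a regular sequence of length $m+1$ is a complete intersection, hence Cohen--Macaulay, of pure dimension $n-m-1$. In particular $A$ satisfies Serre's condition $(S_2)$ (indeed $(S_k)$ for every $k$), and every irreducible component of $V$ has dimension $n-m-1$. (In the analytic category one cites that analytic complete intersections are Cohen--Macaulay.)

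The second step is the "regularity half". By the Jacobian criterion for complete intersections, a point $x\in V$ is a regular point of $V$ --- equivalently, the local ring of $A$ at $x$ is a regular local ring --- exactly when the Jacobian matrix of $p_0,\dots,p_m$ attains its maximal value $m+1$ at $x$; analytically this is just the implicit function theorem, which near such a point exhibits $V$ as a smooth submanifold of dimension $n-m-1=\dim A$. Consequently the non-regular locus of $V$ is precisely the subvariety $J$. The hypothesis that $J$ has codimension $\geq 2$ in $V$ now says two things: first, the generic point of each component of $V$ lies off $J$, so $A$ is generically regular; combined with $(S_1)$ from Cohen--Macaulayness this already forces $A$ to be reduced. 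Second, every prime of $A$ of height $1$ also lies off $J$, so $A$ localized at it is regular; that is exactly Serre's condition $(R_1)$.

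The third step is to invoke the criterion of Serre as stated in the excerpt: a Noetherian ring satisfying $(R_1)$ and $(S_2)$ is normal. Hence $A$ is normal, i.e. $V$ is a normal variety, and its singular set --- which is $J$ --- has codimension $\geq 2$, as already noted in the discussion preceding Theorem~\ref{serre}. Finally, because $p_0,\dots,p_m$ are homogeneous, $V$ is a cone and therefore connected; and a normal variety is the disjoint union of its irreducible components, so connectedness forces $V$ to be irreducible. Equivalently, a normal ring with connected spectrum is a domain, so $(p_0,\dots,p_m)$ is a prime ideal.

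The only point requiring genuine care is the precise form of the Jacobian criterion in Step~2 --- namely that for a complete intersection the singular locus equals the rank-drop locus $J$, not merely lies inside it. This is where the regular-sequence hypothesis is used a second time: one needs $\dim A=n-m-1$ to know that rank $m+1$ is simultaneously the maximal and the expected rank of the Jacobian, so that maximal rank genuinely certifies regularity. The Cohen--Macaulayness of (analytic) complete intersections, invoked for $(S_2)$, is the other ingredient best cited from the literature rather than reproved here.
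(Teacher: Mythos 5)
Your proposal is correct and follows essentially the route the paper intends: the paper states this theorem as a quoted special case of Serre's criterion from~\cite{Ei} without writing out the deduction, and obtains irreducibility exactly as you do, from the fact that a cone is connected and a normal variety has disconnected irreducible components. Your write-up simply supplies the standard details --- Cohen--Macaulayness of the complete intersection giving $(S_2)$ and equidimensionality, the Jacobian criterion identifying the non-regular locus with $J$, and $(R_1)+(S_2)\Rightarrow$ normal --- and correctly flags the one genuinely delicate point, namely that equidimensionality is what makes maximal Jacobian rank equivalent to (and not merely sufficient for) regularity.
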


The criterion provides a scheme for checking whether a sequence of homogeneous polynomials $p_0,\cdots,p_m$ of the 
same degree $\geq 1$ in the polynomial
ring of ${\mathbb C}^n$ is a regular sequence~\cite[p 57]{CCJ}

\begin{proposition}\label{nullstellensatz} Let $p_0,\cdots,p_m$ be a sequence of linearly independent homogeneous polynomials of the same degree $\geq 1$
in the polynomial ring of ${\mathbb C}^n$. For each $0\leq k\leq m-1$, let $V_k$ be the variety defined by the zeros 
of $p_0,\cdots,p_k$ and let $J_k$ be the subvariety of $V_k$ where the Jacobian of $p_0,\cdots,p_k$ is of rank $< k+1$.
Then $p_0,p_1,\cdots,p_m$ form a regular sequence if $J_k$ is of codimension at least 2 in $V_k$ for $0\leq k\leq m-1$.
\end{proposition}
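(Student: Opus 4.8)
The plan is to deduce this from Serre's criterion (Theorem~\ref{serre}) by an induction on $k$, building up the regular sequence one polynomial at a time. Since all $p_i$ have the same degree $d\geq 1$ and are linearly independent, no nontrivial linear combination vanishes identically, so in particular $p_0\neq 0$; as the polynomial ring of ${\mathbb C}^n$ is an integral domain, $p_0$ is not a zero divisor, which settles the base case $k=0$. For the inductive step, suppose $p_0,\dots,p_k$ already form a regular sequence; I must show $p_{k+1}$ is not a zero divisor in $R/(p_0,\dots,p_k)$.

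The key observation is that a regular sequence of homogeneous elements makes $R/(p_0,\dots,p_k)$ a Cohen--Macaulay ring, hence it has no embedded primes and all its associated primes have the same dimension, namely $n-(k+1)$; equivalently every irreducible component of $V_k$ has codimension exactly $k+1$, and $V_k$ is equidimensional. Then $p_{k+1}$ is a zero divisor in $R/(p_0,\dots,p_k)$ precisely when it lies in one of these associated (minimal) primes, i.e.\ when $p_{k+1}$ vanishes identically on some irreducible component $Z$ of $V_k$. So I need to rule this out. Here is where the hypothesis on $J_k$ enters: I want to invoke Theorem~\ref{serre} applied to the sequence $p_0,\dots,p_k$ (which is a regular sequence by the inductive hypothesis) together with the fact that $J_k$, the locus in $V_k$ where the Jacobian of $p_0,\dots,p_k$ has rank $<k+1$, has codimension $\geq 2$. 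Serre's criterion then tells us $V_k$ is an \emph{irreducible} normal variety and $(p_0,\dots,p_k)$ is prime.

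With $V_k$ irreducible, the only way $p_{k+1}$ could be a zero divisor mod $(p_0,\dots,p_k)$ is if $p_{k+1}$ vanishes on all of $V_k$, i.e.\ $p_{k+1}\in (p_0,\dots,p_k)$ (using that the ideal is prime, hence radical, so the vanishing ideal of $V_k$ is exactly $(p_0,\dots,p_k)$). But $p_{k+1}$ is homogeneous of degree $d$, and any element of $(p_0,\dots,p_k)$ of degree $d$ is a \emph{linear} combination $\sum_{i\le k} c_i p_i$ with constants $c_i$, since the generators already have degree $d$ and multiplying by anything of positive degree overshoots. This would contradict the linear independence of $p_0,\dots,p_{k+1}$. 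Hence $p_{k+1}\notin(p_0,\dots,p_k)$, it is not a zero divisor in the quotient, and the induction proceeds; moreover the ideal $(p_0,\dots,p_m)$ is proper because, again by linear independence, $1\notin(p_0,\dots,p_m)$ (the ideal contains no nonzero elements of degree $0$). This completes the proof.

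The step I expect to be the main obstacle — or at least the one requiring the most care to state cleanly — is the passage ``regular sequence $\Rightarrow$ equidimensional/no embedded components,'' and correspondingly the identification ``zero divisor mod $(p_0,\dots,p_k)$ $\iff$ lies in a minimal prime of $V_k$.'' In the self-contained geometric language of the excerpt this is exactly the content that $V_k$ has pure codimension $k+1$ with all components behaving like a complete intersection, which is why the induction can even get off the ground: without knowing $V_k$ is equidimensional one cannot conclude irreducibility from the codimension-$2$ bound on the singular locus. Once irreducibility of $V_k$ is in hand via Serre, the remainder is the short degree-counting argument above. It is worth noting that the proposition is stated with a one-sided implication (``if'') precisely so that one never has to prove the converse; the induction only consumes the Jacobian hypotheses $J_0,\dots,J_{k}$ and never needs information about $J_{k+1}$ beyond the final application giving primeness of the full ideal.
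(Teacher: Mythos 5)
Your proof is correct and follows essentially the same route as the paper's: induct on $k$, apply Serre's criterion (Theorem~\ref{serre}) to $p_0,\dots,p_k$ at each stage to conclude $(p_0,\dots,p_k)$ is prime, and then exclude $p_{k+1}\in(p_0,\dots,p_k)$ by homogeneity, degree counting and linear independence. The Cohen--Macaulay/equidimensionality detour is superfluous once primeness is in hand (a prime ideal makes the quotient a domain, so zero divisor means membership in the ideal), but it does no harm.
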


In fact, repeated applications of Theorem~\ref{serre} establish that the ideals $(p_0,\cdots,p_k)$ are all prime for $0\leq k\leq m-1$.
The linear independence of $p_0,\cdots,p_m$ of equal degree then demands that $p_{k+1}$ cannot be a zero divisor in the quotient ring $P[n]/(p_0,\cdots,p_k)$
by Nullstellensatz, where $P[n]$ stands for the polynomial ring of ${\mathbb C}^n$. The homogeneity of $p_0,\cdots,p_{m_1}$ of degree $\geq 1$
shows that $(p_0,\cdots,p_{m_1})$ is a proper ideal.

The components $p_0,\cdots,p_{m_1}$ of the second fundamental form of $M_{+}$ of an isoparametric hypersurface with four principal curvatures
are linearly independent homogeneous polynomials of second degree, which fits perfectly in Proposition~\ref{nullstellensatz}. By exploring more 
commutative algebra (the algebraic independence of a regular sequence) and investigating the codimension 2 condition in Proposition~\ref{nullstellensatz},
it is established in~\cite{Chiq} the following.

\begin{theorem}\label{THM} Let $M$ be an isoparametric hypersurface with four principal
curvatures and multiplicities $(m_1,m_2), m_1<m_2$. Assume the components
$p_0,p_1,\cdots,p_{m_1}$ of the second fundamental form of the focal
submanifold $M_{+}$ form a regular sequence in the ring of polynomials of ${\mathbb C}^{m_1+2m_2}$.
Then $M$ is of OT-FKM type.
\end{theorem}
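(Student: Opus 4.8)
The plan is to feed the regular-sequence hypothesis into Serre's criterion and then harvest the resulting normality geometrically. Fix $x\in M_{+}$ and work on the complexified tangent space $\mathbb{C}^{m_1+2m_2}=T_xM_{+}\otimes\mathbb{C}$, with its splitting into the complexifications of $E_{1},E_{-1},E_0$; the quadrics $p_0,\dots,p_{m_1}$ have the block shapes in \eqref{0a}. By hypothesis they form a regular sequence, so by Proposition~\ref{nullstellensatz} (applied at each level $k<m_1$) the ideals $(p_0,\dots,p_k)$ are prime, and the complete intersection $\Sigma_x:=\{p_0=\cdots=p_{m_1}=0\}$ has codimension exactly $m_1+1$, i.e.\ $\dim\Sigma_x=2m_2-1$. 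To upgrade this to normality of $\Sigma_x$ via Theorem~\ref{serre} one must bound the Jacobian-degeneracy locus $J_{m_1}\subset\Sigma_x$: here is the first place where the isoparametric geometry enters, since $\nabla p_0,\dots,\nabla p_{m_1}$ become dependent only on loci controlled by the mirror identities \eqref{mirror} and the structure equations \eqref{diff}, \eqref{theta}, and one shows these loci have codimension $\geq2$ in $\Sigma_x$. The payoff is that $\Sigma_x$ is an irreducible normal variety, so the Riemann extension theorem holds on it in codimension one.

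Next I would extract the algebra of the blocks $A_a:E_{-1}\to E_{1}$, $B_a:E_0\to E_{1}$, $C_a:E_0\to E_{-1}$. Note first that the complexified $E_0$ sits inside $\Sigma_x$ as a linear subspace (all $p_a$ have vanishing $E_0E_0$-block), and rotating the normal $n_0$ in the unit normal sphere produces, through \eqref{diff}, a whole family of such linear subspaces of $\Sigma_x$; the dual blocks $A_\alpha^{*},B_\alpha^{*},C_\alpha^{*}$ of \eqref{duality} encode the second fundamental form $S^p_{\alpha\mu}$ at the mirror point, which by Lemma~\ref{Q0} is exactly the cubic $q^0$. The mirror identity \eqref{mirror}, rewritten in terms of $A_a,B_a,C_a$ and their duals, together with the OT expansion \eqref{OT} of $F$ evaluated at points off $M_{+}$, furnishes a system of quadratic/cubic polynomial relations among the $p_a$ and $q^a$. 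Using normality of $\Sigma_x$ — and the fact (algebraic independence of a regular sequence) that the $p_a$ satisfy no polynomial relation — one propagates these relations from the smooth locus to all of $\Sigma_x$ and forces the generic null vector $y\in\Sigma_x$ to have the degeneracy profile of Ozeki--Takeuchi's Condition B. Concretely this means the $A_a$ (suitably normalized, with $A_0$ playing the role of the identity) anticommute up to sign and the $B_a,C_a$ fit the attendant Clifford pattern.

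The last step is recognition: the data $(A_a,B_a,C_a)$ satisfying these Clifford relations is precisely a Clifford system $P_0,\dots,P_{m_1}$ on $\mathbb{R}^{2l}$ with $l=k\,\delta(m_1)$, and one checks that the Cartan--M\"unzner polynomial reconstructed from it via \eqref{OT} is the OT--FKM polynomial $|x|^4-2\sum\langle P_i x,x\rangle^2$; hence $M$ is of OT--FKM type. This final recognition is essentially the converse of the FKM construction and is bookkeeping once the Clifford relations are secured. The main obstacle is the middle step: normality is only a codimension-two statement, so the real work is the dimension count of all the \emph{bad loci} — the subvarieties of $\Sigma_x$ where the ranks of $A_a$, $B_a$, $C_a$ (equivalently, of the blocks of $S$ at $x$ and at the mirror point) drop below their generic values — showing each has codimension $\geq2$, which is exactly where the algebra alone is too weak and the identities \eqref{mirror} and \eqref{theta} coming from the isoparametric structure must be used.
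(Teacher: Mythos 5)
The paper does not actually prove this theorem here --- it quotes it from \cite{Chiq}, and the sentence introducing it tells you what the real proof uses: ``more commutative algebra (the algebraic independence of a regular sequence).'' Your proposal instead routes everything through normality of $\Sigma_x=V(p_0,\dots,p_{m_1})$ via Theorem~\ref{serre}, and that is where the gap lies. Serre's criterion needs, on top of the regular-sequence hypothesis, a codimension~$\geq 2$ bound on the Jacobian-degeneracy locus of the \emph{full} system $p_0,\dots,p_{m_1}$; this is an extra geometric input not contained in the hypothesis of the theorem, you only assert that ``one shows'' it, and it is exactly the kind of estimate that the rest of this paper demonstrates can fail for small multiplicities. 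Worse, normality would not even finish the job: primality of $(p_0,\dots,p_{m_1})$ does not by itself extract $q^a$ from the identity $\sum_a p_aq^a=0$; what does is the nonzerodivisor property of each $p_a$ modulo the ideal generated by the \emph{other} $p_b$'s, i.e.\ exactly the regular-sequence hypothesis. The actual engine of the proof is the single Ozeki--Takeuchi identity $p_0q^0+\cdots+p_{m_1}q^{m_1}=0$ (equation \eqref{PQ}, one of the identities obtained by expanding the Cartan--M\"unzner equations via \eqref{OT}), which you never invoke. Since a homogeneous regular sequence may be permuted, that identity forces $q^a=\sum_b h_{ab}p_b$ with $h_{ab}$ linear; substituting back and using that the syzygies of a regular sequence are generated by the Koszul ones (the ``algebraic independence'' the paper alludes to) forces $h_{ab}=-h_{ba}$. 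That is precisely Ozeki--Takeuchi's Condition~B, from which \cite{CCJ} deduces the OT-FKM form of $F$.

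Beyond this structural issue, the middle of your argument is too vague to check: ``forces the generic null vector $y\in\Sigma_x$ to have the degeneracy profile of Condition B'' is not a meaningful reduction, since Condition~B is the ideal-membership statement above with skew-symmetric linear coefficients, not a rank or degeneracy condition on null vectors. And the final ``recognition'' step is understated: passing from Condition~B to a genuine Clifford system and the FKM polynomial is a substantial theorem (Ozeki and Takeuchi originally needed Condition~A as well; dispensing with it is part of the work in \cite{CCJ}), not bookkeeping. Your instinct to bring in \eqref{mirror} and the expansion \eqref{OT} is in the right spirit, but the decisive mechanism --- nonzerodivisors and Koszul syzygies applied to \eqref{PQ} --- is missing, and the normality detour you propose in its place is neither available nor sufficient.
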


\begin{corollary} $p_0,\cdots,p_{m_1}$ of $M_{+}$ do not form a regular
sequence in general when $(m_1,m_2)=(4,5),(3,4),(7,8)$ or $(6,9)$. 
\end{corollary}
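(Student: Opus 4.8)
The plan is to read this corollary as the contrapositive of Theorem~\ref{THM}, tested against the known examples with the four multiplicity pairs, supplemented — for $(6,9)$ and $(4,5)$, where the second fundamental form of $M_{+}$ is written out explicitly in Sections~\ref{SUB} and~\ref{scn1} — by a direct dimension count. Since ``in general'' only asks for one isoparametric hypersurface per pair whose $p_0,\dots,p_{m_1}$ fail to be a regular sequence, I would argue pair by pair. The reduction I would use throughout is that the polynomial ring of $\mathbb{C}^{m_1+2m_2}$ is Cohen--Macaulay, so the $m_1+1$ homogeneous quadrics $p_0,\dots,p_{m_1}$ form a regular sequence if and only if $V(p_0,\dots,p_{m_1})$ has the expected codimension $m_1+1$; as $m_1+1$ equations never drop the dimension by more than $m_1+1$, it suffices to exhibit one component of $V(p_0,\dots,p_{m_1})$ of dimension $\geq 2m_2$.

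For $(m_1,m_2)=(4,5)$ this is quickest through Theorem~\ref{THM} alone: the homogeneous hypersurface $SU(5)/Sp(2)$ of Section~\ref{scn1} is not of OT-FKM type (as recalled in the Introduction, $\{4,5\}$ never occurs among the OT-FKM multiplicities), so its $p_0,\dots,p_4$ cannot be a regular sequence. For the three OT-FKM pairs $(3,4),(6,9),(7,8)$ I would instead use that each is realized by two \emph{incongruent} OT-FKM hypersurfaces in one common sphere ($S^{15}$ for $\{3,4\}$, $S^{31}$ for $\{6,9\}$ and $\{7,8\}$): one from a Clifford system of index $m_1$, for which $M_{+}$ (codimension $m_1+1$) carries the Clifford--Stiefel structure, and one from a Clifford system of index $m_2$, for which $M_{-}$ carries it while $M_{+}$ is the opposite focal manifold (the homogeneous $\{6,9\}$ example of Section~\ref{SUB} being of the latter kind). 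If $p_0,\dots,p_{m_1}$ of $M_{+}$ were always a regular sequence, then by Theorem~\ref{THM} every such hypersurface would be of OT-FKM type, and the rigid reconstruction of~\cite{CCJ},~\cite{Chiq} — the same mechanism that forces uniqueness up to congruence in the general range $m_2\geq 2m_1-1$ — would single out a unique one per sphere, contradicting the two incongruent examples.

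The failure can also be made explicit, and for $(6,9)$ and $(4,5)$ this is the route that explains why the rank-one structure in~\eqref{rank} and~\eqref{RANK} was flagged. In those models the blocks $B_a=C_a\colon E_0\to E_{\pm1}$ span a space of rank-one matrices whose images all lie in a single coordinate line $\mathbb{C}e_{m_2}\subset E_{\pm1}$, while the $A_a\colon E_{-1}\to E_1$ have vanishing $m_2$-th row and column and, on the remaining coordinates, are right multiplications on $\mathbb{H}_{\mathbb{C}}$ (for $(4,5)$; cf.~\eqref{A1},~\eqref{quaternion}) or $\mathbb{O}_{\mathbb{C}}$ (for $(6,9)$; cf.~\eqref{A2},~\eqref{cliff}). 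Writing $x\in E_1,\,y\in E_{-1},\,z\in E_0$ and restricting to the hyperplane $x_{m_2}+y_{m_2}=0$, the forms $p_1,\dots,p_{m_1}$ lose their $z$-dependence and reduce to $\bar{x}^{tr}\bar{A}_a\bar{y}$, while $p_0$ reduces to $Q(\bar{x})-Q(\bar{y})$, a bar denoting projection off the $m_2$-th coordinate and $Q$ the standard quadratic form. So the zero locus of $p_0,\dots,p_{m_1}$ inside that hyperplane contains the free coordinates $x_{m_2}$ and $z$ (contributing $1+m_1$ to the dimension) together with $\Lambda=\{(\bar{x},\bar{y}):Q(\bar{x})=Q(\bar{y}),\ \bar{x}^{tr}\bar{A}_a\bar{y}=0\ \forall a\}$. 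Over the null cone $\{Q(\bar{y})=0\}$ the right-multiplication operator $R_{\bar{y}}$ drops rank — this is precisely where the failure of $\mathbb{H}$ and $\mathbb{O}$ to stay division algebras after complexification enters — so the equations $\bar{x}^{tr}\bar{A}_a\bar{y}=0$ constrain $\bar{x}$ less there, and the residual $\bar{x}$-space, being an image of $R_{\bar{y}}$ on which $Q$ is the (multiplicative) composition norm, is totally isotropic, so $Q(\bar{x})=Q(\bar{y})=0$ costs nothing. A short count then makes $\dim\Lambda$ exceed its expected value by $2$, whence $\dim V(p_0,\dots,p_{m_1})=(m_1+2m_2)-(m_1+1)+1=2m_2$ and $p_0,\dots,p_{m_1}$ is not a regular sequence.

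The main obstacle is this last count in the octonionic cases: one must verify that $R_{\bar{y}}$ has the right rank (I expect rank $4$ on the null cone of $\mathbb{O}_{\mathbb{C}}$, against rank $2$ on the rank-one stratum of $\mathbb{H}_{\mathbb{C}}\cong M_2(\mathbb{C})$) and that the residual isotropic space has exactly the dimension needed for the excess to be genuinely $\geq 1$; the corresponding check for $(4,5)$ is a quick linear-algebra computation with~\eqref{A1} and~\eqref{rank}, but the $6$-dimensional Clifford action on $\mathbb{O}_{\mathbb{C}}$ needs more care, and for $(3,4)$ and $(7,8)$ one must first extract the explicit $M_{+}$ model from the duality formulas~\eqref{duality}, which the excerpt does not record. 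The Theorem~\ref{THM}-plus-rigidity argument of the second paragraph sidesteps all of this for $(3,4),(6,9),(7,8)$, and Theorem~\ref{THM} by itself handles $(4,5)$, so strictly the explicit computation is optional — but it is what displays the decisive role of~\eqref{rank} and~\eqref{RANK}.
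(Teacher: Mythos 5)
Your $(4,5)$ argument is exactly the paper's: the homogeneous example exists and can never be of OT-FKM type, so by the contrapositive of Theorem~\ref{THM} its $p_0,\dots,p_4$ cannot form a regular sequence. For $(3,4),(6,9),(7,8)$ you also pick the right witness --- the OT-FKM hypersurface whose Clifford action is on $M_{-}$ --- but your contradiction is routed through a claim that neither follows from Theorem~\ref{THM} as stated nor is needed: that the reconstruction would ``single out a unique one per sphere.'' Concluding that ``every such hypersurface would be of OT-FKM type'' is no contradiction at all, since both incongruent examples \emph{are} of OT-FKM type; and uniqueness up to congruence within a fixed sphere is not what the reconstruction delivers (the conclusion of Theorem~\ref{THM}, even in the range $m_2\ge 2m_1-1$, is membership in the OT-FKM class, not uniqueness). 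What the paper uses instead is the sharper content of the reconstruction in~\cite{CCJ},~\cite{Chiq}: when $p_0,\dots,p_{m_1}$ of $M_{+}$ form a regular sequence, the Clifford system produced acts on $M_{+}$, the focal manifold of codimension $m_1+1$. Applied to a witness whose Clifford action is on $M_{-}$, this is already absurd, because an OT-FKM hypersurface with Clifford action on $M_{-}$ is incongruent to any with Clifford action on $M_{+}$. Replacing your uniqueness step by this observation closes the proof; as written, that step is a genuine gap.

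Your third paragraph (the Cohen--Macaulay reduction plus an explicit excess-dimension count on the models, exploiting the rank-one structure of~\eqref{rank} and~\eqref{RANK}) is a legitimate alternative in principle and would make the failure concrete, but as you concede it is not carried out: the rank of $R_{\bar y}$ over the null cone of the complexified octonions is unverified, and the $M_{+}$ data for $(3,4)$ and $(7,8)$ are not extracted. So it cannot substitute for the argument above.
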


\begin{proof} For $(m_1,m_2)=(3,4),(7,8),(6,9)$, consider an OT-FKM type
hypersurface whose
Clifford action is on $M_{-}$. If $p_0,\cdots,p_{m_1}$ formed a regular sequence,
then the isoparametric hypersurface would be of OT-FKM
type with the Clifford action on $M_{+}$; this is impossible
because such an OT-FKM type hypersurface whose Clifford action is
on $M_{-}$ is incongruent to one whose Clifford action is on $M_{+}$.
On the other hand, a hypersurface with $(m_1,m_2)=(4,5)$ can never be of OT-FKM type.
\end{proof}

It is shown in~\cite{Chiq} that $p_0,\cdots,p_{m_1}$ do form a regular
sequence when $m_2\geq 2m_1-1$ so that the isoparametric hypersurface is of OT-FKM
type. This leaves open only $(m_1,m_2)=(4,5),(3,4),(6,9)$ and $(7,8)$.
On the other hand, though $p_0,\cdots,p_3$ no longer form a regular sequence
in general for
$(m_1,m_2)=(3,4)$, an argument in~\cite{Chiq} that explores
Proposition~\ref{nullstellensatz} and the notion of Condition A~\cite[I]{OT} shows that
the isoparametric hypersurface with $(m_1,m_2)=(3,4)$ is of OT-FKM type. We will carry this scheme one step further in the next section when 
$(m_1,m_2)=(4,5)$ or $(6,9)$.

\section{The second fundamental form} We show in this section that the
second fundamental form of $M_{+}$ of
an isoparametric hypersurface with multiplicities $(m_1,m_2)=(4,5)$ in $S^{19}$ is, up to
an orthonormal frame change, identical with that of the homogeneous example given
in Section~\ref{scn1}. Furthermore, in the case $(m_1,m_2)=(6,9)$
in $S^{31}$, either
the isoparametric hypersurface is the inhomogeneous example constructed
by Ferus, Karcher and M\"{u}nzner, or, after an orthonormal frame change, the second
fundamental form of $M_{+}$ is identical with that of the homogeneous example.

Let us first recall
the codimension 2 estimates in~\cite{Chiq} that is crucial for the classification
of isoparametric hypersurfaces with four principal curvatures when the
multiplicity pair $(m_1,m_2)$ is either where $m_2\geq 2m_1-1,$ or is $(3,4)$.

Let $p_0,p_1,\cdots,p_{m_1}$ be the components of the second fundamental
form of $M_{+}$. We agree that ${\mathbb C}^{2m_2+m_1}$ consists of
points $(u,v,w)$
with coordinates $u_\alpha,v_\mu$ and $w_p$, where $1\leq\alpha,\mu\leq m_2$
and $1\leq p\leq m_1$. For $0\leq k\leq m_1$, let
$$
W_k:=\{(u,v,w)\in{\mathbb C}^{2m_2+m_1}:p_0(u,v,w)=\cdots=p_k(u,v,w)=0\}.
$$
We want to estimate the dimension of the subvariety $U_k$ of
${\mathbb C}^{2m_2+m_1}$, where
$$
U_k:=\{(u,v,w)\in{\mathbb C}^{2m_2+m_1}
:\text{rank of the Jacobian of}\; p_0,\cdots,p_k<k+1\}.
$$
$p_0,\cdots,p_{k}$ give rise to a linear system of
cones ${\mathcal C}_\lambda$ defined by
$$
c_0p_0+\cdots+c_{k}p_{k}=0
$$
with
\begin{equation}\label{lda}
\lambda:=[c_0:\cdots:c_{k}]\in{\mathbb C}P^{k}.
\end{equation}
The singular subvariety of ${\mathcal C}_\lambda$ is
$$
{\mathscr S}_\lambda:=\{(u,v,w)\in{\mathbb C}^{2m_2+m_1}:
(c_0S_{n_0}+\cdots+c_kS_{n_k})\cdot (u,v,w)^{tr}=0\},
$$
where $<S_{n_i}(X),Y>=<S(X,Y),n_i>$ is the shape operator of the focal
manifold $M_{+}$ in the normal direction $n_i$; we have 
\begin{equation}\label{union}
U_k=\cup_\lambda{\mathscr S}_\lambda.
\end{equation}
We wish to establish 
\begin{equation}\label{prime}
\dim(W_k\cap U_k)\leq\dim(W_k)-2
\end{equation}
for $k\leq m_1-1$ to verify that $p_0,p_1,\cdots,p_{m_1}$ form a regular
sequence.

We first estimate the dimension of ${\mathscr S}_\lambda$. We break it into
two cases. If $c_0,\cdots,c_{k}$ are constant multiples of either all real
or all purely imaginary numbers, then
$$
\dim({\mathscr S}_\lambda)=m_1,
$$
since $c_0S_{n_0}+\cdots+c_kS_{n_k}=cS_n$ for some unit normal vector $n$ and
some nonzero constant $c$, and we know that the
null space
of $S_n$ is of dimension $m_1$. Otherwise, after a normal basis change
we can assume that ${\mathscr S}_\lambda$ consists of elements $(u,v,w)$
of the form $(S_{n^*_1}-\tau_\lambda S_{n^*_0})\cdot(u,v,w)^{tr}=0$ for some nonzero
complex number
$\tau_\lambda$, relative to a new orthonormal normal basis
$n^*_0,n^*_1,\cdots,n^*_{k}$
in the linear span of $n_0,n_1,\cdots,n_k$. That is, in matrix form,

\begin{equation}\label{matrix}
\begin{pmatrix}0&A&B\\A^{tr}&0&C\\B^{tr}&C^{tr}&0\end{pmatrix}
\begin{pmatrix}x\\y\\z\end{pmatrix}
=\tau_\lambda\begin{pmatrix}I&0&0\\0&-I&0\\0&0&0\end{pmatrix}
\begin{pmatrix}x\\y\\z\end{pmatrix},
\end{equation}
where $x,y$ and $z$ are (complex) eigenvectors of (real) $S_{n^*_0}$ with
eigenvalues $1,-1$ and $0$, respectively.

\begin{remark}\label{rk0} We agree to choose $n_0^*$ and $n_1^*$ as follows.
Decompose $n:=c_0n_0+\cdots+c_kn_k$ into its real and imaginary parts 
$n=\alpha+\sqrt{-1}\beta$. Define $n_0^*$ and $n_1^*$ by performing the Gram-Schmidt
process on $\alpha$ and $\beta$.
\end{remark}

Lemma 49~\cite[p 64]{CCJ} ensures
that we can assume
\begin{equation}\label{BC}
B=C=\begin{pmatrix}0&0\\0&\sigma\end{pmatrix},
\end{equation}
where $\sigma$ is a nonsingular diagonal matrix of size
$r_\lambda$-by-$r_\lambda$ with $r_\lambda$ the rank of $B$,
and $A$ is of the form

\begin{equation}\label{A}
A=\begin{pmatrix}I&0\\0&\Delta\end{pmatrix},
\end{equation}
where $\Delta=\text{diag}(\Delta_1,\Delta_2,\Delta_3,\cdots)$
is of size $r_\lambda$-by-$r_\lambda$, in which $\Delta_1=0$
and $\Delta_i,i\geq 2,$ are nonzero skew-symmetric matrices expressed in the
block form
$\Delta_i=\text{diag}(\Theta_i,\Theta_i,\Theta_i,\cdots)$ with $\Theta_i$ a
2-by-2 matrix of the form
$$
\begin{pmatrix}0&f_i\\-f_i&0\end{pmatrix}
$$
for some $0<f_i<1$. We decompose $x,y,z$ into $x=(x_1,x_2),y=(y_1,y_2),
z=(z_1,z_2)$ with $x_2,y_2,z_2\in{\mathbb C}^{\,r_\lambda}$.
Equation~\eqref{matrix} is

\begin{eqnarray}\label{estimate}
\aligned
x_1=-\tau_\lambda y_1,&\qquad y_1=\tau_\lambda x_1,\\
-\Delta x_2+\sigma z_2=-\tau_\lambda y_2,&\qquad \Delta y_2+\sigma z_2=\tau_\lambda x_2,\\
\Delta(x_2&+y_2)=0.
\endaligned
\end{eqnarray}
This can be solved explicitly to obtain that 
$x_2=-y_2$ and $z_2$ can be solved (linearly) in terms of $x_2$. 
%%by the second set of
%%equations of~\eqref{estimate}.
Conversely $x_2=-y_2$
can be solved in terms of $z_2$ when $\tau_\lambda\neq\pm f_i\sqrt{-1}$ for all $i$,
so that $z$ can be chosen to be a free variable
in this case. So, either $x_1=y_1=0$, in which case
$$
\dim({\mathscr S}_\lambda)=m_1,
$$
or both $x_1$ and $y_1$ are nonzero,
in which case $y_1=\pm\sqrt{-1}x_1$ and so
\begin{equation}\label{EST}
\dim({\mathscr S}_\lambda)=m_1+m_2-r_\lambda.
\end{equation}
Since eventually we must estimate the dimension of $W_k\cap U_k$, let us cut
${\mathscr S}_\lambda$ by
$$
0=p^*_0=\sum_{\alpha}(x_\alpha)^2-\sum_{\mu}(y_\mu)^2.
$$
\noindent Case 1. $x_1$ and $y_1$ are both nonzero. This is the case of
nongeneric $\lambda\in{\mathbb C}P^k$. We substitute $y_1=\pm \sqrt{-1}x_1$ and $x_2$
and $y_2$ in terms of $z_2$ into $p^*_0=0$ to
deduce
$$
0=p^*_0=(x_1)^2+\cdots+(x_{m_2-r_\lambda})^2+z\; \text{terms};
$$
hence $p^*_0=0$ cuts ${\mathscr S}_\lambda$ to reduce the dimension by 1, i.e., by~\eqref{EST},
\begin{equation}\label{sub}
\dim(W_{k}\cap{\mathscr S}_\lambda)\leq m_1+m_2-r_\lambda-1,
\end{equation}
noting that $W_k$ is also cut out by $p^*_0,p^*_1,\cdots,p^*_k$. Meanwhile,
only a subvariety of $\lambda$ of
dimension $k-1$ in ${\mathbb C}P^k$ assumes $\tau_\lambda=\pm\sqrt{-1}$. (In fact,
the subvariety is the hyperquadric ${\mathcal Q}$. See Remark~\ref{rk1} below.)
Therefore, if we stratify ${\mathcal Q}$ into subvarieties ${\mathcal L}_j$ over which
$r_\lambda=j$, then by~\eqref{sub} an irreducible component ${\mathcal W}_j$
of $W_k\cap (\cup_{\lambda\in {\mathcal L}_j} {\mathscr S}_\lambda)$
will satisfy                            
\begin{equation}\label{W}
\dim({\mathcal W}_j)\leq\dim(W_{k}\cap{\mathscr S}_\lambda)+k-1\leq
m_1+m_2+k-2-j.
\end{equation}
\noindent Case 2. $x_1=y_1=0$. This is the case of generic $\lambda$, where
$\dim({\mathscr S}_\lambda)=m_1$, so that an irreducible component ${\mathcal V}$
of $W_k\cap (\cup_{\lambda\in{\mathcal G}}{\mathscr S}_\lambda)$,
where ${\mathcal G}$ is the Zariski open set of ${\mathbb C}P^k$ of generic
$\lambda$, will satisfy
\begin{equation}\label{v}
\dim({\mathcal V})\leq m_1+k.
\end{equation}
On the other hand, since $W_k$ is cut out by $k+1$ equations, we have
\begin{equation}\label{lower-bound}
\dim(W_k)\geq m_1+2m_2-k-1.
\end{equation}

\begin{lemma}\label{lm1} When $(m_1,m_2)=(4,5)$ $($respectively, $(m_1,m_2)=(6,9)$$)$
and $j\geq 2$, there holds in
equation~\eqref{W} the estimate
\begin{equation}\label{wk}
\dim({\mathcal W}_j)\leq \dim(W_k)-2
\end{equation}
for $k\leq m_1-1=3$ $($respectively, $k\leq 5$$)$.
\end{lemma}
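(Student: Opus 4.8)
The plan is to compare the generic upper bound~\eqref{W} for $\dim(\mathcal W_j)$ with the generic lower bound~\eqref{lower-bound} for $\dim(W_k)$, and show that when $j\geq 2$ the former is smaller by at least $2$. Substituting the concrete multiplicities, inequality~\eqref{W} gives $\dim(\mathcal W_j)\leq m_1+m_2+k-2-j$, while~\eqref{lower-bound} gives $\dim(W_k)\geq m_1+2m_2-k-1$. Thus it suffices to check
$$
m_1+m_2+k-2-j\leq m_1+2m_2-k-1-2,
$$
that is, $2k+1\leq m_2+j$, which for $j\geq 2$ is guaranteed as soon as $2k-1\leq m_2$. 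For $(m_1,m_2)=(4,5)$ and $k\leq 3$ we have $2k-1\leq 5=m_2$, and for $(m_1,m_2)=(6,9)$ and $k\leq 5$ we have $2k-1\leq 9=m_2$, so the desired estimate follows in both cases. So the first, purely arithmetic, step is to record this inequality and note that it already closes the case $j\geq 2$ in the stated range of $k$.

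The subtlety — and where I expect the real work to lie — is that~\eqref{lower-bound} is only a lower bound for $\dim(W_k)$, whereas~\eqref{wk} asserts an inequality against the \emph{actual} dimension of $W_k$, and $W_k$ could conceivably have dimension strictly larger than $m_1+2m_2-k-1$. So the genuine content is to argue that the relevant component $\mathcal W_j$ is compared against the correct component of $W_k$. The way to handle this is inductive on $k$: by Proposition~\ref{nullstellensatz} together with the codimension~$2$ estimates already assembled (Case~2 via~\eqref{v}, plus the $j=0,1$ strata of Case~1 handled separately — these are exactly the cases $j\leq 1$ excluded from this lemma), one shows that $p_0,\dots,p_{k}$ form a regular sequence for the relevant $k$, hence $W_k$ is a complete intersection and $\dim(W_k)=m_1+2m_2-k-1$ exactly. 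Once that equality is in hand, the arithmetic of the first paragraph yields~\eqref{wk} immediately. Thus the logical order is: (i) prove the arithmetic inequality $2k+1\leq m_2+j$ for the stated ranges; (ii) invoke the earlier estimates and Proposition~\ref{nullstellensatz} to conclude $W_k$ has the expected dimension; (iii) combine.

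The main obstacle I anticipate is keeping the bookkeeping of $r_\lambda=j$ straight across the stratification $\mathcal Q=\bigcup_j\mathcal L_j$, specifically verifying that for $j\geq 2$ the rank $r_\lambda$ of the block $B$ really does force the dimension drop claimed in~\eqref{sub} uniformly over $\mathcal L_j$ — i.e. that the cut by $p_0^*$ genuinely reduces dimension by $1$ on every component, using that when $x_1,y_1$ are nonzero the substitution leaves a nondegenerate quadratic form $(x_1)^2+\cdots+(x_{m_2-r_\lambda})^2+(z\text{ terms})$, which is not identically zero precisely because $m_2-r_\lambda\geq 1$; here one needs $r_\lambda\leq m_2-1$, which holds since $B$ maps the $m_1$-dimensional $E_0$ into $E_1$ and $r_\lambda\leq m_1<m_2$ in both multiplicity pairs. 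With that in place the inequality chain is routine, and no further geometry of the isoparametric hypersurface is needed for this particular lemma — the heavier geometric input (Condition~A, the rank-one property of the $B_i$, etc.) enters only in the later sections where the $j\leq 1$ strata and the passage from the second fundamental form to the classification are treated.
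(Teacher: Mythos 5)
Your first paragraph is already, in substance, the paper's entire proof: one compares the upper bound~\eqref{W} with the lower bound~\eqref{lower-bound} and checks the arithmetic $m_1+m_2+k-2-j\leq m_1+2m_2-k-3$, i.e. $m_2\geq 2k+1-j$, which holds for $j\geq 2$ in the stated ranges of $k$. (The paper also records the companion inequality $m_1+k\leq m_1+2m_2-k-3$ coming from~\eqref{v} for the generic stratum, which always holds.)

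The ``subtlety'' you then raise is not one, and the remedy you propose for it would fail. The quantity $\dim(W_k)$ sits on the \emph{larger} side of~\eqref{wk}, so a lower bound on it is exactly the right direction: if $\dim({\mathcal W}_j)\leq m_1+2m_2-k-3$ and $\dim(W_k)\geq m_1+2m_2-k-1$, then $\dim({\mathcal W}_j)\leq\dim(W_k)-2$ no matter how large $\dim(W_k)$ actually is; excess dimension of $W_k$ only makes the inequality easier. Your step (ii) --- invoking Proposition~\ref{nullstellensatz} to conclude that $p_0,\dots,p_k$ form a regular sequence and hence that $\dim(W_k)$ equals $m_1+2m_2-k-1$ exactly --- is therefore unnecessary, and moreover it is circular and in fact unavailable: Proposition~\ref{nullstellensatz} requires the codimension-2 estimate on \emph{every} stratum, including $j\leq 1$, which is precisely what is not yet controlled at this stage; and the Corollary following Theorem~\ref{THM} states that for $(4,5)$ and $(6,9)$ the $p_i$ do \emph{not} in general form a regular sequence --- the whole strategy of the section is to derive a contradiction from the assumption that all the codimension-2 estimates go through. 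Your closing observation that $m_2-r_\lambda\geq 1$ (since $r_\lambda\leq m_1<m_2$) is correct, but it belongs to the derivation of~\eqref{sub}, which precedes the lemma and is taken as given here.
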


\begin{proof} For~\eqref{wk} to be true, we must have both 
\begin{eqnarray}\nonumber
\aligned
m_1+m_2+k-2-j&\leq m_1+2m_2-k-3,\\
m_1+k&\leq m_1+2m_2-k-3
\endaligned
\end{eqnarray}
by~\eqref{W},~\eqref{v} and~\eqref{lower-bound}. The second inequality is $2m_2\geq 2k+3$, which is always true, while the
first is $m_2\geq 2k+1-j$, which is true if $j\geq 2$.
\end{proof}

\begin{remark}\label{rk1} 
In view of the proof of Lemma~\ref{lm1}, the codimension 2
estimate for
the case of generic $\lambda\in{\mathcal G}$ always holds true. Henceforth, we may ignore
this case and consider only the nongeneric case where $\tau_\lambda=\pm\sqrt{-1}$.

Observe that if we write $(c_0,\cdots,c_k)=\alpha+\sqrt{-1}\beta$ where
$\alpha$ and $\beta$ are
real vectors, then $\tau_\lambda=\pm\sqrt{-1}$ is equivalent to the conditions that
$<\alpha,\beta>=0$ and $|\alpha|^2=|\beta|^2$. That is, the nongeneric
$\lambda$ in~\eqref{lda}
is the hyperquadric ${\mathcal Q}$ in ${\mathbb C}P^k$.
\end{remark}

\begin{lemma}\label{LM} Suppose $(m_1,m_2)=(4,5)$ or $(6,9)$, and in the latter case
suppose the isoparametric hypersurface is not the inhomogeneous one constructed by Ferus, Karcher and M\"{u}nzner.
Then $r_\lambda\leq 1$ for all $\lambda$ in
${\mathcal Q}$.
\end{lemma}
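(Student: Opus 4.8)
The plan is to show that the rank $r_\lambda$ of the matrix $B$ in~\eqref{BC} associated with a point $\lambda\in{\mathcal Q}$ can never exceed $1$, by combining the codimension-$2$ machinery of Lemma~\ref{lm1} with the geometry of $B$ coming from the duality~\eqref{duality}. First I would record what $r_\lambda\geq 2$ would force: by the dimension count~\eqref{W}, an irreducible component ${\mathcal W}_j$ of $W_k\cap\bigl(\cup_{\lambda\in{\mathcal L}_j}{\mathscr S}_\lambda\bigr)$ with $j=r_\lambda\geq 2$ satisfies $\dim({\mathcal W}_j)\leq m_1+m_2+k-2-j$, and Lemma~\ref{lm1} tells us this is $\leq\dim(W_k)-2$ for all $k\leq m_1-1$. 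So if $r_\lambda\geq 2$ could occur, the codimension-$2$ estimate~\eqref{prime} would hold for $k\le m_1-1$ exactly as in the case $m_2\ge 2m_1-1$; by Proposition~\ref{nullstellensatz} the $p_0,\dots,p_{m_1}$ would form a regular sequence, and then Theorem~\ref{THM} would force $M$ to be of OT-FKM type with the Clifford action on $M_+$. For $(m_1,m_2)=(4,5)$ this is outright impossible, since such a hypersurface is never of OT-FKM type; for $(6,9)$ the OT-FKM structure with Clifford action on $M_+$ is, up to congruence, the inhomogeneous Ferus--Karcher--M\"unzner example, which is excluded by hypothesis. That is the core of the argument.

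The subtlety is that $r_\lambda\geq 2$ is not literally assumed to hold for \emph{all} $\lambda\in{\mathcal Q}$, only at some point, so I would first reduce to a uniform statement. The rank $r_\lambda$ is an upper-semicontinuous function of $\lambda$, and the stratification of ${\mathcal Q}$ into the loci ${\mathcal L}_j$ used in~\eqref{W} is by constructible sets; the estimate~\eqref{W} on a component ${\mathcal W}_j$ uses only the value of $j=r_\lambda$ generic on that stratum together with the bound~\eqref{sub} on each individual ${\mathscr S}_\lambda$, so the presence of even one stratum ${\mathcal L}_j$ with $j\ge 2$ (equivalently, one $\lambda\in{\mathcal Q}$ with $r_\lambda\ge 2$) is what must be ruled out in order to guarantee~\eqref{prime}. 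Thus I would argue contrapositively: assume some $\lambda_0\in{\mathcal Q}$ has $r_{\lambda_0}\ge 2$; then by Lemma~\ref{lm1} every relevant component ${\mathcal W}_j$ still has codimension $\ge 2$ (the bound in~\eqref{wk} holds for \emph{all} $j\geq 2$, not just the maximal one), while the generic case~\eqref{v} and Remark~\ref{rk1} already give codimension $\ge 2$; so~\eqref{prime} holds throughout, forcing the OT-FKM/regular-sequence conclusion and the contradiction above.

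A point that needs care is the lower range of $k$: for $k=0$ the variety $W_0$ is a single quadric hypersurface and ${\mathcal Q}\subset{\mathbb C}P^0$ is empty, so there is nothing to check; the first genuine case is $k=1$, where ${\mathcal Q}$ is a point or a conic, and one must make sure the stratification argument and the bounds~\eqref{W},~\eqref{v},~\eqref{lower-bound} are valid there — they are, since their derivation did not use $k\ge 2$. I would also note that in~\eqref{estimate} the exceptional values $\tau_\lambda=\pm f_i\sqrt{-1}$ do not interfere: on ${\mathcal Q}$ we have $\tau_\lambda=\pm\sqrt{-1}$ by Remark~\ref{rk1}, so $\tau_\lambda=\pm f_i\sqrt{-1}$ with $0<f_i<1$ is impossible, and the clean solution $x_2=-y_2$, $z_2$ linear in $x_2$, $y_1=\pm\sqrt{-1}x_1$ of~\eqref{estimate} applies, validating~\eqref{EST} and hence~\eqref{sub} on all of ${\mathcal Q}$.

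The main obstacle I anticipate is not in the dimension bookkeeping — that is essentially packaged in Lemmas~\ref{lm1} and the estimates already derived — but in making the logical reduction airtight: one has to be certain that the existence of a single high-rank $\lambda$ really does propagate, through the covering~\eqref{union} of $U_k$ by the ${\mathscr S}_\lambda$ and the slicing by $W_k$, to a full verification of~\eqref{prime} for every $k\le m_1-1$, so that Proposition~\ref{nullstellensatz} genuinely applies and Theorem~\ref{THM} can be invoked. In other words, the delicate part is confirming that the codimension-$2$ estimate is \emph{equivalent} to the vanishing of all strata ${\mathcal L}_j$, $j\ge 2$, up to the harmless generic stratum; once that equivalence is in hand, the classification inputs (non-existence of OT-FKM in the $(4,5)$ case, identification of the $M_+$-Clifford OT-FKM example with the FKM inhomogeneous one in the $(6,9)$ case) close the argument immediately.
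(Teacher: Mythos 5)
Your reduction is incomplete in exactly the place where the paper's proof does its real work. Assuming some $\lambda_0\in{\mathcal Q}$ has $r_{\lambda_0}\ge 2$, you claim that~\eqref{prime} then ``holds throughout'' because Lemma~\ref{lm1} covers the strata with $j\ge 2$ and Remark~\ref{rk1} covers generic $\lambda\in{\mathcal G}$. But this leaves uncovered the strata ${\mathcal L}_0$ and ${\mathcal L}_1$ inside ${\mathcal Q}$, i.e.\ those nongeneric $\lambda$ with $r_\lambda=0$ or $1$, which need not be empty under your hypothesis. For these the bound~\eqref{W} gives only $\dim({\mathcal W}_j)\le m_1+m_2+k-2-j$, and the inequality $m_2\ge 2k+1-j$ required in the proof of Lemma~\ref{lm1} fails for $j=0,1$ at the top value of $k$ (e.g.\ $(m_1,m_2)=(4,5)$, $k=3$, $j=0$ would need $5\ge 7$). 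Your closing paragraph has the logic inverted: the high-rank strata are the harmless ones, and it is precisely the low-rank strata that threaten the codimension-two estimate.

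The paper closes this gap with two additional arguments that your proposal does not contain. First, since rank is lower semicontinuous and ${\mathcal Q}$ is irreducible, the contradiction hypothesis forces $r_\lambda\ge 2$ for \emph{generic} $\lambda\in{\mathcal Q}$, so ${\mathcal L}_0$ and ${\mathcal L}_1$ are proper subvarieties of ${\mathcal Q}$ of dimension at most $k-2$ rather than $k-1$; this improved count alone yields~\eqref{WW} and settles ${\mathcal L}_1$. Second, for ${\mathcal L}_0$ even this is not enough, and one must cut one further dimension: because $(m_1,m_2)=(4,5)$ (resp.\ $(6,9)$) admits no points of Condition A on $M_{+}$, some pair $(B^*_a,C^*_a)$ with $a\ge 2$ is nonzero, so $p^*_2=0$ contributes a nontrivial linear condition in the free variable $z$ on the variety~\eqref{free}, giving~\eqref{EQ2} and hence~\eqref{UB}. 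Without these two steps the chain Proposition~\ref{nullstellensatz} $\Rightarrow$ regular sequence $\Rightarrow$ Theorem~\ref{THM} cannot be invoked, so your contradiction is not yet established. (Your observations about semicontinuity of $r_\lambda$, about $\tau_\lambda=\pm f_i\sqrt{-1}$ being excluded on ${\mathcal Q}$, and about the endgame identification of the OT-FKM example with Clifford action on $M_{+}$ are all correct and consistent with the paper.)
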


\begin{proof} Suppose the contrary. Generic $\lambda$ in ${\mathcal Q}$
would have $r_\lambda\geq 2$.

We will only consider the $(4,5)$ case; the other case is verbatim.
The multiplicity pair $(4,5)$
cannot allow any points of Condition A on
$M_{+}$.  Hence,
one of the four pairs of
matrices $(B_1,C_1),(B_2,C_2),(B_3,C_3)$ and $(B_4,C_4)$ of the shape
operators $S_{n_1},S_{n_2},S_{n_3}$ and $S_{n_4}$, similar to the one given
in~\eqref{matrix}, must be nonzero; we may assume
one of $(B_1,C_1),(B_2,C_2)$ and $(B_3,C_3)$ is nonzero in the neighborhood of a given
point, over which generic $\lambda\in{\mathcal Q}$ have $r_\lambda\geq 2$.

Firstly, Lemma~\ref{lm1} would reduce the proof to considering
$r_{\lambda}\leq 1$.

\noindent Case 1. On ${\mathcal L}_1$ where $r_\lambda=1$: The codimension 2 estimate would still go through.
This is because~\eqref{W} is now replaced by ($j=1$)

\begin{equation}\label{WW}
\dim({\mathcal W}_j)\leq m_1+m_2+k-3-j=m_1+m_2+k-4
\end{equation}
due to the fact that such nongeneric $\lambda$ in ${\mathcal Q}$
constitute a subvariety of ${\mathcal Q}$ of dimension at most $k-2$.

\noindent Case 2. On ${\mathcal L}_0$ where $r_\lambda=0$: Now
$$
\dim({\mathcal W}_j)\leq m_1+m_2+k-3
$$
with $j=0$. We need to cut back one
more dimension to make~\eqref{WW} valid.
Since $r_\lambda=0$, we see $B^*_1=C^*_1=0$ and $A^*=I$ in~\eqref{matrix} for
$S_{n^*_1}$.
It follows that $p^*_0=0$ and $p^*_1=0$ cut ${\mathscr S}_\lambda$ in the
variety
\begin{equation}\label{free}
\{(x,\pm \sqrt{-1}x,z):\sum_\alpha (x_\alpha)^2=0\}.
\end{equation}
$(B^*_2,C^*_2)$ or $(B_3^*,C_3^*)$ of $S_{n^*_2}$ or $S_{n^*_3}$ must be
nonzero now; we may assume it is the former.
Since $z$ is a free variable in~\eqref{free},
$p^*_2=0$ will have nontrivial $z$-terms
\begin{eqnarray}\label{p2}
\aligned
0=p^*_2&=\sum_{\alpha p}S_{\alpha p}x_\alpha z_p+\sum_{\mu p}T_{\mu p}y_\mu z_p+x_\alpha y_\mu\;\text{terms}\\
&=\sum_{\alpha p}(S_{\alpha p}\pm\sqrt{-1}T_{\alpha p})x_\alpha z_p+x_\alpha x_\mu\;\text{terms},
\endaligned
\end{eqnarray}
taking $y=\pm\sqrt{-1}x$ into account, where $S_{\alpha p}:=<S(X^*_\alpha,Z^*_p),n^*_2>$
and $T_{\mu p}:=<S(Y^*_\mu,Z^*_p),n^*_2>$ are (real) entries of $B^*_2$ and $C^*_2$,
respectively, and $X^*_{\alpha},1\leq\alpha\leq m_2$,
$Y^*_{\mu},1\leq\mu\leq m_2$ and $Z^*_{p},1\leq p\leq m_1$, are orthonormal
eigenvectors for the eigenspaces of $S_{n^*_0}$ with eigenvalues $1,-1,$ and $0$,
respectively; hence the dimension
of ${\mathscr S}_\lambda$ will be cut down by 2 by $p^*_0,p^*_1,p^*_2=0$, so that again
\begin{equation}\label{EQ2}
\dim(W_{2}\cap{\mathscr S}_\lambda)\leq m_1+m_2-2,
\end{equation} 
noting that $p^*_0,p^*_1,p^*_2=0$ also cut out $W_2$.
In conclusion, we deduce
\begin{equation}\label{UB}
\dim({\mathcal W}_j)\leq\dim(W_{k}\cap{\mathscr S}_\lambda)+k-2\leq
m_1+m_2+k-4,
\end{equation}
so that the codimension 2 estimate would also go through. In conclusion,
we obtain that~\eqref{prime} holds true.

However, the validity of~\eqref{prime} would imply that the
isoparametric hypersurface is of OT-FKM type by Proposition~\ref{nullstellensatz}
and Theorem~\ref{THM}, which is absurd in the $(4,5)$ case.

In the $(6,9)$ case, the same arguments as above imply that
the isoparametric hypersurface is the inhomogeneous one constructed by Ferus, Karcher and M\"{u}nzner
since the Clifford action is on $M_{+}$,
contradicting the assumption. The lemma is proven.
\end{proof}

\begin{lemma}\label{LM1} Suppose $(m_1,m_2)=(4,5)$ or $(6,9)$, and in the latter case
suppose the isoparametric hypersurface is not the inhomogeneous one constructed by Ferus, Karcher and M\"{u}nzner.
Then $r_\lambda=1$ for generic $\lambda$ in ${\mathcal Q}$.
\end{lemma}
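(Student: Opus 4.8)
The plan is to argue by contradiction: suppose that $r_\lambda=0$ for generic $\lambda$ in $\mathcal{Q}$, i.e. that $B=C=0$ (equivalently, $A=I$) for generic $\lambda\in\mathcal{Q}$. By Lemma~\ref{LM} we already know $r_\lambda\leq 1$ on all of $\mathcal{Q}$, so the dichotomy is exactly $r_\lambda=0$ (generic) versus $r_\lambda=1$ (on a proper subvariety). The first step is to translate the hypothesis ``$r_\lambda=0$ for generic $\lambda\in\mathcal{Q}$'' into a statement about the original blocks $B_a,C_a$ of the shape operators $S_{n_a}$: after the normal-basis change of Remark~\ref{rk0}, $r_\lambda$ is the rank of the $B$-block of the pencil $S_{n_1^*}-\tau_\lambda S_{n_0^*}$, and $\tau_\lambda=\pm\sqrt{-1}$ on $\mathcal{Q}$. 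I would spell out that $r_\lambda=0$ generically forces a strong vanishing condition on the $B_a$'s — essentially that for a generic complex combination $\sum c_i n_i$ with $\langle\alpha,\beta\rangle=0,\ |\alpha|^2=|\beta|^2$, the corresponding $B$-block is zero. This is very close to saying that $M_+$ has a point of Condition A, which is exactly what must be ruled out.

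The second step is the contradiction itself. In the $(4,5)$ case, a point of Condition A on $M_+$ is impossible (this is classical: the multiplicity pair $(4,5)$ admits no Condition A points; it was used already in the proof of Lemma~\ref{LM}), so $r_\lambda=0$ generically is absurd. In the $(6,9)$ case one has to work slightly harder, because Condition A points \emph{can} occur there, but they force the isoparametric hypersurface to be of OT-FKM type with the Clifford action on $M_+$ — which, as noted in the proof of Lemma~\ref{LM} and in the Corollary to Theorem~\ref{THM}, would make it congruent to the inhomogeneous Ferus--Karcher--M\"unzner example, contradicting the standing assumption. So in both cases the generic value of $r_\lambda$ on $\mathcal{Q}$ cannot be $0$; combined with $r_\lambda\leq 1$ from Lemma~\ref{LM}, this gives $r_\lambda=1$ generically.

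I would present this as: assume $r_\lambda\le 1$ everywhere on $\mathcal{Q}$ (Lemma~\ref{LM}) but $r_\lambda=0$ on a Zariski-dense subset of $\mathcal{Q}$. Then for every $\lambda$ in that dense set, the $B$-block (hence $C$-block, since $B_\alpha=C_\alpha$ here) of $S_{n_1^*}$ relative to the eigenspaces of $S_{n_0^*}$ vanishes, with $n_0^*,n_1^*$ spanning the real and imaginary parts of $\sum c_i n_i$. Varying $\lambda$ over $\mathcal{Q}$ and using that the $B_a,C_a$ depend real-analytically (indeed polynomially) on the data, one deduces that every unit normal $n$ in an appropriate real subspace has $B_n=C_n=0$ in a suitable frame — i.e. Condition A holds at the base point. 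Invoking the non-existence of Condition A points (case $(4,5)$) or Theorem~\ref{THM} together with the Corollary (case $(6,9)$) yields the contradiction.

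The main obstacle I anticipate is the bookkeeping in the second step of the first paragraph: carefully identifying which real combinations $n=\alpha+\sqrt{-1}\beta\in\mathcal{Q}$ one can reach, and verifying that the vanishing of the $B$-block of the \emph{pencil} $S_{n_1^*}-\tau_\lambda S_{n_0^*}$ at $\tau_\lambda=\pm\sqrt{-1}$ genuinely forces the $B$-blocks of the individual operators $S_{n_0^*},S_{n_1^*}$ (and hence Condition A) rather than merely a cancellation between them. One must track the normalizations in~\eqref{BC}, \eqref{A}, \eqref{estimate} and the Gram--Schmidt convention of Remark~\ref{rk0} to see that $r_\lambda=0$ really does pin down $B=0$ outright, not just $\operatorname{rank}(B-\tau_\lambda\cdot 0)=0$; once that identification is secure, the Condition~A contradiction is immediate from the results already cited.
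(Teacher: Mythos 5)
Your proposal is correct and follows essentially the same route as the paper: assume $r_\lambda=0$ generically (hence everywhere on $\mathcal Q$, given $r_\lambda\le 1$ from Lemma~\ref{LM}), deduce $B_a=C_a=0$ for all $a$, and contradict the impossibility of Condition~A (resp.\ the exclusion of the inhomogeneous FKM example). The bookkeeping issue you flag is resolved in the paper exactly as you anticipate, by choosing $\lambda=[1:\sqrt{-1}:0:\cdots:0]$ and its analogues, for which the Gram--Schmidt convention of Remark~\ref{rk0} gives $n_0^*=n_0$, $n_1^*=n_a$, so $r_\lambda=0$ directly forces $B_a=0$.
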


\begin{proof} We consider the $(4,5)$ case; the other case is verbatim.
Suppose the contrary. then $r_\lambda=0$ for all $\lambda$ in ${\mathcal Q}$.
It would follow that $B_1$ of $S_{n_1}$ is identically zero by considering
$\lambda=[1:\sqrt{-1}:0:0:0]$, because then $B_0^*$ and $B_1^*$ associated with
$S_{n_0^*}$ and $S_{n_1^*}$ are zero. Likewise, $B_a=0$ for all $1\leq a\leq 4.$
However, this would imply that the isoparametric hypersurface is of Condition A.
This is impossible.
\end{proof}

\begin{lemma} Suppose $(m_1,m_2)=(4,5)$ or $(6,9)$, and in the latter case
suppose the isoparametric hypersurface is not the inhomogeneous one constructed by Ferus, Karcher and M\"{u}nzner.
Then $r_\lambda=1$ for all $\lambda$ in ${\mathcal Q}$.
\end{lemma}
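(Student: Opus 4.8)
The plan is to upgrade the ``generic $\lambda$'' statement of Lemma~\ref{LM1} to an ``all $\lambda$'' statement by a semicontinuity/closedness argument combined with the already-established upper bound $r_\lambda\le 1$ of Lemma~\ref{LM}. First I would observe that the rank function $\lambda\mapsto r_\lambda$ is lower semicontinuous on $\mathcal{Q}$ in the sense that the locus where the rank of $B$ (equivalently, of the off-diagonal block governing the map $E_0\to E_1$ for the pencil member $c_0S_{n_0}+\cdots+c_kS_{n_k}$) drops below its generic value is Zariski closed; combined with Lemma~\ref{LM}, the only possibility for a non-generic $\lambda\in\mathcal{Q}$ is $r_\lambda=0$. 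So the statement to be proved is exactly: there is no $\lambda\in\mathcal{Q}$ with $r_\lambda=0$.

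Next I would analyze what $r_\lambda=0$ forces. As in the proof of Lemma~\ref{LM1}, $r_\lambda=0$ at a point $\lambda=\alpha+\sqrt{-1}\beta\in\mathcal{Q}$ means that, after the Gram--Schmidt normalization of Remark~\ref{rk0} producing $n_0^*,n_1^*$, both $B_0^*=C_0^*$ and $B_1^*=C_1^*$ vanish; in particular the pair $(B^*,C^*)$ of the combined operator $S_{n_1^*}-\tau_\lambda S_{n_0^*}$ is zero. The key is that this is a strong pointwise condition, but now we cannot simply let $\lambda$ range over all of $\mathbb{C}P^k$ (that would give Condition A, which we used up in Lemma~\ref{LM1}). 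Instead I would argue that the set $Z:=\{\lambda\in\mathcal{Q}: r_\lambda=0\}$ is closed, and by Lemma~\ref{LM1} it is a proper closed subset; then I would revisit the codimension~2 dimension count, now applied only to the sub-locus of $W_k\cap(\cup_{\lambda\in Z}{\mathscr S}_\lambda)$. Over $Z$, the argument of Case~2 in the proof of Lemma~\ref{LM} (cutting ${\mathscr S}_\lambda$ by $p_0^*,p_1^*$ and then exploiting the free $z$-variable against a nonzero $(B_a^*,C_a^*)$ for some $a\ge 2$, via~\eqref{p2}) still cuts down the dimension by the full amount; since $Z$ has dimension $\le k-1$ (it is proper closed in the $k$-dimensional $\mathcal{Q}$, in fact I expect $\le k-2$ once one uses that the conditions $B_0^*=B_1^*=0$ are several independent equations), the contribution of the $r_\lambda=0$ stratum to $W_k\cap U_k$ satisfies the codimension~2 bound~\eqref{prime}. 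Combined with Remark~\ref{rk1} (generic $\lambda$ always fine) and the $r_\lambda=1$ stratum from~\eqref{WW}, this would restore~\eqref{prime} in full, forcing the hypersurface to be of OT-FKM type — absurd in the $(4,5)$ case and, in the $(6,9)$ case, forcing it to be the inhomogeneous Ferus--Karcher--M\"unzner example, contrary to hypothesis. Hence $Z=\emptyset$, i.e. $r_\lambda=1$ for all $\lambda\in\mathcal{Q}$.

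The main obstacle I anticipate is the honest bookkeeping of the dimension of $Z$ and the verification that over $Z$ one genuinely recovers the extra dimension cut. One has to be careful that when $\lambda$ varies in $Z$ the operator $S_{n_2^*}$ (or $S_{n_3^*}$) whose block $(B_a^*,C_a^*)$ is used in~\eqref{p2} can be chosen \emph{uniformly nonzero} on a Zariski-dense open subset of an irreducible component of $Z$ — otherwise $B_a=0$ for all $a$ along that component and we are back in Condition~A, which again is excluded, so the argument still closes, but this case split must be made explicit. A secondary subtlety is ensuring that the Gram--Schmidt frame $n_0^*,n_1^*$ of Remark~\ref{rk0} depends algebraically on $\lambda$ over the relevant strata so that ``$Z$ is closed'' and the fibered dimension estimates are legitimate; this is routine but should be noted. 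Once these points are in place the conclusion is immediate from Proposition~\ref{nullstellensatz}, Theorem~\ref{THM}, and the incongruence of the two OT-FKM realizations, exactly as at the end of the proof of Lemma~\ref{LM}.
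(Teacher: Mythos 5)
Your reduction of the problem to excluding $r_\lambda=0$ is fine, but the way you propose to reach a contradiction cannot work. You aim to restore the full codimension-two estimate \eqref{prime}, and that estimate genuinely fails for the hypersurfaces under consideration once Lemma~\ref{LM1} is in force: the stratum $\mathcal{L}_1$ where $r_\lambda=1$ is now Zariski dense in $\mathcal{Q}$, hence of dimension $k-1$ rather than $\le k-2$, so by \eqref{sub} and \eqref{W} its contribution is only $\dim(\mathcal{W}_1)\le (m_1+m_2-2)+(k-1)=m_1+m_2+k-3$. Comparing with \eqref{lower-bound}, the requirement $\dim(\mathcal{W}_1)\le\dim(W_k)-2$ becomes $2k\le m_2$, which fails at $k=m_1-1$, i.e.\ at $k=3$ for $(4,5)$ and $k=5$ for $(6,9)$. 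The bound \eqref{WW} that you invoke for this stratum was derived inside the reductio of Lemma~\ref{LM}, where the hypothesis that generic $\lambda$ has $r_\lambda\ge 2$ made $\mathcal{L}_1$ a \emph{proper} subvariety of $\mathcal{Q}$; that hypothesis is exactly what is no longer available. This is not a repairable bookkeeping issue: if \eqref{prime} could be restored, Proposition~\ref{nullstellensatz} and Theorem~\ref{THM} would force the hypersurface to be of OT-FKM type with Clifford action on $M_+$, which is false for $(4,5)$ and excluded by hypothesis for $(6,9)$; so the codimension-two estimate must fail somewhere, and it fails on the dense $r_\lambda=1$ stratum independently of whether the set $Z=\{\lambda: r_\lambda=0\}$ is empty. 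Showing that $Z$ contributes nothing therefore yields no contradiction. (A secondary slip: $\mathcal{Q}$ has dimension $k-1$, not $k$, so properness gives $\dim Z\le k-2$; you hedge on this, and it is the only reading under which even your $Z$-stratum count closes.)

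The paper's proof avoids the dimension count entirely and uses semicontinuity of a different rank. If some $\lambda_0\in\mathcal{Q}$ had $r_{\lambda_0}=0$, then by \eqref{A} the block $A$ in \eqref{matrix} would be the identity, of full rank $m_2$; lower semicontinuity of rank then forces $A$ to have full rank for generic $\lambda\in\mathcal{Q}$. But Lemma~\ref{LM1} says generic $\lambda\in\mathcal{Q}$ has $r_\lambda=1$, for which \eqref{A} gives $A=\operatorname{diag}(I,\Delta_1)$ with $\Delta_1=0$, hence rank $m_2-1$. This contradiction shows $Z=\emptyset$, and Lemma~\ref{LM} finishes the proof. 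That rank comparison on the $A$-block is the missing idea; the stratified dimension count cannot be made to deliver this lemma.
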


\begin{proof} For a $\lambda$ with $r_\lambda=0$ we have $A$ in~\eqref{matrix} is the
identity matrix by~\eqref{A}, so that its rank is full (=5 or 9). It follows that generic $\lambda$
in ${\mathcal Q}$ will have the same full rank property. However, for a $\lambda$
with $r_\lambda=1$, the structure of $A$ in~\eqref{A} implies that
$\Delta=0$ so that
such $A$, which are also generic, will be of rank 4 or 8. This is a contradiction.
\end{proof}

\begin{lemma}\label{lm5} Suppose $(m_1,m_2)=(4,5)$ or $(6,9)$, and in the latter case
suppose the isoparametric hypersurface is not the inhomogeneous one constructed by Ferus, Karcher and M\"{u}nzner.
Then up to an orthonormal frame change, the only nonzero row
of the $5$-by-$4$ (vs. $9$-by-$6$) matrices $B_a,1\leq a\leq 4,$ (vs. $3\leq a\leq 8$)
of $S_{n_a}$ is the last row.
\end{lemma}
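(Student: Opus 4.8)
My plan is to read the equality $r_\lambda=1$ on all of ${\mathcal Q}$, just proved in the preceding lemmas, as a rank constraint on the $B_a$, and then to discard the one undesired alternative. First I would specialize to $\lambda=[1:\sqrt{-1}t_1:\cdots:\sqrt{-1}t_{m_1}]$ with $(t_1,\dots,t_{m_1})$ real and not all zero. Writing $(c_0,\dots,c_{m_1})=\alpha+\sqrt{-1}\beta$, the vector $\alpha$ points along $n_0$ and $\beta$ along $\sum_at_an_a$, so $\langle\alpha,\beta\rangle=0$ and $|\alpha|=|\beta|$; by Remark~\ref{rk1} such $\lambda$ lie on ${\mathcal Q}$, and by Remark~\ref{rk0} the Gram--Schmidt step gives $n_0^*=n_0$ and $n_1^*=\sum_at_an_a/|\sum_at_an_a|$. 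Hence the matrices $A,B,C$ of~\eqref{matrix} are, up to the orthonormal change inside $E_0,E_{1},E_{-1}$ that produces~\eqref{BC}--\eqref{A} (which does not change ranks), the blocks $\sum_at_aA_a$, $\sum_at_aB_a$, $\sum_at_aC_a$ of $S_{n_1^*}=\sum_at_aS_a$ read off from~\eqref{0a}; thus $r_\lambda$ is the rank of $\sum_at_aB_a$. Since also $B_0=0$, this says exactly that ${\mathcal B}:={\rm span}_{\mathbb R}\{B_1,\dots,B_{m_1}\}$ is a linear space of $m_2$-by-$m_1$ real matrices every nonzero member of which has rank one.

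Next I would use the elementary structure of such a space: if $ab^{tr}$ and $cd^{tr}$ are rank one and $ab^{tr}+cd^{tr}$ has rank $\le1$, then $a\parallel c$ or $b\parallel d$, and bootstrapping this over a spanning set forces ${\mathcal B}$ to be either (i) \emph{column concentrated}, so that the image of every $B\in{\mathcal B}$ lies in one fixed line $\ell\subset E_{1}$, or (ii) \emph{row concentrated}, so that there is a fixed unit $u\in E_0$ with $B_a=v_au^{tr}$ for suitable $v_a\in E_{1}$. In case (i) I choose an orthonormal basis of $E_{1}$ whose last vector spans $\ell$; relative to it each $B_a$ has only its last row nonzero, which is precisely the assertion of the lemma (the $B_a$ being $5$-by-$4$ in the $(4,5)$ case and $9$-by-$6$ in the $(6,9)$ case). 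So the whole lemma reduces to ruling out case (ii).

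To rule out (ii), pick an orthonormal basis $e_1,\dots,e_{m_1}$ of $E_0$ with $e_{m_1}=u$, so $S^a_{p\alpha}=(B_a)_{\alpha p}=0$ for every $p\le m_1-1$, every $a$ (automatic when $a=0$), and every $\alpha$. Feeding $q=p\le m_1-1$, $\beta=\alpha$ into~\eqref{mirror} gives $\sum_\mu(S^p_{\alpha\mu})^2=1$ for each $\alpha$, while feeding the same indices into the twin of~\eqref{mirror} obtained by interchanging the $\alpha$ and $\mu$ ranges gives $2\sum_a(S^a_{p\mu})^2+\sum_\alpha(S^p_{\alpha\mu})^2=1$ for each $\mu$; summing the first over $\alpha$, the second over $\mu$, and subtracting yields $\sum_{a,\mu}(S^a_{p\mu})^2=0$. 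Hence $C_ae_p$, whose $\mu$-th component is $S^a_{p\mu}$, vanishes for all $a$ and all $p\le m_1-1$. So for $p\le m_1-1$ we have $B_ae_p=C_ae_p=0$ for all $a$ and $S_{n_0}e_p=0$ (since $e_p\in E_0$), i.e. $e_p\in\bigcap_{i=0}^{m_1}\ker S_{n_i}$, the relative nullity space of the second fundamental form of $M_{+}$ at $x$. As $m_1-1\ge3$ this space is nonzero, contradicting the fact that a focal submanifold of an isoparametric hypersurface carries a second fundamental form of everywhere trivial relative nullity (equivalently, $x$ is not a totally geodesic point), cf.~\cite{CCJ}; dually, via~\eqref{duality}, case (ii) would force every matrix $A_\alpha^*$ of $M_{-}$ to have rank $\le1$, which is impossible for the same reason. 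The $(6,9)$ case is word for word the same.

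The step I expect to be the genuine obstacle is the exclusion of case (ii): the rank-one information says nothing there by itself, and it must be fed back into the geometry. A single mirror identity only bounds $\sum_a(S^a_{p\alpha})^2$ from above; it is the \emph{simultaneous} use of both mirror identities, together with the non-degeneracy (trivial relative nullity) of the focal second fundamental form, that actually produces the contradiction. Getting the index bookkeeping in~\eqref{mirror} and its twin exactly right, and confirming that this non-degeneracy is available in the normalization at hand, are the delicate points; the reductions in the first two paragraphs are routine once one trusts the classification of rank-one matrix pencils.
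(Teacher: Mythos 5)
Your reduction in the first two paragraphs is correct and is genuinely different from the paper's argument: you convert $r_\lambda=1$ on the purely imaginary slice of ${\mathcal Q}$ into the statement that every nonzero real combination $\sum_a t_aB_a$ has rank one, invoke the classical dichotomy for linear spaces of rank-one matrices, and observe that the ``common column space'' alternative is exactly the lemma. The paper instead never leaves the algebraic-geometric machinery: it assumes some $B_2^*$ has an extra nonzero row, shows that $p_0^*=p_1^*=p_2^*=0$ then cuts ${\mathscr S}_\lambda$ down one more dimension, and concludes that the codimension-two estimate (hence the regular-sequence property, hence OT-FKM type) would go through --- a contradiction. Your derivation of $C_ae_p=0$ from the two mirror identities in the row-concentrated case is also correct.

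The genuine gap is the last step. Having placed $\mathrm{span}(e_1,\dots,e_{m_1-1})$ inside $\bigcap_{i}\ker S_{n_i}$, you dismiss this by citing ``everywhere trivial relative nullity'' of a focal submanifold, ``cf.~\cite{CCJ}.'' No such statement appears in \cite{CCJ}, and it is not a consequence of anything you have established: the only nullity-type fact available in this circle of ideas is the absence of Condition~A points (which requires \emph{all} the kernels to coincide, a strictly stronger degeneracy than a common $(m_1-1)$-dimensional nullity), and the mirror identities \eqref{mirror} alone are consistent with a nontrivial common kernel (they merely force the matrices $T^p=(S^p_{\alpha\mu})$, $p\le m_1-1$, to be orthogonal). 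Your fallback via \eqref{duality} does not close the gap either: in case (ii) each $A_\alpha^*$ does have rank $\le 1$, but the identity $A_\alpha^*(A_\alpha^*)^{tr}+2B_\alpha^*(B_\alpha^*)^{tr}=I$ is perfectly compatible with that, and turning it into a nullity statement on $M_-$ would again require the same unproved fact. The case (ii) configuration \emph{can} be excluded, but by a different mechanism: with $S^a_{p\alpha}=S^a_{p\mu}=0$ for $p,q\le m_1-1$, the identity \eqref{mirror} and its twin give $T^p(T^q)^{tr}+T^q(T^p)^{tr}=0$ and $(T^p)^{tr}T^q+(T^q)^{tr}T^p=0$ with each $T^p$ orthogonal, so the operators $(T^1)^{tr}T^q$, $2\le q\le m_1-1$, represent the Clifford algebra $C_{m_1-2}$ on ${\mathbb R}^{m_2}$; since $\delta(2)=4\nmid 5$ and $\delta(4)=8\nmid 9$, this is impossible. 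As written, however, your proof of the lemma is incomplete at precisely the step you yourself flagged as delicate.
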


\begin{proof} We will prove the $(4,5)$ case. The other case is verbatim with obvious
changes on index ranges.
For $\lambda$ in ${\mathcal Q}$, we construct $n_0^*$ and
$n_1^*$ as given in Remark~\ref{rk0} and extend them to a smooth local
orthonormal frame $n_0^*,n_1^*,\cdots,n_{m_1}^*$ such that $S_{n_0^*}$
and $S_{n_1}^*$ assume the matrix form in~\eqref{matrix},~\eqref{BC}
and~\eqref{A}. Note that $\Delta=0\,(=\Delta_1)$ in~\eqref{A} because $r_\lambda=1$;
it follows that $\sigma=1/\sqrt{2}$ in~\eqref{BC}~\cite[p 67]{CCJ}. Suppose there is
a $\lambda_0$ at which $S_{n_2^*}$ in matrix form is such that the
matrix $B_2^*$ associated with $S_{n_2^*}$ has a nonzero row other than the
last one; this property will continue to be true in a neighborhood of
$\lambda_0$. Modifying ~\eqref{free}, $p^*_0=0$ and $p^*_1=0$ now cut
${\mathscr S}_\lambda$ in the variety
\begin{equation}\label{}
\{(x_1,\cdots,x_4,\frac{t}{\sqrt{2}\tau_\lambda},
\tau_\lambda x_1,\cdots,\tau_\lambda x_4,-\frac{t}{\sqrt{2}\tau_\lambda},z_1,\cdots,z_3,t):\sum_{j=1}^4 (x_j)^2=0\}
\end{equation}
where
\begin{eqnarray}\nonumber
\aligned
x&=(x_1,x_2,x_3,x_4,x_5=t/\sqrt{2}\tau_\lambda),\\
y&=(y_1,\cdots,y_5)=(\tau_\lambda x_1,\tau_\lambda x_2,\tau_\lambda x_3,\tau_\lambda x_4,-t/\sqrt{2}\tau_\lambda)\\
z&=(z_1,z_2,z_3,z_4=t).
\endaligned
\end{eqnarray}
Meanwhile,~\eqref{p2} becomes
\begin{equation}\label{cut}
0=\sum_{\alpha=1, p=1}^{4}(S_{\alpha p}\pm\sqrt{-1}
T_{\alpha p})x_\alpha z_p\;\;(\text{mod}\;\;x_\alpha x_\mu\;\;\text{and}\;\;tz_p\;\text{terms}).
\end{equation}
The assumption that $B_2^*$ (or $C_2^*$)
assumes an
extra nonzero row other than the last one implies that one more dimension cut can be achieved
since $x_1,\cdots,x_4,z_1,\cdots,z_4$ are independent variables and~\eqref{cut}
is now nontrivial. It follows that
once more
$$
\dim(W_k\cap U_k)\leq m_1+m_2+k-4
$$
for $k\leq 3$, so that~\eqref{prime} goes through in the neighborhood
of $\lambda_0$, which is absurd as the hypersurface would be of OT-FKM type (respectively, would
be the inhomogeneous one constructed by Ferus, Karcher and M\"{u}nzner in the $(6,9)$ case).
We therefore conclude that no such $\lambda_0$ exist and so the only nonzero entry
of $B_2^*$ (or $C_2^*$) is the last one. Since any unit normal vector perpendicular
to $n_0^*$ and $n_1^*$ can be $n_2^*$, the conclusion follows.
\end{proof}

\begin{proposition}\label{prop1} Suppose $(m_1,m_2)=(4,5)$ or $(6,9)$, and in the latter case
suppose the isoparametric hypersurface is not the inhomogeneous one constructed by Ferus, Karcher and M\"{u}nzner.
Then we can choose an orthonormal frame such that the second
fundamental form of $M_{+}$ is exactly that of the homogeneous example.
\end{proposition}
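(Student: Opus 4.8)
The plan is to combine the structural data accumulated in Lemmas~\ref{LM} through~\ref{lm5} with the Ozeki--Takeuchi expansion formula~\eqref{OT} and the mirror identities~\eqref{mirror}, in order to pin down all three blocks $A_a$, $B_a=C_a$ of the shape operators $S_{n_a}$ up to an orthonormal change of frame. By Lemma~\ref{lm5} we already know that, after a frame change, the only nonzero row of each $B_a$ ($1\le a\le m_1$, resp.\ $3\le a\le 8$) is the last one. The first step is to exploit this together with the mirror identity~\eqref{mirror}: writing $B_a$ (and $C_a$) as having a single nonzero last row $b_a\in{\mathbb R}^{m_1}$, the $pq$-part of~\eqref{mirror} with $\alpha,\beta$ ranging over the last index forces the vectors $b_a$ to be mutually orthogonal of equal length, which (after an $O(m_1)$ rotation of the $E_0$-frame $e_p$ and a normalization) makes $B_a=C_a$ exactly the matrices~\eqref{rank} (resp.~\eqref{RANK}); here one uses $\sigma=1/\sqrt2$ from~\cite[p~67]{CCJ} as invoked in the proof of Lemma~\ref{lm5}. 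In particular all nontrivial linear combinations of the $B_a$ have rank~$1$.

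The second step is to determine the $A_a:E_{-1}\to E_{1}$. Since the last rows of the $B_a$ are filled and the remaining rows are zero, the mirror identity~\eqref{mirror} restricted to indices $\alpha,\beta$ in the first $m_2-1$ (resp.\ first $8$) slots reduces to $\sum_a(A_a)_{\alpha'\cdot}(A_a^{tr})_{\cdot\beta'}+\cdots=\delta_{\alpha'\beta'}I$ on the corresponding sub-blocks, together with the constraint coming from the $\alpha$ or $\beta$ equal to the last index. This says precisely that the upper $(m_2-1)\times(m_2-1)$ (resp.\ $8\times8$) blocks of the $A_a$, suitably normalized, satisfy the Clifford relations $A_jA_k+A_kA_j=-2\delta_{jk}I$, i.e.\ they form a representation of $C_{m_1-1}$ (resp.\ $C_6$) on ${\mathbb R}^{m_2-1}$ (resp.\ ${\mathbb R}^8$) — and the last row and column of each $A_a$ are forced to vanish by the single-nonzero-row structure of $B_a$ fed back into the expansion formula~\eqref{OT} (the $\nabla p_i\cdot\nabla p_j$ term). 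Since such a Clifford representation is unique up to equivalence in these dimensions, a further orthogonal change of the $E_{1}$ and $E_{-1}$ frames brings the $A_a$ into the normal forms~\eqref{A1} (resp.\ the $A_3,\dots,A_8$ displayed after~\eqref{A2}); the quaternionic relation~\eqref{quaternion} (resp.\ its octonionic analogue) is then automatic. The $S_0$-block is already fixed as in~\eqref{0a}.

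The third step is to check that these blocks are mutually consistent, i.e.\ that the frame changes used in Steps~1 and~2 can be made simultaneously and that the resulting $p_a$ coincide with~\eqref{2nd} (resp.\ with the $\{6,9\}$ data of \S\ref{SUB}). This is where one must be careful: the rotations of $E_0$, $E_{1}$, $E_{-1}$ and of the normal frame $n_a$ used to normalize the $B_a$ and the $A_a$ separately need not be compatible a priori. The resolution is that the residual gauge group after Step~1 (the stabilizer of the normal forms~\eqref{rank}/\eqref{RANK} of the $B_a$) still acts transitively enough on Clifford representations of the relevant type to realize the normalization of Step~2; concretely, the stabilizer contains an $Spin$-action — $Spin(5)$ in the $\{4,5\}$ case, $Spin(6)\cong SU(4)$ in the $\{6,9\}$ case, acting through the Clifford module structure — and one checks that this suffices to simultaneously standardize the $A_a$. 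Finally, with $A_a$, $B_a=C_a$, and $S_0$ all matching the homogeneous example, the polynomials $p_a$ are literally those of~\eqref{2nd} (resp.\ of \S\ref{SUB}), which is the assertion of the proposition.

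\medskip

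The main obstacle is the compatibility issue of Step~3: extracting the two separate normalizations (of the $B$-blocks and of the $A$-blocks) from a single frame change, which amounts to showing that the Clifford-algebra symmetry that normalizes the $A_a$ can be chosen inside the subgroup that preserves the already-fixed shape of the $B_a$. I expect this to require a careful bookkeeping of how the $Spin$-action on the Clifford module ${\mathbb R}^{m_2-1}$ (resp.\ ${\mathbb R}^8$) extends to act on $E_0$ (via the last rows $b_a$ of the $B_a$) and on the normal space, using~\eqref{duality} to transport the identifications between $M_{+}$ and $M_{-}$ — precisely the point at which the quaternion algebra (resp.\ the octonion algebra) enters essentially rather than incidentally.
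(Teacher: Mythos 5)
Your overall strategy (normalize the $B_a$, then put the $A_a$ into Clifford normal form by uniqueness of the relevant Clifford representation, then check the two normalizations are compatible) is the same as the paper's, but the algebraic input you cite for the two crucial steps is wrong, and as written those steps fail. You derive both the orthogonality of the last rows $b_a$ of the $B_a$ and the Clifford relations among the $A_a$ from the mirror identity~\eqref{mirror}. That identity cannot deliver either conclusion. First, it only ever involves the entries $S^a_{p\alpha}$, $S^a_{p\mu}$ (i.e.\ the $B_a$, $C_a$ blocks) together with the mirror-point data $S^p_{\alpha\mu}$; the entries $S^a_{\alpha\mu}$ of the $A_a$ do not appear in it, so no choice of indices in~\eqref{mirror} produces the relation $A_jA_k^{tr}+A_kA_j^{tr}=-2\delta_{jk}I$ on any sub-block. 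Second, taking $\alpha=\beta$ equal to the last index gives $2\sum_a(b_a)_p(b_a)_q+\sum_\mu S^p_{\alpha\mu}S^q_{\alpha\mu}=\delta_{pq}$, which is contaminated by the unknown mirror term $S^p_{\alpha\mu}$ and in any case constrains the matrix $\sum_a b_a^{tr}b_a$ rather than the Gram matrix $(b_ab_c^{tr})_{ac}$; it does not force the $b_a$ to be mutually orthogonal of equal length. (In the paper the logic runs the other way: \emph{after} the $B_a$ are known to be~\eqref{rank}, the identity~\eqref{mirror} with $\alpha=\beta=5$ yields Corollary~\ref{sp}.) The identities that actually do the work are the Ozeki--Takeuchi quadratic relations on $M_{+}$ itself from \cite[II, p.~45]{OT} --- $A_jA_k^{tr}+A_kA_j^{tr}+2B_jB_k^{tr}+2B_kB_j^{tr}=0$, $A_jA_j^{tr}+2B_jB_j^{tr}=I$, the skew-symmetry of $A_jCB^{tr}+B_jC^{tr}A^{tr}+AC_jB^{tr}$, and $B_j^{tr}B+B^{tr}B_j=C_j^{tr}C+C^{tr}C_j$ --- which simultaneously kill the cross blocks $\beta,\gamma,\delta$ and the extra entries of $B_j,C_j$, give $B_j=C_j$, make the $\alpha_j$ an anticommuting orthogonal family, and give $b_jb_k^{tr}=\delta_{jk}/2$. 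Your parenthetical appeal to ``the $\nabla p_i\cdot\nabla p_j$ term'' of~\eqref{OT} points in the right direction, but you never extract these identities, and without them the Clifford structure is unproved.

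Your Step~3 also overcomplicates a point that is trivial in the correct setup: the $b_j$ are standardized by an orthogonal change of the normal frame $n_2^*,\dots$ (which mixes the $\alpha_j$ linearly and so preserves the Clifford relations), while the $\alpha_j$ are standardized by an orthogonal change of the $E_{1}$ and $E_{-1}$ frames supported on the first $m_2-1$ coordinates, which cannot touch the last row where the $b_j$ sit. No residual $Spin$-action bookkeeping is needed.
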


\begin{proof} We will prove the $(4,5)$ case and remark on the $(6,9)$ case at the end.
$S_{n_0^*}$ is the square matrix on the right hand side
of~\eqref{matrix} while $S_{n_1^*}$ is the square one on the left hand side,
where the 1-by-1 matrix $\sigma=1/\sqrt{2}$ in~\eqref{BC} and the 1-by-1 matrix
$\Delta=0$
in~\eqref{A}. We proceed to understand $S_{n_j^*}$ with the associated
matrices $A_j,B_j$ and $C_j$ for $2\leq j\leq 4$ similar to what is given
in~\eqref{matrix}. We know by Lemma~\ref{lm5} the 5-by-4 matrices $B_j$ and
$C_j$ are of the form
$$
B_j=\begin{pmatrix}0&0\\b_j&c\end{pmatrix},\qquad
C_j=\begin{pmatrix}0&0\\e_j&f\end{pmatrix}
$$
for some real numbers $c$ and $f$. Write the 5-by-5 matrix $A_j$ as
$$
A_j=\begin{pmatrix}\alpha_j&\beta\\\gamma&\delta\end{pmatrix}
$$
with $\delta$ a real number. Then the identities~\cite[II, p 45]{OT}
\begin{eqnarray}\nonumber
\aligned
A_jA^{tr}+AA_j^{tr}+2B_jB^{tr}+2BB_j^{tr}&=0,\\
A_jA^{tr}+AA_j^{tr}+2C_jC^{tr}+2CC_j^{tr}&=0
\endaligned
\end{eqnarray}
result in

\begin{equation}\label{alpha}
\alpha_j=-\alpha_j^{tr},\qquad\gamma=0,\qquad c=f=0.
\end{equation}
On the other hand, the matrix
$$
A_jCB^{tr}+B_jC^{tr}A^{tr}+AC_jB^{tr}
$$
being skew-symmetric~\cite[II, p 45]{OT} implies
$$
\beta=0,\qquad\delta=0.
$$
Meanwhile, the identity~\cite[II, p 45]{OT}
$$
A_jA_j^{tr}+2B_jB_j^{tr}=I
$$
derives
$$
\alpha_j\alpha_j^{tr}=I,\qquad b_jb_j^{tr}=1/2.
$$
Next, the identity~\cite[II, p 45]{OT}
$$
A_jA_k^{tr}+A_kA_j^{tr}+2B_jB_k^{tr}+2B_kB_j^{tr}=0
$$
with $j\neq k$ arrives at
$$
\alpha_j\alpha_k=-\alpha_k\alpha_j,\qquad b_ib_k^{tr}=0.
$$
Lastly, the identity~\cite[II, p 45]{OT}
$$
B_j^{tr}B+B^{tr}B_j=C_j^{tr}C+C^{tr}C_j
$$
yields
$$
b_j=e_j.
$$
The upshot is that
$$
A_1=\begin{pmatrix}I&0\\0&0\end{pmatrix},
\quad A_j=\begin{pmatrix}\alpha_j&0\\0&0\end{pmatrix},j=2,3,4,
\quad B_j=C_j=\begin{pmatrix}
0&0\\b_j&0\end{pmatrix}
$$
of the same block sizes with
$$
\alpha_j\alpha_k+\alpha_k\alpha_j=-2\delta_{jk}I,\qquad <b_j,b_k>=\delta_{jk}/2.
$$
As a consequence, first of all we can perform an orthonormal basis change on
$n_2^*,n_3^*,n_4^*$ so that the resulting new $b_j$ is $1/\sqrt{2}$ at the
$j$th
slot and is zero elsewhere. Meanwhile, we can perform an orthonormal basis
change of the $E_{1}$ and $E_{-1}$ spaces
so that $I$ and $\alpha_j,2\leq j\leq 4,$ are exactly the matrix
representations
of the right multiplication of $1,i,j,k$ on 
${\mathbb H}$ without affecting the row vectors $b_j,2\leq j\leq 4$.
This is precisely the second fundamental form of the homogeneous example.

In the $(6,9)$ case, $I,\alpha_4,\cdots,\alpha_8$ can be chosen to be the ones in~\eqref{cliff}
by a frame change; multiplying them through by $A_3$ on the left, which amounts to changing
the $E_{1}$-frame, will arrive at~\eqref{A2}.
\end{proof}

\begin{corollary}\label{sp} Suppose $(m_1,m_2)=(4,5)$ or $(6,9)$, and in the latter case
suppose the isoparametric hypersurface is not the inhomogeneous one constructed by
Ferus, Karcher and M\"{u}nzner.
Then $S^p_{\alpha\mu}=0$ if $\alpha=5$ or $\mu=5$ $($respectively $\alpha=9$ or $\mu=9$$)$.
\end{corollary}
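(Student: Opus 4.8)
The plan is to substitute the frame furnished by Proposition~\ref{prop1} into the mirror identity~\eqref{mirror}. Fix an orthonormal frame in which the second fundamental form of $M_{+}$ at $x$ is exactly that of the homogeneous example. Then, by~\eqref{rank} (respectively~\eqref{RANK}), the blocks $B_a$ and $C_a$ in~\eqref{0a} of the shape operator $S_{n_a}$, $1\le a\le m_1$, satisfy $B_a=C_a$, and each has a single nonzero entry, equal to $1/\sqrt2$, sitting in its last row; moreover $a\mapsto(\text{column of this entry})$ is a bijection. In the notation of~\eqref{mirror}, where $S^a_{p\alpha}=(B_a)_{\alpha p}$ and $S^a_{p\mu}=(C_a)_{\mu p}$, this says: for each $p$ there is a unique $a$ with $S^a_{p\,m_2}\ne0$, and for that $a$ one has $S^a_{p\,m_2}=1/\sqrt2$, while $S^a_{p\alpha}=0$ whenever $\alpha\ne m_2$ and likewise for the $\mu$-range; also $S^0_{p\alpha}=S^0_{p\mu}=0$ by~\eqref{0a}. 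Consequently $\sum_a (S^a_{p\,m_2})^2=1/2$ for every $p$, the $a=0$ term contributing nothing.

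The key observation is that~\eqref{mirror} is precisely the identity linking the second fundamental form of $M_{+}$ at $x$ with that at the mirror point $n_0$; it is therefore what lets us pass from the known $B_a$ to the a priori unknown components $S^p_{\alpha\mu}$. Specialize~\eqref{mirror} to $\alpha=\beta=m_2$ and $p=q$: the first two terms contribute $2\sum_a (S^a_{p\,m_2})^2=1$, the third term is $\sum_\mu (S^p_{m_2\mu})^2$, and the right-hand side is $\delta_{pp}\delta_{m_2m_2}=1$. Since the components $S^p_{m_2\mu}$ are real, $\sum_\mu (S^p_{m_2\mu})^2=0$, so $S^p_{m_2\mu}=0$ for all $p$ and $\mu$. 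Running the identical computation on the companion identity---the one obtained from~\eqref{mirror} by interchanging the $\alpha$- and $\mu$-ranges, where one uses $C_a=B_a$---with $\mu=\nu=m_2$ and $p=q$ gives $\sum_\alpha (S^p_{\alpha\,m_2})^2=0$, hence $S^p_{\alpha\,m_2}=0$ for all $p$ and $\alpha$. Thus $S^p_{\alpha\mu}=0$ whenever $\alpha=m_2$ or $\mu=m_2$, i.e.\ whenever $\alpha=5$ or $\mu=5$ in the $(4,5)$ case, and whenever $\alpha=9$ or $\mu=9$ in the $(6,9)$ case.

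There is no genuine obstacle here: the content is carried entirely by Proposition~\ref{prop1}, and~\eqref{mirror} supplies the conclusion. The only thing requiring attention is the bookkeeping between the block form~\eqref{0a}, the explicit matrices~\eqref{rank},~\eqref{RANK}, and the index conventions in~\eqref{mirror} and its symmetric counterpart, together with the (immediate) fact that the $a=0$ term drops out of both forms. The exclusion of the Ferus--Karcher--M\"{u}nzner inhomogeneous example in the $(6,9)$ case enters only through Proposition~\ref{prop1}. As a sanity check, for the homogeneous model the conclusion is visible directly from~\eqref{q0} and Lemma~\ref{Q0}, since no $x_5$ or $y_5$ occurs in $\tilde q^0$; the argument above extends this to an arbitrary isoparametric hypersurface with the given multiplicity pair (barring the inhomogeneous one in the $(6,9)$ case).
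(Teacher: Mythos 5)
Your proof is correct and is essentially the paper's own argument: the paper likewise sets $\alpha=\beta=5$ (resp.\ $9$) and $p=q$ in~\eqref{mirror}, uses~\eqref{rank}/\eqref{RANK} to evaluate the $\sum_a$ terms as $1$, and concludes the remaining sum of squares vanishes, with the $\mu=m_2$ case following from the symmetric counterpart of~\eqref{mirror}. Your write-up just makes the bookkeeping explicit.
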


\begin{proof} Setting $\alpha=\beta=5$ or $9$ and $p=q$ in~\eqref{mirror}, the result follows
by~\eqref{rank} and~\eqref{RANK}.
\end{proof}

\section{The third fundamental form} In this section we express the third fundamental
form of an isoparametric hypersurface with multiplicities $(m_1,m_2)=(4,5)$
or $(6,9)$ in terms of $S^p_{\alpha\mu}$, provided in the latter case
the hypersurface is not the inhomogeneous one constructed by Ferus, Karcher and M\"{u}nzner. Again for simplicity in exposition,
we will only consider the $(4,5)$ case with an obvious modification for
the $(6,9)$ case.

Let us recall that if we let $S(X,Y)$ be the second fundamental form, then
the third fundamental form is $q(X,Y)Z=(\nabla^{\perp}_X)(Y,Z)/3$ with
$\nabla^{\perp}$
the normal connection. Relative to an adapted frame with the normal basis
$n_a,0\leq a\leq m_1$, and the tangential basis
$e_p,1\leq p\leq m_1,$ $e_\alpha,
1\leq\alpha\leq m_2,$ and $e_\mu,1\leq\mu\leq m_2,$
spanning $E_0,E_{+},$ and $E_{-},$ respectively, of $M_{+}$,
let $S(X,Y)=\sum_a S^a(X,Y) n_a$ and $q(X,Y,Z)=\sum_a q^a(X,Y,Z) n_a$. Then,
with the Einstein summation convention,
$$
3q^a_{ijk}\omega^k=dS^a_{ij}-\theta^a_t S^t_{ij}+\theta^t_i S^a_{tj}
+\theta^t_j S^a_{it},
$$
where $\omega^k$ are the dual forms and $\theta^s_t$ are the normal and
space connection forms. By Proposition~\ref{prop1}, choose an adapted
orthonormal frame such that~\eqref{A1} and~\eqref{rank} hold.

\begin{lemma}\label{lm6} $q^0_{ijk}=0$ when two of the three lower indexes are in the
same $p,\alpha$, or $\mu$ range.
\end{lemma}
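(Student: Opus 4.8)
The strategy is to feed the structure equation
$$
3q^a_{ijk}\omega^k=dS^a_{ij}-\theta^a_t S^t_{ij}+\theta^t_i S^a_{tj}+\theta^t_j S^a_{it}
$$
with $a=0$, and to exploit the very rigid form of $S_0$ recorded in~\eqref{0a}: its only nonzero components are $S^0_{\alpha\beta}=\delta_{\alpha\beta}$ on $E_1$ and $S^0_{\mu\nu}=-\delta_{\mu\nu}$ on $E_{-1}$, with everything involving an $E_0$-index, or a mixed $E_1E_{-1}$ pair, equal to zero. Because $S^0_{ij}$ is (block-wise) parallel in this frame, the term $dS^0_{ij}$ will vanish identically for every pair $(i,j)$ we need, so the whole computation reduces to reading off the connection-form contributions $-\theta^0_t S^t_{ij}+\theta^t_i S^0_{tj}+\theta^t_j S^0_{it}$ from~\eqref{theta} and~\eqref{0a}.

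First I would organize the casework by which range the repeated pair of lower indices lies in. The three cases are: (i) two indices in the $p$-range, say $q^0_{pqk}$; (ii) two in the $\alpha$-range, $q^0_{\alpha\beta k}$; (iii) two in the $\mu$-range, $q^0_{\mu\nu k}$. In each case I would substitute the explicit connection forms from~\eqref{theta}. For instance in case (i), $S^0_{pq}=0$ and $S^0_{pt}=0=S^0_{qt}$ for all $t$, so $3q^0_{pqk}\omega^k=-\theta^0_t S^t_{pq}$; one then checks from~\eqref{theta} that $\theta^0_t$ lives only in the $\theta^0_p,\theta^0_\alpha,\theta^0_\mu$ slots whose coefficients are built from $S^a_{p\alpha},S^a_{p\mu},S^a_{\alpha\mu}$, and pairing these against $S^t_{pq}$ — which by~\eqref{0a} is nonzero only for $t$ another $E_0$-index, impossible — forces the expression to zero. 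Cases (ii) and (iii) are handled the same way, now using that $S^0_{\alpha\beta}=\pm\delta_{\alpha\beta}$ is a constant so $dS^0_{\alpha\beta}=0$, and that the surviving terms $\theta^t_\alpha S^0_{t\beta}+\theta^t_\beta S^0_{t\alpha}$ pick out $\theta^\beta_\alpha+\theta^\alpha_\beta$, which is skew in $\alpha,\beta$ (a connection form of $SO(m_2)$) hence contributes nothing symmetric, and $\theta^0_t S^t_{\alpha\beta}$ vanishes because $S^t_{\alpha\beta}=0$ whenever $t$ is an $E_0$-index while $S^t_{\alpha\beta}\ne 0$ only then; one must be slightly careful separating the symmetric part (which is what $q^0$ sees) from the antisymmetric connection part.

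The main obstacle I anticipate is bookkeeping rather than conceptual: the index $t$ in $\theta^t_i S^0_{tj}$ ranges over all of $p,\alpha,\mu$, and one has to verify for each sub-range that either the relevant $\theta$-component vanishes or the $S^0$ factor it multiplies vanishes, and that the apparently surviving pieces cancel in pairs because of the symmetry/skew-symmetry of $q^0$ versus the connection forms. In particular, the crossed pieces where one lower index of $q^0$ is, say, in the $\alpha$-range and the repeated constraint is ``two in the same range'' must be read against the fact — guaranteed by Proposition~\ref{prop1} and~\eqref{A1},~\eqref{rank} — that in the chosen frame $S^0$ has the clean diagonal block form with no $E_0$-coupling, so no ``leakage'' terms appear. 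Once each of the three cases collapses, the lemma follows; I would present it as a short direct verification, citing~\eqref{diff},~\eqref{theta} and~\eqref{0a}, exactly in the spirit of the proof of Lemma~\ref{Q0}.
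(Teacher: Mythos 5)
Your argument is correct, but it is not the paper's argument: the paper disposes of Lemma~\ref{lm6} in one line by citing Ozeki and Takeuchi~\cite[I, p 537]{OT}, whereas you give a self-contained verification from the structure equation. Your route is in fact exactly the method the paper itself uses two lemmas later for the components $q^a_{ijk}$ with $a\geq 1$, and it works here for an even simpler reason than in those cases: in the adapted frame $S^0_{ij}$ is the constant tensor $\mathrm{diag}(I,-I,0)$ of~\eqref{0a}, so $dS^0_{ij}=0$; the normal-connection term dies because every diagonal block of $S_a$, $a\geq 1$, vanishes by~\eqref{0a}; and the tangential terms either vanish for the same reason or collapse to $\theta^\beta_\alpha+\theta^\alpha_\beta=0$, which is exactly zero by skew-symmetry of the connection matrix of an orthonormal frame (not merely ``nothing symmetric''). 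Two small points: your appeal to Proposition~\ref{prop1},~\eqref{A1} and~\eqref{rank} is unnecessary for this lemma, since the block form of $S_0$ and the vanishing of the diagonal blocks of the $S_a$ hold for any adapted frame and any multiplicities (which is why the statement is already in~\cite{OT} in full generality); and your discussion of the term $-\theta^0_t S^t_{pq}$ is slightly garbled --- the clean statement is that this is the normal-connection contribution $\pm\,\omega^a S^a_{ij}$, which vanishes because $S^a_{pq}=S^a_{\alpha\beta}=S^a_{\mu\nu}=0$ for all $a$. What your approach buys is a proof that lives entirely inside the frame computations of~\eqref{diff} and~\eqref{theta}, uniform with the rest of the section; what the citation buys is brevity and the reassurance that the statement is frame- and case-independent.
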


\begin{proof} This was proved in~\cite[I, p 537]{OT}.
\end{proof}
\begin{lemma} $q^a_{pqk}=0$ for $1\leq a\leq 4$ and all $k$.
\end{lemma}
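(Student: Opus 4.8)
The plan is to evaluate the structure equation
$$
3q^a_{ijk}\omega^k=dS^a_{ij}-\theta^a_t S^t_{ij}+\theta^t_i S^a_{tj}+\theta^t_j S^a_{it}
$$
at the chosen adapted frame for which \eqref{A1} and \eqref{rank} hold, taking $(i,j)=(p,q)$ in the $E_0$ range. The key observation is that in this frame the $E_0$-block of each $S_a$, $1\le a\le 4$, vanishes identically: by \eqref{0a} the $(p,q)$-entry of $S_a$ is zero, and by \eqref{A1}, \eqref{rank} the matrices $A_a$ have trivial $E_0$-rows and columns and $B_a=C_a$ have their single nonzero entry in the last $E_0$-slot only. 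So $S^a_{pq}\equiv 0$ as a function on $UN_+$, hence $dS^a_{pq}=0$, and the left side's candidate contributions must be matched purely by connection terms.

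Next I would expand the three connection-form terms on the right using \eqref{theta}. The term $\theta^a_t S^t_{pq}$ involves $S^t_{pq}$ with $t$ in the $p,\alpha,\mu$ ranges; $S^b_{pq}=0$ for $b\ge 1$ as noted, and $S^0_{pq}=0$ by \eqref{0a}, while the $t=\alpha$ or $t=\mu$ pieces $S^a_{p\alpha}$, $S^a_{p\mu}$ are precisely the entries of $C_a$ (up to the frame identification), which by \eqref{rank}/\eqref{RANK} vanish unless the $E_0$-index equals the last one. The terms $\theta^t_p S^a_{tq}$ and $\theta^t_q S^a_{pt}$ similarly pick up $S^a_{tq}$, $S^a_{pt}$ with $t$ ranging over $\alpha,\mu$, i.e. entries of $A_a$ (zero in $E_0$-rows/columns by \eqref{A1}) or of $B_a=C_a$. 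Assembling these, each surviving summand is forced to carry the last-slot $E_0$-index; reading off the coefficient of $\omega^k$ and using that the $\theta$'s are themselves linear combinations of the $\omega$'s via \eqref{theta}, the only potentially nonzero $q^a_{pqk}$ would be those where the relevant indices hit the last $E_0$-slot, and then Corollary~\ref{sp} (which gives $S^p_{\alpha\mu}=0$ when $\alpha$ or $\mu$ is the last index, equivalently in the $(6,9)$ case $\alpha,\mu=9$) kills exactly those remaining entries.

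The main obstacle will be the bookkeeping in the second step: one must carefully track which $S$-entries appearing in the expanded $\theta$'s are ``$B$-type'' (nonzero only in the last $E_0$-slot) versus ``$A$-type'' (identically zero on $E_0$-indices), and confirm that after collecting the coefficient of each $\omega^k$ the terms either cancel against one another by the symmetry of $S$ in its lower indices or are annihilated by Corollary~\ref{sp}. A clean way to organize this is to split into cases according to whether $k$ lies in the $p$-, $\alpha$-, or $\mu$-range, and in each case use \eqref{theta} to write the $\theta$'s explicitly; the $A$-type entries drop out immediately because $A_a$ has a zero last row and column, and the $B$-type entries are controlled by rank-one structure \eqref{rank} together with Corollary~\ref{sp}. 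This should give $q^a_{pqk}=0$ for all $k$ and $1\le a\le 4$ (resp. $3\le a\le 8$ in the $(6,9)$ case). I would then remark, as throughout the section, that the $(6,9)$ argument is identical after the obvious change of index ranges, using \eqref{A2}, \eqref{RANK}, and the $(6,9)$ form of Corollary~\ref{sp}.
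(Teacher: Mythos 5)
Your plan is essentially the paper's proof: $S^a_{pq}\equiv 0$ kills $dS^a_{pq}$, the rank-one structure of $B_a=C_a$ in \eqref{rank} forces the surviving connection terms to be $\theta^{\alpha=5}_p S^a_{\alpha=5\,q}$ and its three companions, and evaluating these via \eqref{theta} reduces everything to entries $S^p_{5\mu}$ or $S^p_{\alpha 5}$, which vanish by Corollary~\ref{sp} (for $k$ in the $p$-range the paper simply cites Ozeki--Takeuchi, though the same evaluation gives zero there too since the relevant $\theta$'s have no $\omega^p$-components). One small bookkeeping correction: the single nonzero entry of $B_a$ sits in the last $E_{\pm 1}$-slot ($\alpha=5$) and the \emph{$a$-th} $E_0$-slot, not the last $E_0$-slot as you wrote --- it is precisely the forced condition $\alpha=5$ (resp.\ $\mu=5$) that makes Corollary~\ref{sp} applicable.
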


\begin{proof} $S^a_{pq}=0$ for $0\leq a\leq 4$, $S^p_{\alpha\mu}=0$
when either $\alpha=5$ or $\mu=5$ by Corollary~\ref{sp},
and $S^a_{\alpha p}=S^a_{\mu p}=0$
when $\alpha,\mu\leq 4$ by Proposition~\ref{prop1} and~\eqref{A1}. So,
in Einstein summation convention,

\begin{eqnarray}\nonumber
\aligned
3q^a_{pqk}&=3q^a_{pqk}\omega^k(e_k)\\
&=(\theta^{\alpha=5}_p S^a_{\alpha=5\; q}
+\theta^{\mu=5}_p S^a_{\mu=5\; q}+\theta^{\alpha=5}_q S^a_{p\;\alpha=5}
+\theta^{\mu=5}_q S^a_{p\; \mu=5})(e_k)=0
\endaligned
\end{eqnarray}
by~\eqref{theta} when $k$ is in either the $\alpha$ or $\mu$ range.
$q^a_{pqk}=0$ when $k$ is in the $p$-range~\cite[I, p 537]{OT}.
\end{proof}

\begin{lemma} For $1\leq \alpha,\beta\leq 4$, there holds $q^a_{\alpha\beta\mu}=0$,
while
$$
3q^a_{\alpha\beta p}=1/2\sum_{\mu=1}^4
(S^p_{\alpha\mu}S^a_{\beta\mu}+S^p_{\beta\mu}S^a_{\alpha\mu}).
$$
For $\alpha=5,$ there holds $q^a_{\alpha\beta p}=0$ while
$$
3q^a_{\alpha\beta\mu}=S^{p=a}_{\beta\mu}/\sqrt{2}.
$$
\end{lemma}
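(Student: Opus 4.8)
The plan is to compute each of the desired components of $q^0$-type from the structural equation
$$
3q^a_{ijk}\omega^k=dS^a_{ij}-\theta^a_t S^t_{ij}+\theta^t_i S^a_{tj}+\theta^t_j S^a_{it}
$$
by substituting the concrete shape-operator data from Proposition~\ref{prop1}, namely that $S^a_{pq}=0$ for $0\le a\le 4$, that $S^a_{\alpha p}=S^a_{\mu p}=0$ whenever $\alpha,\mu\le 4$ (since the only nonzero entries of $B_a=C_a$ sit in the last row by~\eqref{rank}), and that $S^p_{\alpha\mu}=0$ when $\alpha=5$ or $\mu=5$ by Corollary~\ref{sp}. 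First I would fix $1\le\alpha,\beta\le 4$ and evaluate $3q^a_{\alpha\beta\mu}$ by pairing with $\omega^k(e_k)$: the $dS^a_{\alpha\beta}$ term vanishes because $S^a_{\alpha\beta}=0$ for $\alpha,\beta$ in the $E_{+}$ range (the $A_a$ blocks in~\eqref{A1} only couple $E_{+}$ with $E_{-}$, not $E_{+}$ with itself, once $\alpha,\beta\le 4$), and one tracks which $\theta$-terms from~\eqref{theta} survive when $k$ runs over the $\mu$-range. The surviving contractions involve $\theta^p_\alpha$ and $\theta^p_\beta$ hitting $S^a_{p\beta}$, $S^a_{\alpha p}$; but those are zero for $\alpha,\beta\le 4$, forcing $q^a_{\alpha\beta\mu}=0$.

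Next, for $3q^a_{\alpha\beta p}$ with $1\le\alpha,\beta\le 4$, I would again drop $dS^a_{\alpha\beta}=0$ and the $\theta^a_t S^t_{\alpha\beta}$ term (here $S^t_{\alpha\beta}$ is nonzero only for $t=0$, but $\theta^a_0$ does not appear — or it contributes $S^0_{\alpha\beta}=0$ since $\alpha\ne\beta$ in the relevant off-diagonal and $S_0$ is diagonal). The live terms are $\theta^t_\alpha S^a_{t\beta}+\theta^t_\beta S^a_{t\alpha}$ evaluated on $e_p$; the only way to land in a nonzero $S^a_{t\beta}$ with $t$ contracted against a connection form proportional to $\omega^p$ is to take $t=\mu$ (an $E_{-}$ index), using $\theta^\mu_\alpha$ from~\eqref{theta}, which contributes the term $-S^p_{\alpha\mu}\omega^p$ (or the corresponding $S^a_{\alpha\mu}\omega^a$ piece — one must be careful to isolate the $\omega^p$ coefficient, not the $\omega^a$ one). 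Collecting the $\omega^p$-coefficients and using $S^a_{\beta\mu}=0$ for $\mu=5$ to restrict the sum to $1\le\mu\le 4$ yields exactly $3q^a_{\alpha\beta p}=\tfrac12\sum_{\mu=1}^4(S^p_{\alpha\mu}S^a_{\beta\mu}+S^p_{\beta\mu}S^a_{\alpha\mu})$.

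Finally, the $\alpha=5$ statements: $q^a_{5\beta p}=0$ follows because $S^a_{5\beta}$ couples $E_{+}$ (index $5$) to $E_{-}$ via $A_a$, but by~\eqref{A1} the $A_a$ blocks have zero fifth row/column, so $S^a_{5\beta}=0$; one then repeats the contraction bookkeeping with $k$ in the $p$-range and finds every surviving $\theta$-term kills against a vanishing component. For $3q^a_{5\beta\mu}$ with $\beta\le 4$, the nonzero contribution comes from the $S^a_{5p}=S_{p}$-type entries of $B_a$: precisely, the term $\theta^p_5 S^a_{p\beta}$-type contractions, or more directly the contraction $\theta^p_\mu$ or $\theta^p_\beta$ against the last-row entry of $B_a$, which is $1/\sqrt2$ in the $a$-th slot by~\eqref{rank}; this produces $3q^a_{5\beta\mu}=S^{p=a}_{\beta\mu}/\sqrt2$. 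The main obstacle I anticipate is the careful index bookkeeping in~\eqref{theta}: several connection forms $\theta^t_s$ are sums of two pieces (one with $\omega^a$, one with $\omega^p$ or $\omega^\mu$), and one must correctly extract only the coefficient of the relevant $\omega^k$ when evaluating on $e_k$, while also not overlooking the factor-of-$1/2$ appearing in $\theta_{\alpha\mu}$. Keeping straight which of the three $S^p_{\alpha\mu}$, $S^a_{\alpha\mu}$, $S^a_{p\alpha}$ tensors each term refers to — and using Corollary~\ref{sp} and~\eqref{A1} at each stage to zero out the inadmissible index values — is where the real care lies.
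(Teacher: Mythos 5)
Your proposal follows the paper's own proof essentially verbatim: both evaluate the structure equation $3q^a_{ijk}\omega^k=dS^a_{ij}-\theta^a_tS^t_{ij}+\theta^t_iS^a_{tj}+\theta^t_jS^a_{it}$ on the appropriate frame vectors and eliminate terms using \eqref{theta}, \eqref{A1}, \eqref{rank} and Corollary~\ref{sp}, so that the only surviving contributions are $\theta^\mu_\alpha(e_p)S^a_{\beta\mu}+\theta^\mu_\beta(e_p)S^a_{\alpha\mu}$ for the first displayed formula and $\theta^q_\beta(e_\mu)S^a_{\alpha q}$ for the second, exactly as in the paper. The one blemish is your intermediate claim that $\theta^\mu_\alpha$ contributes $-S^p_{\alpha\mu}\omega^p$, whereas \eqref{theta} gives $+\tfrac12 S^p_{\alpha\mu}\omega^p$; since you flag the factor of $1/2$ yourself and your final formula has the correct sign and coefficient, this is a slip of transcription rather than a gap.
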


\begin{proof} For $1\leq\alpha,\beta\leq 4$, similar calculations as above
yields
$$
3q^a_{\alpha\beta p}=\theta_\alpha^\mu(e_p)S^a_{\beta\mu}+\theta_\beta^\mu(e_p)S^a_{\alpha\mu}
$$
which is the desired result by~\eqref{theta}. Likewise,
$$
3q^a_{\alpha\beta\nu}=\theta_\alpha^\mu(e_\mu)S^a_{\beta\nu}+\theta_\beta^\mu(e_\nu)S^a_{\alpha\mu}=0.
$$
For $\alpha=5$,
$$
3q^a_{\alpha\beta p}=(\theta^q_\alpha S^a_{q\beta}+\theta^q_\beta S^a_{q\alpha}
+\theta^\mu_\alpha S^a_{\mu\beta}+\theta^\mu_\beta S^a_{\mu\alpha})(e_p)=0
$$
by~\eqref{theta} and Corollary~\ref{sp}. Likewise,
$$
3q^a_{\alpha\beta\mu}=\theta^q_\beta(e_\mu)S^a_{\alpha q}=S^{p=a}_{\beta\mu}/\sqrt{2}
$$
by~\eqref{theta}, Corollary~\ref{sp} and~\eqref{rank}.
\end{proof}

A parallel argument gives the following.

\begin{lemma} For $1\leq \mu,\nu\leq 4$, there holds
$q^a_{\mu\nu\alpha}=0$,
while
$$
3q^a_{\mu\nu p}=-1/2\sum_{\alpha=1}^4
(S^p_{\alpha\mu}S^a_{\alpha\nu}+S^p_{\alpha\nu}S^a_{\alpha\mu}).
$$
For $\mu=5,$ there holds $q^a_{\mu\nu p}=0$ while
$$
3q^a_{\mu\nu\alpha}=-S^{p=a}_{\alpha\nu}/\sqrt{2}.
$$
\end{lemma}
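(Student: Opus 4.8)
The plan is to mirror, line by line, the computation carried out in the preceding lemmas for the $E_{+}$ range, now applied to the $E_{-}$ range, exploiting the symmetry between $\alpha$ and $\mu$ that is built into the structure equations~\eqref{diff},~\eqref{theta} and the mirror identity~\eqref{mirror}. Concretely, for $1\le \mu,\nu\le 4$ I would expand
$$
3q^a_{\mu\nu k}\omega^k=dS^a_{\mu\nu}-\theta^a_t S^t_{\mu\nu}+\theta^t_\mu S^a_{t\nu}+\theta^t_\nu S^a_{\mu t},
$$
and evaluate on $e_k$ for $k$ in the $\alpha$, $\mu$, and $p$ ranges separately. The vanishing inputs are exactly the analogues of those used before: $S^a_{\mu\nu}=0$ for $0\le a\le 4$ (from~\eqref{A1} and the block form~\eqref{0a}, noting $A_a$ has only $I$ or $J$ blocks on the first four coordinates), $S^p_{\alpha\mu}=0$ whenever $\alpha=5$ or $\mu=5$ (Corollary~\ref{sp}), and $S^a_{\mu p}=0$ for $\mu\le 4$ (Proposition~\ref{prop1} together with the shape of $B_a=C_a$ in~\eqref{rank}, whose only nonzero row is the last).

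For the first assertion, $q^a_{\mu\nu\alpha}=0$: after discarding the terms killed by the vanishing inputs, the only surviving connection forms are $\theta^\mu_\nu$ and $\theta^\nu_\mu$ of type~\eqref{theta}, i.e. $\theta_\mu^\nu=$ (something)$\cdot\omega^a+$ (something)$\cdot\omega^p$ — there is no $\omega^\beta$ component — so evaluating on $e_\beta$ for $\beta$ in the $E_{+}$ range gives zero, exactly as in the $q^a_{\alpha\beta\nu}=0$ case. For $1\le\mu,\nu\le 4$ and $k=p$ in the $E_0$ range, only $\theta_\mu^\alpha(e_p)S^a_{\alpha\nu}+\theta_\nu^\alpha(e_p)S^a_{\alpha\mu}$ survives (with $\alpha$ now ranging over $1,\dots,4$ because of Corollary~\ref{sp}), and reading off $\theta_\mu^\alpha(e_p)=-S^p_{\alpha\mu}$ from the symmetric partner of~\eqref{theta} — the one obtained by swapping the $\alpha$ and $\mu$ ranges, which the paper has already invoked after~\eqref{mirror} — produces
$$
3q^a_{\mu\nu p}=-\tfrac12\sum_{\alpha=1}^4\bigl(S^p_{\alpha\mu}S^a_{\alpha\nu}+S^p_{\alpha\nu}S^a_{\alpha\mu}\bigr),
$$
the sign being the one that comes out of the $\alpha\leftrightarrow\mu$-swapped version of the last line of~\eqref{theta}. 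For $\mu=5$: here $S^p_{\alpha\mu}=0$ for every $\alpha,p$ by Corollary~\ref{sp}, and $S^a_{q\mu},S^a_{\alpha\mu}$ with $\mu=5$ are governed by~\eqref{rank}; the $k=p$ evaluation has all terms annihilated by Corollary~\ref{sp}, giving $q^a_{\mu\nu p}=0$, while the $k=\alpha$ evaluation leaves a single term $\theta^q_\nu(e_\alpha)S^a_{\mu q}$, and $S^a_{\mu q}$ with $\mu=5$ is $1/\sqrt2$ precisely when $q=a$ by~\eqref{rank}, so $3q^a_{\mu\nu\alpha}=-S^{p=a}_{\alpha\nu}/\sqrt2$, the overall minus sign again issuing from the half-coefficient $(S_{\alpha\mu}^p/2)$ in the mirrored~\eqref{theta} combined with the sign in $dn_0$ versus $dx$.

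Since this is declared in the paper to be "a parallel argument", I expect no genuine obstacle — the entire content is bookkeeping of which structure-equation terms survive. The one place demanding care is keeping the signs straight: the $E_{-}$ equations in~\eqref{diff} carry an extra minus relative to the $E_{+}$ ones (compare $de_\alpha=\cdots+\omega^\alpha n_0+\cdots$ against $de_\mu=\cdots-\omega^\mu n_0+\cdots$), and the $\alpha\leftrightarrow\mu$ interchange in~\eqref{theta} and~\eqref{mirror} flips a sign in the connection-form coefficients; these two effects are what turn the $+\tfrac12$ of the previous lemma into $-\tfrac12$ here and the $+S^{p=a}_{\beta\mu}/\sqrt2$ into $-S^{p=a}_{\alpha\nu}/\sqrt2$. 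I would verify those signs once against the homogeneous model~\eqref{q0} (where $\tilde q^0$ is explicitly known) as a sanity check, and otherwise the proof reads: apply~\eqref{theta} (in its $\alpha$-$\mu$-swapped form), Corollary~\ref{sp},~\eqref{rank}, and Lemma~\ref{lm6}, evaluating on $e_k$ range by range.
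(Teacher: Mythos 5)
Your proposal is exactly the paper's intended argument: the paper gives no proof of this lemma beyond the phrase ``a parallel argument,'' and your range-by-range evaluation of the structure equation for $3q^a_{ijk}\omega^k$, with the sign flips traced to the $E_{+}$/$E_{-}$ asymmetry in~\eqref{diff} and~\eqref{theta}, is the correct way to carry it out. Two small bookkeeping slips --- $\theta_\mu^\alpha(e_p)$ equals $-S^p_{\alpha\mu}/2$, not $-S^p_{\alpha\mu}$ (your final formula does carry the correct $1/2$, which comes from the last line of~\eqref{theta} together with the antisymmetry $\theta^\alpha_\mu=-\theta^\mu_\alpha$ rather than from an ``$\alpha\leftrightarrow\mu$-swapped'' version of that line), and the connection forms that survive in the $q^a_{\mu\nu\alpha}$ computation are the cross-range forms $\theta^\gamma_\mu$, $\gamma$ in the $\alpha$-range, not the intra-range $\theta^\nu_\mu$ --- do not affect the conclusions.
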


\begin{lemma}\label{lm7} $3q^0_{p\alpha\mu}=-S^p_{\alpha\mu}$.
\end{lemma}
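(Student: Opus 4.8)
The plan is to compute $3q^0_{p\alpha\mu}$ directly from the structure equation
$$
3q^a_{ijk}\omega^k=dS^a_{ij}-\theta^a_t S^t_{ij}+\theta^t_i S^a_{tj}+\theta^t_j S^a_{it}
$$
specialized to $a=0$, $i=p$, $j=\alpha$, $k$ in the $\mu$-range, evaluated on $e_\mu$. First I would recall that for the frame fixed by Proposition~\ref{prop1}, the matrix $S_0$ has the block form~\eqref{0a}, so $S^0_{p\alpha}=0$ identically (the $E_0$--$E_1$ block of $S_0$ vanishes); hence $dS^0_{p\alpha}=0$ and the term $-\theta^0_t S^t_{p\alpha}$ contributes nothing new beyond what the connection forms carry. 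The surviving contributions come from $\theta^t_p S^0_{t\alpha}$ and $\theta^t_\alpha S^0_{pt}$. Since $S_0$ acts as $+1$ on $E_1$, $-1$ on $E_{-1}$, and $0$ on $E_0$, the only nonzero entries $S^0_{t\alpha}$ with the second index in the $\alpha$ (i.e. $E_1$) range occur when $t$ is also in the $E_1$ range and equals $\alpha$, giving $S^0_{\beta\alpha}=\delta_{\beta\alpha}$; similarly $S^0_{pt}$ is nonzero only when $t=p'$ is in... no — wait, $S^0_{p t}=0$ for all $t$ since the entire $p$-row of $S_0$ vanishes. So in fact only $\theta^\beta_p S^0_{\beta\alpha}=\theta^\alpha_p$ survives from that pair, and from $\theta^t_\alpha S^0_{p t}$ nothing survives. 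That looks too small, so I would instead more carefully track which index $t$ makes each term live: the correct bookkeeping is that $S^0_{t\alpha}\neq 0$ forces $t$ in the $E_1$ or $E_{-1}$ range, and $S^0_{it}\neq 0$ with $i=p$ is impossible, so the relevant term is $\theta^{\mu'}_p$ contracted against $S^0_{\mu'\alpha}$, but $S^0_{\mu'\alpha}=0$ since it mixes $E_{-1}$ and $E_1$; hence one must keep $\theta^\beta_\alpha S^0_{p\beta}$-type terms honestly — and here is where the $E_{-1}$ index enters through $\theta^\mu_\alpha$ from~\eqref{theta}.

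Concretely, the plan is: evaluate the structure equation on $e_\mu$, use $S^0_{p\alpha}\equiv 0$ to kill the $d$ and the $\theta^0$ terms, and reduce to
$$
3q^0_{p\alpha\mu}=\big(\theta^t_p S^0_{t\alpha}+\theta^t_\alpha S^0_{pt}\big)(e_\mu).
$$
Then plug in $S^0_{t\alpha}=\delta_{t\alpha}$ when $t$ is an $E_1$-index (zero otherwise) and $S^0_{pt}=0$ for all $t$, leaving $3q^0_{p\alpha\mu}=\theta^\beta_p(e_\mu)S^0_{\beta\alpha}=\theta^\alpha_p(e_\mu)$ — but wait, $\theta^\alpha_p$ here is an $E_1$-to-$E_0$ connection form, and from~\eqref{theta}, $\theta^\alpha_p = -S^a_{p\alpha}\omega^a - S^p_{\alpha\nu}\omega^\nu$ (using the entry $\theta_p^\alpha$ there, noting the symmetry $S^p_{\alpha\nu}=S^{p}_{\alpha\nu}$), so $\theta^\alpha_p(e_\mu)=-S^p_{\alpha\mu}$. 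This gives exactly $3q^0_{p\alpha\mu}=-S^p_{\alpha\mu}$, matching Lemma~\ref{Q0}'s expression for $q^0$ as claimed.

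The main obstacle — and the step I would carry out most carefully — is the index bookkeeping: making sure I have the right entry of~\eqref{theta} (there is both $\theta_p^\alpha$ and $\theta_\alpha^p$ up to sign, and the various $S$'s are symmetrized differently), and confirming that no cross-terms from $\theta^\mu_p S^0_{\mu\alpha}$ or $\theta^\alpha_\alpha$-type diagonal pieces contribute (they don't, because $S^0$ is diagonal in the $E_1\oplus E_{-1}\oplus E_0$ splitting and the connection forms respecting orthonormality are skew, so $\theta^\alpha_\alpha=0$ and $S^0_{\mu\alpha}=0$). Once that is pinned down the computation is a two-line contraction. I would also cross-check the sign against Lemma~\ref{Q0} and against the homogeneous model's $\tilde q^0$ in~\eqref{q0} versus the $B_a=C_a$ structure, to be sure the overall sign convention (recall the paper's $q^i$ differs from Ozeki--Takeuchi's by a sign) comes out as $-S^p_{\alpha\mu}$ rather than $+S^p_{\alpha\mu}$.
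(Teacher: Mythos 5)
Your final computation is correct, but you reach the lemma by a genuinely different route than the paper. The paper's proof is a one-line appeal to Lemma~\ref{Q0}, i.e.\ to the identification of $q^0(y,y,y)$ with $-\sum S^p_{\alpha\mu}X_\alpha Y_\mu Z_p$, which was itself obtained from the Ozeki--Takeuchi expansion formula~\eqref{OT} read off at the mirror point $n_0\in M_{+}$ (where the $0$-eigenspace of $S_x$ is spanned by the $n_a$ and the $\pm1$-eigenspaces coincide with $E_{\pm1}$). You instead run the moving-frames structure equation $3q^a_{ijk}\omega^k=dS^a_{ij}-\theta^a_tS^t_{ij}+\theta^t_iS^a_{tj}+\theta^t_jS^a_{it}$ with $a=0$, exactly as the paper does for the neighboring lemmas on $q^a$, $a\geq1$; the only surviving term is $\theta^\alpha_p(e_\mu)S^0_{\alpha\alpha}=-S^p_{\alpha\mu}$ by~\eqref{theta} and the block form~\eqref{0a} of $S_0$. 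Both arguments are legitimate; yours has the merit of being uniform with the rest of the section and of producing the component identity with its normalization directly, whereas the paper's citation of Lemma~\ref{Q0} requires translating between the cubic polynomial and the symmetric tensor components. Two points to tighten: (i) your dismissal of the normal-connection term $-\theta^0_bS^b_{p\alpha}$ as ``contributing nothing new'' is not an argument --- note that $S^a_{p\alpha}\neq0$ when $\alpha=5$ and $p=a$ by~\eqref{rank}, so you must invoke the fact that the normal connection form is a multiple of $\omega^a$ (from $dn_a=-\omega^a n_0+\cdots$ in~\eqref{diff}) and hence vanishes on $e_\mu$; (ii) the wandering middle paragraph, in which you twice reverse yourself about which contraction survives, should be replaced by the clean two-line bookkeeping you arrive at in the end.
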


\begin{proof}
This is Lemma~\ref{Q0}.
%%We know $S^0_{ij}=0$ as long as $i$ and $j$ are from different
%%ranges or when they are in the $p$-range, and $=\pm \delta_{ij}$ when they are
%%in the $\alpha$ or $\mu$ range, so that
%%$$
%%3q^0_{\alpha p\mu}=3q^0_{\alpha pk}\omega^k(e_\mu)
%%=(-\theta^0_bS^b_{\alpha p}
%%+\theta^\beta_p S^0_{\alpha\beta})(e_\mu)
%%=\theta_p^\alpha(e_\mu)=-S^p_{\alpha\mu}
%%$$
%%for $\alpha\leq 4$ since $S^b_{\alpha p}=0$ by~\eqref{A1}.
%%Likewise it holds when $\mu\leq 4$.
%%Suppose $\alpha=\mu=5$. Then
%%$$
%%3q^0_{\alpha\mu p}=3q^0_{\alpha\mu k}\omega^k(e_p)
%%=(-\theta^0_b S^b_{\alpha\mu}-2\theta^\mu_{\alpha})(e_p)
%%=-2\theta^\mu_\alpha(e_p)=-S^p_{\alpha\mu}
%%$$
%%because of~\eqref{A1} and~\eqref{rank}. 
\end{proof}

\begin{lemma} For $1\leq a\leq 4$, suppose either $\alpha\leq 4$
$($respectively, $\mu\leq 4$$)$.
Then we have $q^a_{p\alpha\mu}=0$ if $p\neq a$, and
$$
3q^a_{p\alpha\mu}=\theta^5_\alpha(e_\mu)/\sqrt{2},
\;(\text{respectively}\;3q^a_{p\alpha\mu}=\theta^5_\mu(e_\alpha)/\sqrt{2})
$$
if $p=a$. Here the superscript 5 is in the $\alpha$-range
(respectively, $\mu$-range).
\end{lemma}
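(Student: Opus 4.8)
The plan is to compute $3q^a_{p\alpha\mu}$ directly from the structure equation
$$
3q^a_{ijk}\omega^k=dS^a_{ij}-\theta^a_t S^t_{ij}+\theta^t_i S^a_{tj}+\theta^t_j S^a_{it},
$$
evaluated at the given point, with $i=p$, $j=\alpha$ (the case $i=p$, $j=\mu$ being symmetric), and with $k$ specialized to the $\mu$-range. First I would record which components of $S^a$ vanish at the point in the chosen adapted frame from Proposition~\ref{prop1}: by~\eqref{A1} and~\eqref{rank}, for $1\le a\le 4$ we have $S^a_{pq}=0$, $S^a_{\alpha p}=0$ and $S^a_{\mu p}=0$ whenever $\alpha,\mu\le 4$, while $S^a_{\alpha\mu}=(\alpha_a)_{\alpha\mu}$ only for $\alpha,\mu\le 4$ and $S^a_{5\,p}=S^a_{p\,5}=\delta_{ap}/\sqrt2$; also $S^0$ is given by~\eqref{0a} and $S^p_{\alpha\mu}=0$ when $\alpha=5$ or $\mu=5$ by Corollary~\ref{sp}.

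The term $dS^a_{p\alpha}$ vanishes because $S^a_{p\alpha}\equiv 0$ in a neighborhood (it is identically zero whenever $\alpha\le 4$, by Proposition~\ref{prop1}; one must check it stays zero, which follows since the frame normalization there is smooth). In $-\theta^a_t S^t_{p\alpha}$, the index $t$ runs over $p,\alpha,\mu$; with $\alpha\le 4$ the surviving factor $S^t_{p\alpha}$ forces $t$ into the $\mu$-range with $S^\mu_{p\alpha}$, but $S^\mu_{p\alpha}=S^p_{\alpha\mu}$ vanishes unless... actually this needs the mirror relation; here $S^t_{p\alpha}$ with $t=\mu$ is the third-slot-$\mu$ component, equal to $S^p_{\alpha\mu}$ by symmetry of the second fundamental form at the base point, which is nonzero only when $\mu\le 4$ — wait, $S^p_{\alpha\mu}$ need not vanish for $\mu\le 4$. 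I would therefore keep this term and see that it contributes $-\theta^\mu_a(e_\nu)S^p_{\alpha\mu}$-type terms that, upon evaluating, combine with the remaining connection terms; the cleanest route is to note $S^a_{pt}$ with $t$ in the $\alpha$ or $\mu$ range vanishes (for $\alpha\le 4$) except when the range-index equals $5$, so the term $\theta^t_j S^a_{it}=\theta^t_\alpha S^a_{pt}$ collapses to $\theta^{5}_\alpha(e_\mu)S^a_{p\,5}=\theta^5_\alpha(e_\mu)\,\delta_{ap}/\sqrt2$ by~\eqref{rank}. The term $\theta^t_i S^a_{tj}=\theta^t_p S^a_{t\alpha}$ similarly forces $t=5$ giving $\theta^5_p(e_\mu)S^a_{5\alpha}$, but $S^a_{5\alpha}=0$ for $a\le 4$, $\alpha\le 4$ by~\eqref{A1}. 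So after all cancellations only the $\delta_{ap}$ term survives, yielding $3q^a_{p\alpha\mu}=0$ for $p\ne a$ and $3q^a_{p\alpha\mu}=\theta^5_\alpha(e_\mu)/\sqrt2$ for $p=a$, exactly as stated; the $\mu\le 4$ case is identical with $\alpha$ and $\mu$ interchanged, giving $\theta^5_\mu(e_\alpha)/\sqrt2$.

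The main obstacle I anticipate is the bookkeeping of the $-\theta^a_t S^t_{p\alpha}$ term: one must be sure that $S^\mu_{p\alpha}$ (the component of the second fundamental form with two legs $p,\alpha$ and a $\mu$-leg) genuinely drops out, either because it equals $S^p_{\alpha\mu}$ via symmetry and then is killed against a $\theta^a_\mu$ that itself vanishes on the relevant vectors by~\eqref{theta}, or because the whole expression is being evaluated on $e_\mu$ and the one-form $\theta^a_t$ from~\eqref{theta} only involves $\omega^p,\omega^\alpha,\omega^\mu$ combinations with coefficients $S^a_{p\alpha}, S^a_{\alpha\mu}, S^a_{p\mu}$, all of which vanish in the relevant index ranges by Proposition~\ref{prop1} and Corollary~\ref{sp}. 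Carefully pinning down $\theta^a_t$ from~\eqref{theta} and feeding in the vanishing pattern is the crux; once that is done the identity falls out mechanically, and I would present it as such.
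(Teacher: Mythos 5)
Your strategy is the same as the paper's: evaluate the structure equation on $e_\mu$ and feed in the vanishing pattern of the components from Proposition~\ref{prop1}, \eqref{A1}, \eqref{rank} and the connection forms \eqref{theta}; you correctly isolate the surviving term $\theta^5_\alpha(e_\mu)S^a_{p\,5}=\theta^5_\alpha(e_\mu)\,\delta_{ap}/\sqrt2$, which gives the stated dichotomy. But two of your intermediate steps are wrong or unresolved as written, and they are exactly the bookkeeping you flag as the crux, so they need to be fixed rather than deferred.

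First, for $\theta^t_p S^a_{t\alpha}$ you assert the sum "forces $t=5$, giving $\theta^5_p(e_\mu)S^a_{5\alpha}$," and you then kill it via $S^a_{5\alpha}=0$. That restriction is false: for $t=\nu$ in the $\mu$-range with $\nu\le 4$, the factor $S^a_{\nu\alpha}$ is an entry of the block $A_a$ in \eqref{A1} and is generically nonzero. The surviving piece is $\theta^\nu_p(e_\mu)S^a_{\alpha\nu}$, and it dies only because $\theta^\mu_p=S^a_{p\mu}\omega^a+S^p_{\alpha\mu}\omega^\alpha$ has no $\omega^\mu$-component, so $\theta^\nu_p(e_\mu)=0$ — this is the mechanism you gesture at in your closing paragraph, but you attach it to the wrong term. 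Second, your treatment of $-\theta^a_t S^t_{p\alpha}$ is left hanging ("I would therefore keep this term and see..."). Here $t$ is a normal index (this is the normal-connection part of the covariant derivative), and the term vanishes outright because $S^b_{p\alpha}=0$ for every normal direction $b$ when $\alpha\le 4$, by \eqref{0a} and \eqref{rank}; no mirror identity is involved. Finally, in $\theta^t_\alpha S^a_{pt}$ you should also record the contribution from $t=\nu=5$ in the $\mu$-range, since $C_a=B_a$ gives $S^a_{p,\nu=5}=\delta_{ap}/\sqrt2$ as well; that piece vanishes because $\theta^\nu_\alpha(e_\mu)=0$ by \eqref{theta}. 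With these corrections your computation closes and coincides with the paper's.
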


\begin{proof} Suppose $1\leq\alpha\leq 4$. Then
\begin{eqnarray}\nonumber
\aligned
3q^a_{\alpha p\mu}=3q^a_{\alpha pk}\omega^k(e_\mu)
&=(-\theta^a_t S^t_{\alpha p}+\theta^t_\alpha
S^a_{tp}+\theta^t_p S^a_{\alpha t})(e_\mu)\\
&=\theta^{\beta=5}_\alpha S^a_{\beta=5\; p}(e_\mu)+\theta^{\nu=5}_\alpha S^a_{\nu=5\; p}(e_\mu)
+\theta^\nu_p S^a_{\alpha\nu}(e_\mu)\\
&=\theta^\nu_p(e_\mu) S^a_{\alpha\nu}=0
\endaligned
\end{eqnarray}
if $p\neq 5$, because $S^a_{\beta=5\;p}=0$ by~\eqref{rank} and
$\theta^\nu_p(e_\mu)=0$ by~\eqref{theta}.

If $p=5$, then
$$
3q^a_{\alpha p\mu}
=(\theta^{\beta=5}_\alpha S^a_{\beta=5\; p}+\theta^\nu_p S^a_{\alpha\nu})(e_\mu).
$$
It follows that
$$
3q^a_{p\alpha\mu}=\theta^{\beta=5}_\alpha(e_\mu)/\sqrt{2}+\theta^\nu_p(e_\mu) S^a_{\alpha\nu}
=\theta^{\beta=5}_\alpha(e_\mu)/\sqrt{2}
$$
because $\theta^\nu_p(e_\mu)=0$.
\end{proof}

\begin{lemma}\label{lm9} For $\alpha=\mu=5$, we have 
$q^a_{p\alpha\mu}=0$. 
\end{lemma}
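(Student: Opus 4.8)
The plan is to compute $3q^a_{p\alpha\mu}$ for $\alpha=\mu=5$ directly from the structure equation
$$
3q^a_{ijk}\omega^k=dS^a_{ij}-\theta^a_tS^t_{ij}+\theta^t_iS^a_{tj}+\theta^t_jS^a_{it},
$$
specialized to $i=p$, $j=\alpha=5$, and then evaluated on $e_k=e_\mu$ with $\mu=5$. First I would observe that $dS^a_{p\,5}$ contributes nothing when paired against $e_\mu$ in the $\mu$-range in the sense needed, and more importantly that $S^a_{p\,5}=0$ by~\eqref{rank} (the only nonzero row of $B_a$ is the last, i.e.\ the $(5,a)$ entry, so $S^a_{p\,5}=\langle S(e_p,e_5),n_a\rangle$ vanishes unless $p$ is the label attached to the last $E_1$-slot, but here the second index is the $E_{-1}$-index $5$, not an $E_1$-index — so in fact $S^a_{p\,5}$ with $p$ in the $E_0$-range and $5$ in the $E_{-1}$-range is exactly a $C_a$-type entry, and by Lemma~\ref{lm5} applied to $C_a=B_a$ its only nonzero entry is the last row as well). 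Then I would expand the three connection terms using~\eqref{theta}: the term $-\theta^a_tS^t_{p\,5}$ runs $t$ over $p,\alpha,\mu$ ranges; $S^p_{p\,5}=0$ trivially, $S^\alpha_{p\,5}$ and $S^\mu_{p\,5}$ are handled by Corollary~\ref{sp} since the $\mu=5$ index forces them to vanish; the term $\theta^t_pS^a_{t\,5}$ needs $S^a_{t\,5}$ with $t$ in the $E_1$ or $E_{-1}$ range, which by~\eqref{A1} and~\eqref{rank} is nonzero only for very restricted indices, all killed once we evaluate on $e_\mu$ by~\eqref{theta}; and the term $\theta^t_5 S^a_{p\,t}$ similarly has $S^a_{p\,t}$ nonzero only in the last $E_1$ or $E_{-1}$ slot by~\eqref{rank}, and the surviving $\theta$-form evaluates to zero on $e_\mu$ with $\mu=5$ by~\eqref{theta} and Corollary~\ref{sp}.

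The cleanest route, however, is probably to exploit the symmetry of $q$ together with the two lemmas immediately preceding. Since $q$ is a fully symmetric tensor we have $q^a_{p\,5\,5}=q^a_{5\,5\,p}$, and the preceding lemma (for $\mu,\nu\le 4$ and the $\mu=5$ case) gives $3q^a_{\mu\nu p}=0$ when $\mu=5$; but that lemma's $\mu=5$ clause asserts $q^a_{\mu\nu p}=0$ only for $\nu\le 4$. So instead I would invoke the lemma on $q^a_{p\alpha\mu}$ with $p=a$ and $\alpha\le 4$: it gives $3q^a_{p\alpha\mu}=\theta^5_\alpha(e_\mu)/\sqrt2$, and by symmetry $q^a_{5\,5\,a}$ relates to $q^a_{a\,5\,5}$, which the previous lemma with the roles of $\alpha,\mu$ swapped (its $\mu=5$ branch) also partially constrains. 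Realistically I expect the honest proof is just the brute-force expansion above: write out the four groups of terms, substitute~\eqref{A1},~\eqref{rank} (equivalently~\eqref{RANK}) and Corollary~\ref{sp}, and check term by term that everything vanishes on $e_\mu$ with $\mu=5$ using the explicit formulas in~\eqref{theta}.

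The main obstacle, as in the neighboring lemmas, is bookkeeping: one must carefully track which $S$-components are forced to zero by Corollary~\ref{sp} (those with an $E_1$- or $E_{-1}$-index equal to $5$), which are zero by the block structure~\eqref{A1} (the $A_a$ have vanishing last row and column and vanishing $E_0$-blocks), and which $\theta$-forms vanish when evaluated on a particular frame vector by the structure~\eqref{theta}. The only place one could conceivably pick up a nonzero contribution is a term of the shape $\theta^{5}_{\cdot}(e_5)\cdot S^a_{\cdot\,5}$ or $\theta^{\cdot}_{5}(e_5)\cdot S^a_{p\,\cdot}$ where the surviving $S$-entry is the last-row entry $1/\sqrt2$ from~\eqref{rank}; one must verify that the matching $\theta$-form, read off from~\eqref{theta}, has no $\omega^\mu$-component with $\mu=5$, or that its coefficient $S^{\,\cdot}_{5\,\cdot}$ or $S_{\cdot\,5}^{\,\cdot}$ vanishes by Corollary~\ref{sp}. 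Granting those vanishings, every term dies and we conclude $q^a_{p\alpha\mu}=0$ for $\alpha=\mu=5$.

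\begin{proof}
Fix $1\le a\le 4$ and set $\alpha=\mu=5$. From
$$
3q^a_{p\,\alpha\,\mu}\,\omega^\mu(e_\mu)=\big(dS^a_{p\,\alpha}-\theta^a_t S^t_{p\,\alpha}+\theta^t_p S^a_{t\,\alpha}+\theta^t_\alpha S^a_{p\,t}\big)(e_\mu)
$$
we analyse the four groups of terms. By~\eqref{rank} (equivalently~\eqref{RANK}) and Lemma~\ref{lm5}, the only nonzero entry of the block $C_a$ is in its last row, so $S^a_{p\,\alpha}=0$ for the $E_0$-index $p$ and $\alpha=5$; hence the $dS^a_{p\,\alpha}$ contribution is absent. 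In $-\theta^a_t S^t_{p\,\alpha}$ the index $t$ runs over the $p$-, $\alpha$- and $\mu$-ranges: the $p$-range term has $S^{p'}_{p\,\alpha}$ with $\alpha=5$, which vanishes by Corollary~\ref{sp}; the $\alpha$- and $\mu$-range terms are $S^{\beta}_{p\,5}$ and $S^{\nu}_{p\,5}$, again zero by Corollary~\ref{sp}. In $\theta^t_p S^a_{t\,\alpha}$ we need $S^a_{t\,\alpha}=S^a_{t\,5}$ with $t$ in the $E_1$- or $E_{-1}$-range; by~\eqref{A1} and~\eqref{rank} this is nonzero only when $t$ is the last $E_{-1}$-slot, and then $\theta^t_p(e_\mu)=0$ by~\eqref{theta} since $\mu=5$. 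Finally in $\theta^t_\alpha S^a_{p\,t}$ the factor $S^a_{p\,t}$ with $E_0$-index $p$ is, by~\eqref{rank}, nonzero only when $t$ equals the last $E_1$- or $E_{-1}$-slot, and the corresponding $\theta^t_5(e_\mu)$ with $\mu=5$ vanishes by~\eqref{theta} together with Corollary~\ref{sp}. Thus every term on the right vanishes when evaluated on $e_\mu$, $\mu=5$, and therefore $q^a_{p\,\alpha\,\mu}=0$ for $\alpha=\mu=5$.
\end{proof}
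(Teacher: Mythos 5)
There is a genuine gap, and it traces back to your choice of which slot of $q^a_{ijk}$ to realize as the differentiated one. You expand $3q^a_{p\,5\,5}$ by taking $(i,j)=(p,\alpha=5)$ and evaluating on $e_{\mu=5}$, i.e.\ you covariantly differentiate $S^a_{p,\alpha=5}$ in the direction $e_{\mu=5}$. But $S^a_{p,\alpha=5}$ is exactly the \emph{last-row} entry of $B_a$: by~\eqref{rank} it equals $\delta_{pa}/\sqrt{2}$, not $0$. Your stated reason for discarding $dS^a_{p\alpha}$ ("the only nonzero entry of $C_a$ is in its last row, so $S^a_{p\alpha}=0$ for $\alpha=5$") asserts the opposite of what \eqref{rank} says — $\alpha=5$ \emph{is} the last row. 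Worse, this nonzero entry re-enters in the group $\theta^t_p S^a_{t\alpha}$: you let $t$ range only over the $E_1$- and $E_{-1}$-slots, but $t$ also runs over the $E_0$-range $q$, and $S^a_{q,\alpha=5}=\delta_{qa}/\sqrt{2}$ produces the term $\theta^{q=a}_p(e_{\mu=5})/\sqrt{2}$. The form $\theta^q_p$ is the \emph{internal} connection form of the $E_0$-subbundle; it is not among the cross-terms listed in \eqref{theta} and there is no a priori reason for it to vanish on $e_{\mu=5}$. So your term-by-term vanishing fails at precisely this point, and nothing in \eqref{theta}, \eqref{A1}, \eqref{rank} or Corollary~\ref{sp} rescues it. (A secondary confusion: in $-\theta^a_tS^t_{ij}$ the superscript $t$ is a normal index, so quantities like $S^{\beta}_{p\,5}$ with $\beta$ in the $E_1$-range do not exist in the paper's notation, and Corollary~\ref{sp} — which concerns $S^p_{\alpha\mu}$ with both lower indices in the $\alpha,\mu$-ranges — does not apply to them.)

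The paper's proof sidesteps both problems by using the symmetry of $q$ the other way around: it computes $3q^a_{\mu\alpha p}$ with $i=\mu=5$, $j=\alpha=5$, evaluated on $e_p$. Then the base term $S^a_{\mu\alpha}=S^a_{5,5}$ is the $(5,5)$-entry of $A_a$, which is identically zero by \eqref{A1}, so $dS^a_{\mu\alpha}=0$ with no frame-smoothness caveat; and the only connection forms that survive against a nonzero $S$-entry are the cross-terms $\theta^q_\mu$ and $\theta^q_\alpha$, which by \eqref{theta} have only $\omega^a$- and $\omega^{\alpha}$- (respectively $\omega^\mu$-) components and hence vanish on $e_p$. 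In short: differentiate the identically zero component $S^a_{5_\mu 5_\alpha}$ in the $e_p$-direction rather than the nonzero component $S^a_{p\,5_\alpha}$ in the $e_{5_\mu}$-direction. Your first instinct in the preamble (use the full symmetry of $q$ and pick the most convenient slot) was the right one; the slot you ultimately picked is the one that does not close up.
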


\begin{proof} We have, by~\eqref{A1}, that the fifth row and column of
$A_a$ is identically zero, so that
\begin{eqnarray}\nonumber
\aligned
3q^a_{\mu\alpha p}&=(3q^a_{\mu\alpha k}\omega^k)(e_p)=(-\theta^a_tS^t_{\mu\alpha}
+\theta^t_\mu S^a_{t\alpha}+\theta^t_\alpha S^a_{\mu t})(e_p)\\
&=(\theta^q_\mu S^a_{q\alpha}+\theta^q_\alpha S^a_{\mu q})(e_p)
=0
\endaligned
\end{eqnarray}
by~\eqref{theta}.
\end{proof}

It follows from Lemmas~\ref{lm6} through~\ref{lm9} 
that the third fundamental form $q$ of $M_{+}$
of the isoparametric hypersurface under consideration
is, for $1\leq a\leq 4$,
\begin{eqnarray}\nonumber
\aligned
q^0&:=-2\sum_{p,\alpha,\mu=1}^4 S^p_{\alpha\mu}x_\alpha y_\mu z_p\\
q^a&:=
Fz_a+\sqrt{2}(x_5-y_5)\sum_{\alpha,\mu=1}^4 S^{p=a}_{\alpha\mu}x_\alpha y_\mu  
+\sum_{p,\alpha,\beta=1}^4 U^a_{\alpha\beta p}x_\alpha x_\beta z_p\\
&+\sum_{p,\mu,\nu=1}^4 V^a_{\mu\nu p}y_\mu y_\nu z_p
\endaligned
\end{eqnarray}
where
$$
F:=\sum_{(\alpha,\mu)\neq (5,5)} f_{\alpha\mu} x_\alpha y_\mu
$$
with $f_{\alpha\mu}$ either $\sqrt{2}\theta^5_{\alpha}(e_\mu)$ or
$\sqrt{2}\theta^5_\mu(e_{\alpha})$, and 

\begin{eqnarray}
\aligned
U^a_{\alpha\beta p}&:=1/2\sum_{\mu=1}^4
(S^p_{\alpha\mu}S^a_{\beta\mu}+S^p_{\beta\mu}S^a_{\alpha\mu})\\
V^a_{\mu\nu p}&:=-1/2\sum_{\alpha=1}^4
(S^p_{\alpha\mu}S^a_{\alpha\nu}+S^p_{\alpha\nu}S^a_{\alpha\mu})
\endaligned
\end{eqnarray}
with $S^a_{\alpha\mu}$ the data in~\eqref{A1}.

\begin{lemma}\nonumber
$F=0$.
\end{lemma}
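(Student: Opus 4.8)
The goal is to show $F=0$, i.e. that all the coefficients $f_{\alpha\mu}=\sqrt2\,\theta^5_\alpha(e_\mu)$ or $\sqrt2\,\theta^5_\mu(e_\alpha)$ appearing in the third fundamental form vanish. The plan is to feed the expression for $q$ derived above back into the Ozeki--Takeuchi expansion formula~\eqref{OT}, which must reproduce the Cartan--M\"unzner polynomial $F(tx+y+w)$, and read off constraints. The key structural fact is that $q^a$ contains the term $Fz_a$ with $F$ a bilinear form in the $x_\alpha y_\mu$ with $(\alpha,\mu)\neq(5,5)$; since $q^a$ must be a cubic with very constrained shape, and since~\eqref{OT} couples $q^a$ with $p_a$, $\langle\nabla p_i,\nabla p_j\rangle$ and the purely tangential part of $F(tx+y+w)$, the presence of an uncontrolled bilinear factor $F$ multiplying $z_a$ will be forced to be inconsistent unless $F\equiv 0$.

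Concretely, I would first substitute $t=0$ and restrict to the slice $w=w_a n_a$ (only the $a$-th normal coordinate nonzero, all others zero) in~\eqref{OT}, so that the term $-8\sum_i q^i w_i$ contributes exactly $-8\,q^a w_a$, and match against the known polynomial identity satisfied by the Cartan--M\"unzner polynomial together with $|\nabla F|^2=16|x|^6$ (here $g=4$). The cross terms in~\eqref{OT} that are linear in $w_a$ and cubic in $y$ are governed on one side by $-8q^a$ and on the other by terms coming from $2\sum_{i,j}\langle\nabla p_i,\nabla p_j\rangle w_iw_j$ differentiated appropriately and from the $8(\sum p_iw_i)t$ term after completing squares; isolating the coefficient of $z_a$ (the $E_0$-coordinate paired with $n_a$) in the degree-$(3)$ part of $q^a$ shows that the $Fz_a$ summand must equal a specific expression built purely from the $S^a_{\alpha\mu}$ data of~\eqref{A1}. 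But by~\eqref{A1} those matrices $A_a$ have vanishing fifth row and column, so the resulting expression has no $x_5$ or $y_5$ terms; comparing with $F=\sum_{(\alpha,\mu)\neq(5,5)} f_{\alpha\mu}x_\alpha y_\mu$, which by Corollary~\ref{sp} and the preceding lemmas is precisely the part of the bilinear form that \emph{does} involve the index $5$, forces $f_{\alpha\mu}=0$ whenever $\alpha=5$ or $\mu=5$, and then a symmetry/reindexing argument (using that $q$ is totally symmetric and the roles of the $E_{+}$ and $E_{-}$ ranges are dual under the $M_{+}\leftrightarrow M_{-}$ correspondence of Lemma~2) kills the remaining $f_{\alpha\mu}$ as well.

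Alternatively, and perhaps more cleanly, I would exploit the total symmetry of $q$ directly: the coefficient $f_{\alpha\mu}$ sits in $q^a_{p\alpha\mu}$ with $p=a$, $\alpha$ or $\mu$ equal to $5$; by symmetry of $q^a$ in its three lower indices this coefficient also appears as $q^a_{\alpha p\mu}$ etc., and one can compare the formula $3q^a_{p\alpha\mu}=\theta^5_\alpha(e_\mu)/\sqrt2$ (for $p=a$, $\mu\le4$) against the expression for $q^a$ obtained by cutting with $p^*_0,p^*_1,p^*_2$ as in the codimension-2 arguments of Section 4, or against the structure equations $d\theta=\theta\wedge\theta$ applied to the connection forms in~\eqref{theta}. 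The Gauss and Codazzi equations, together with the now-explicit second fundamental form of Proposition~\ref{prop1}, pin down $\theta^5_\alpha$ on $E_{-1}$ in terms of the $S^p_{\alpha\mu}$; plugging in the vanishing relations $S^p_{5\mu}=S^p_{\alpha 5}=0$ from Corollary~\ref{sp} collapses these to zero.

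\textbf{Main obstacle.} The delicate point is bookkeeping: keeping track of exactly which monomials in~\eqref{OT} — or which entries of the connection matrix — produce the coefficient $f_{\alpha\mu}$, and ensuring that no other contribution (from the $U^a_{\alpha\beta p}$, $V^a_{\mu\nu p}$ terms, or from $\langle\nabla p_i,\nabla p_j\rangle$) can silently cancel it. The essential leverage that makes the argument close is that the $A_a$ have a zero fifth row/column by~\eqref{A1}, so the ``honest'' cubic part of $q^a$ is confined to indices $1,\dots,4$, leaving $F$ — which by construction lives in the complementary index set involving $5$ — with nothing to match against, hence $F=0$.
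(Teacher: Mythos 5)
There is a genuine gap, and it originates in a misreading of what $F$ is. The sum $F=\sum_{(\alpha,\mu)\neq(5,5)}f_{\alpha\mu}x_\alpha y_\mu$ runs over \emph{all} index pairs except $(5,5)$; the superscript $5$ sits on the connection form $\theta^5_\alpha$, not on the monomial indices, so the bulk of $F$ consists of terms $x_\alpha y_\mu$ with $\alpha,\mu\le 4$. Your first route, even if the matching against~\eqref{OT} were actually carried out, would therefore at best reach the coefficients $f_{5\mu}$ and $f_{\alpha 5}$, and the ``symmetry/reindexing argument'' you invoke for the remaining (and dominant) coefficients is not available: the total symmetry of $q$ has already been spent in deriving $3q^a_{a\alpha\mu}=\theta^5_\alpha(e_\mu)/\sqrt{2}$, and the duality between $M_+$ and $M_-$ exchanges focal manifolds rather than producing new relations among the $f_{\alpha\mu}$. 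Your second route founders earlier: for $\alpha\le 4$, $\theta^5_\alpha$ is a connection form \emph{within} the eigenspace $E_1$, and~\eqref{theta} (equivalently, Codazzi) determines only the cross-eigenspace components of the connection in terms of $S$; the intra-eigenspace components are free data, so they are not ``pinned down in terms of the $S^p_{\alpha\mu}$'' and do not collapse via Corollary~\ref{sp}.

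What actually closes the argument is one specific Ozeki--Takeuchi identity distilled from $|\nabla F|^2=16|x|^6$, namely $p_0q^0+p_1q^1+\cdots+p_4q^4=0$. In the product $p_aq^a$, the bilinear part $2\sum_{\beta,\nu\le 4}(A_a)_{\beta\nu}x_\beta y_\nu$ of $p_a$ times the summand $Fz_a$ of $q^a$ produces monomials $x_\alpha x_\beta y_\mu y_\nu z_a$ of bidegree $(2,2,1)$ in $(x,y,z)$ with a single factor $z_a$; inspection of every other product in the sum (including $p_0q^0$ and the $(x_5\pm y_5)$, $U$, $V$ summands of the $q^b$) shows no other source of such monomials. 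Hence their coefficients vanish individually, and the nondegeneracy of $A_a$ on the indices $\le 4$ forces every $f_{\alpha\mu}=0$. You correctly sensed that the constraint must come from the Cartan--M\"unzner equations fed through the expansion~\eqref{OT}, but without isolating this particular identity and doing the bidegree bookkeeping the argument does not close.
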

\begin{proof} $p_aq_a$ contributes
$$
f_{\alpha\mu} x_\alpha x_\beta y_\mu y_\nu z_{a},
$$
for each $1\leq a\leq 4,$ and $1\leq\beta,\nu\leq 4,$ that is not shared by
any other terms in the equation~\cite[I, p 530]{OT}
\begin{equation}\label{PQ}
p_0q^0+p_1q^1+\cdots+p_4q^4=0.
\end{equation}
\end{proof}

\section{The interplay between the second and third fundamental forms}
We show in this section that the third
fundamental form of the isoparametric hypersurface under consideration is
that of the homogeneous example for the multiplicity pair $(m_1,m_2)=(4,5)$
or $(6,9)$, provided in the latter case the hypersurface is not the inhomogeneous one constructed
by Ferus, Karcher and M\"{u}nzner. We thus arrive at the classification in these two cases.

\subsection{The $(4,5)$ case}
To set it in the intrinsic quaternionic framework, let us now identify the
normal space of $M_{+}$ spanned by $n_0,n_1,\cdots,n_4$ with
${\mathbb R}n_0\oplus {\mathbb H}$, where $n_1,\cdots,n_4$ are identified
with $1,i,j,k$, respectively. 

Then the second fundamental form in~\eqref{2nd}
can be written succinctly in the vector form
as
\begin{eqnarray}\label{pw}
\aligned
&<p,w_0n_0+W>\\
&=(|X|^2+(x_5)^2-|Y|^2-(y_5)^2)w_0+2<\overline{Y}X,W>\\
&+\sqrt{2}(x_5+y_5)<Z,W>
\endaligned
\end{eqnarray}
where
\begin{eqnarray}\nonumber
\aligned
X&:=x_1+x_2i+x_3j+x_4k,\qquad Y:=y_1+y_2i+y_3j+y_4k,\\
Z&:=z_1+z_2i+z_3j+z_4k,\qquad W:=w_1+w_2i+w_3j+w_4k
\endaligned
\end{eqnarray}
with normal coordinates $w_0,w_1,\cdots,w_4$ in the respective normal directions
$n_0,\cdots,n_4$, and $e_\alpha,e_\mu$ and $e_p$ basis vectors are also identified 
with $1,i,j,k$ in the natural way. (Recall $X,Y$ and $Z$ parametrize respectively the $E_{1},E_{-1}$ and $E_0$ spaces.)
Thus there will be no confusion to set
$$
(e_1,e_2,e_3,e_4):=(1,i,j,k)
$$
for notational convenience. Let us define
\begin{equation}\label{circ}
X\circ Y:=\sum_{p=1}^4  S^p(X,Y)\;e_p.
\end{equation}
The vector-valued third fundamental form is now

\begin{eqnarray}\label{qw}
\aligned
&<q,w_0n_0+W>\\
&=-2<X\circ Y,Z>w_0+\sqrt{2}(x_5-y_5)<X\circ Y,W>\\
&+\sum_{\mu=1}^4<X\circ e_\mu,Z><\overline{e_\mu}X,W>\\
&-\sum_{\alpha=1}^4<e_\alpha\circ Y,Z><\overline{Y}e_\alpha,W>\\
&=-2<X\circ Y,Z>w_0+\sqrt{2}(x_5-y_5)<X\circ Y,W>\\
&+<X\circ(X\overline{W}),Z>-<(YW)\circ Y,Z>
\endaligned
\end{eqnarray}
where $Xe_\mu,e_\alpha Y,X\overline{W}$ and $YW$, etc.,
are quaternionic products.

Define the $4$-by-$4$ matrices
\begin{equation}\label{T}
T^p:=\begin{pmatrix}S^p_{\alpha\mu}\end{pmatrix},\quad p=1,\cdots,4.
\end{equation}
There holds
$$
T^p_{\alpha\mu}=<e_\alpha\circ e_\mu,e_p>.
$$
We remark that in the homogeneous case these matrices are obtained by
collecting half of the coefficients, respectively, of the $z_1,\cdots,z_4$ coefficients
of $-\tilde{q}^0$ in~\eqref{q0}, which are
\begin{eqnarray}\label{T1}
\aligned
\tilde{T}^1:=\begin{pmatrix}-J&0\\0&-J\end{pmatrix},&\qquad
\tilde{T}^2:=\begin{pmatrix}I&0\\0&-I\end{pmatrix},\\
\tilde{T}^3:=\begin{pmatrix}0&J\\-J&0\end{pmatrix},&\qquad
\tilde{T}^4:=\begin{pmatrix}0&I\\I&0\end{pmatrix}.
\endaligned
\end{eqnarray}
Moreover, $T^p$ are orthogonal by~\eqref{mirror}
because $S^a_{p\alpha}=0$ for all $1\leq\alpha\leq 4$ by~\eqref{rank}.
Note that
\begin{equation}\label{useful}
<X\circ Y,e_p>=<T^p(Y),X>.
\end{equation}

\begin{lemma}
\begin{equation}\label{YZ}
<(YZ)\circ Y,Z>=0
\end{equation}
for all $Y,Z\in{\mathbb H}$.
\end{lemma}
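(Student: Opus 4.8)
The plan is to prove $<(YZ)\circ Y,Z>=0$ as an identity in the polynomial ring, exploiting the fundamental relation \eqref{PQ}, namely $p_0q^0+p_1q^1+\cdots+p_4q^4=0$, together with the vector forms \eqref{pw} and \eqref{qw} of the second and third fundamental forms. First I would substitute specific (possibly complex) values into \eqref{PQ}, or equivalently into the single identity obtained by pairing $p$ and $q$ against a common normal direction $w_0n_0+W$ and collecting terms. The natural specialization is to set $X=0$, so that the $E_1$-coordinates vanish; then $p_a$ and $q^a$ simplify dramatically, because every term in \eqref{qw} involving $X\circ(\,\cdot\,)$ drops out, leaving only the $<(YW)\circ Y,Z>$ contribution in $<q,w_0n_0+W>$, while $<p,w_0n_0+W>$ reduces to $(-|Y|^2-(y_5)^2)w_0+\sqrt{2}\,y_5<Z,W>$.

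Next I would extract from \eqref{PQ} the coefficient of the monomial type that isolates $<(YZ)\circ Y,Z>$. Concretely, after setting $X=0$, equation \eqref{PQ} becomes a polynomial identity in the variables $Y,Z,y_5,w_0,W$ (and $x_5$, though with $X=0$ the $x_5$ terms have limited reach). One reads off the $w_0$-component of the relation $p_0q^0+\sum_a p_aq^a=0$: the term $p_0q^0$ contributes (a multiple of) $(|Y|^2+(y_5)^2)\,<X\circ Y,Z>$, which vanishes when $X=0$, and $\sum_a p_aq^a$ with $W$ playing the role of the normal-direction variable will, after the substitution $W\mapsto Z$ (or after matching the appropriate quadratic-in-$Z$ piece), produce a term proportional to $|Y|^2$ or $(y_5)^2$ times $<(YZ)\circ Y,Z>$. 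Since that coefficient polynomial is not identically zero, the identity forces $<(YZ)\circ Y,Z>=0$.

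An alternative, possibly cleaner route is to use the orthogonality of the matrices $T^p$ established just above the lemma, via $<X\circ Y,e_p>=<T^p(Y),X>$ from \eqref{useful}. Writing $<(YZ)\circ Y,Z>=\sum_p (YZ)_p\,<e_p\circ Y,Z>$ where $(YZ)_p$ is the $e_p$-component of the quaternionic product $YZ$, one converts the claim into a statement about the bilinear forms $<T^p(Y),\cdot>$ and quaternion multiplication; the symmetry $S^p_{\alpha\mu}=S^p_{\mu\alpha}$ (symmetry of the second fundamental form) combined with the skew/symmetric structure forced by \eqref{PQ} should make the sum telescope or cancel in conjugate pairs. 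Either way, the key input is \eqref{PQ}.

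The main obstacle I anticipate is bookkeeping: equation \eqref{PQ} is a homogeneous polynomial identity in many variables, and the term $<(YZ)\circ Y,Z>$ is cubic in the mixed sense (degree $2$ in $Y$, degree $1$ in $Z$ coming from $YZ$, times degree $1$ in $Z$ from the pairing), so I must carefully choose which coefficient to extract so that no other term of \eqref{PQ} contributes to it — much as the preceding lemma ($F=0$) was proved by finding a monomial $f_{\alpha\mu}x_\alpha x_\beta y_\mu y_\nu z_a$ "not shared by any other terms." Identifying that uniquely-supported monomial here, after the $X=0$ specialization, is the crux; once it is pinned down, the vanishing is immediate.
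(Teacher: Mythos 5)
Your primary route is the paper's proof: the paper sets $X=x_5=0$ in \eqref{pw} and \eqref{qw}, whereupon $q^0=0$, $p_a=\sqrt{2}\,y_5<Z,e_a>$ and $q^a=-<(Ye_a)\circ Y,Z>$, so that \eqref{PQ} collapses in one line (by $\sum_a <Z,e_a>\,Ye_a=YZ$) to $-\sqrt{2}\,y_5<(YZ)\circ Y,Z>=0$ — no coefficient extraction is needed at all. The only slip in your sketch is the anticipated multiplier: it is $\sqrt{2}\,y_5$ (coming from $p_a$, $a\geq 1$), not $|Y|^2$ or $(y_5)^2$, but since your argument only uses that the multiplier is a nonzero polynomial, the reasoning stands.
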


\begin{proof} Let us set $X=x_5=0$ in~\eqref{pw} and~\eqref{qw}. Then
$$
p_0=-|Y|^2-(y_5)^2,\qquad q^0=0
$$
and for $1\leq a\leq 4$
$$
p_a=\sqrt{2}y_5<Z,e_a>,\qquad q^a=-<(Ye_a)\circ Y,Z>
$$
so that~\eqref{PQ} is
$$
0=\sum_{a=0}^4p_aq^a=-\sqrt{2}y_5<Z,e_a> <(Ye_a)\circ Y,Z>=-\sqrt{2}y_5<(YZ)\circ Y,Z>.
$$
\end{proof}

\begin{corollary}\label{qh} The matrices given in~\eqref{T} are
\begin{eqnarray}\nonumber
\aligned
T^1=\begin{pmatrix}0&a&b&c\\-a&0&-d&-e\\-b&d&0&f\\-c&e&-f&0
\end{pmatrix}
,&\qquad T^2=\begin{pmatrix}a&0&g&-h\\0&a&-i&-j\\j&-h&-f&0\\-i&-g&0&-f\end{pmatrix}\\
T^3=\begin{pmatrix}b&-g&0&k\\-j&-e&-k&0\\0&-l&b&-j\\l&0&-g&-e\end{pmatrix},
&\qquad T^4=\begin{pmatrix}c&h&-k&0\\i&d&0&-k\\-l&0&d&-h\\0&-l&-i&c\end{pmatrix}
\endaligned
\end{eqnarray}                                                                  
for some twelve unknowns $a$ to $l$.
\end{corollary}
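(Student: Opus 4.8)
The plan is to extract the twelve-parameter form of $T^1,\dots,T^4$ by combining three ingredients: Lemma~\ref{lm6} (the vanishing $q^0_{ijk}=0$ when two indices lie in the same range), the symmetry $T^p_{\alpha\mu}=S^p_{\alpha\mu}=S^p_{\mu\alpha}$ inherited from the second fundamental form being symmetric, and the quaternionic identity~\eqref{YZ}. Concretely, I would first write out $T^p=(S^p_{\alpha\mu})_{1\le\alpha,\mu\le4}$ as a $4\times4$ matrix of unknowns, one matrix for each $p\in\{1,2,3,4\}$, giving $64$ entries a priori. The symmetry $S^p_{\alpha\mu}=S^p_{\mu\alpha}$ is \emph{not} automatic here because the two lower indices lie in different eigenspaces $E_1$ and $E_{-1}$, so I should instead use the relation coming from Lemma~\ref{Q0}/Lemma~\ref{lm7}, namely $3q^0_{p\alpha\mu}=-S^p_{\alpha\mu}$, together with the total symmetry of the third fundamental form $q^0$ in its three arguments. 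Total symmetry of $q^0_{p\alpha\mu}$ forces a web of coincidences among the $S^p_{\alpha\mu}$ across different values of $p$, $\alpha$, $\mu$: permuting $p\leftrightarrow\alpha$ (viewing both $e_p$ and $e_\alpha$ as $1,i,j,k$) identifies entries of different $T^p$'s, which is exactly what collapses $64$ unknowns down toward a dozen.

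Next I would feed in Lemma~\ref{lm6}: $q^0_{ijk}=0$ whenever two of $i,j,k$ lie in the same range. Since $e_1,e_2,e_3,e_4$ serve simultaneously as the bases of $E_0$, $E_1$, and $E_{-1}$, the statement ``$p=\alpha$ forces $q^0_{p\alpha\mu}=0$'' translates into $S^{p}_{p\mu}=0$ for each fixed $p$ — i.e. the diagonal-type entries vanish — and similarly the symmetrized combinations with $\alpha=\mu$ vanish. Tracking these through the symmetry identifications kills all the ``repeated-index'' entries and enforces skew-type patterns in appropriate $2\times2$ blocks. At this stage the matrices will already look structurally like~\eqref{T1}, i.e. built from $\pm I,\pm J$ blocks, but with independent scalar unknowns multiplying the various block slots.

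Then I would impose~\eqref{YZ}, $\langle(YZ)\circ Y,Z\rangle=0$ for all $Y,Z\in\mathbb H$. Expanding $Y=\sum y_\alpha e_\alpha$, $Z=\sum z_\mu e_\mu$ and using $\langle X\circ Y,e_p\rangle=\langle T^p(Y),X\rangle$ from~\eqref{useful}, the left side becomes a cubic polynomial in the $y$'s and $z$'s whose coefficients are linear combinations of the surviving $S^p_{\alpha\mu}$; setting every coefficient to zero gives a linear system. Solving it — really just bookkeeping, since the quaternion multiplication table makes $YZ$ explicit — pins down the remaining relations and leaves precisely twelve free scalars, which I would name $a$ through $l$ so as to land on the displayed matrices. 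A convenient check: specializing to the homogeneous example~\eqref{T1} should correspond to a specific choice of $a,\dots,l$ (up to sign conventions), confirming the parametrization is neither over- nor under-determined.

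The main obstacle I anticipate is the bookkeeping in reconciling the three index-identifications at once: because the \emph{same} symbols $e_1,\dots,e_4$ play three different roles (normal direction $e_p$, $E_1$-vector $e_\alpha$, $E_{-1}$-vector $e_\mu$), it is easy to conflate or mislabel entries when transporting a vanishing from one $T^p$ to another via the symmetry of $q^0$. Getting the signs right in the $J$-blocks — i.e. distinguishing $+J$ from $-J$ slots — requires care with the quaternion multiplication orientation fixed in~\eqref{quaternion} ($A_2A_3=-A_4$) and with the sign convention for $q^i$ noted after~\eqref{OT}. Once the sign conventions are fixed consistently, the linear algebra is routine and the twelve-parameter normal form follows.
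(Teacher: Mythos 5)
Your third ingredient --- expanding $<(YZ)\circ Y,Z>=0$ from~\eqref{YZ} into a polynomial identity in $Y$ and $Z$ and extracting all the coefficient relations --- is in fact the \emph{entire} content of the paper's proof: the paper polarizes~\eqref{YZ} separately in $Y$ and in $Z$ to obtain the two identities~\eqref{symm}, and then substitutes basis quaternions systematically (e.g.\ $Z=1$ gives the skew-symmetry of $T^1$; $Z=i$ with various choices of $Y_1,Y_2$, combined with the second identity, fills in $T^2$; and so on). Carried out faithfully, that step alone produces the twelve-parameter normal form, with no other input.

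The difficulty is with your first two ingredients, which would impose constraints that are false. The component $S^p_{\alpha\mu}=-3q^0(e_p,e_\alpha,e_\mu)$ involves three \emph{different} vectors drawn from the three mutually orthogonal spaces $E_0$, $E_{1}$, $E_{-1}$; the total symmetry of $q^0$ only permutes these three arguments (which is already encoded in writing the single number $S^p_{\alpha\mu}$) and yields no identification between entries of different $T^p$'s under a permutation of the \emph{numerical labels} $p\leftrightarrow\alpha$. Likewise Lemma~\ref{lm6} concerns two indices lying in the same range (e.g.\ $q^0_{pqk}$ or $q^0_{\alpha\beta k}$); it says nothing about $q^0_{p\alpha\mu}$ when $p$ and $\alpha$ merely carry the same numeral, so your claimed vanishing $S^p_{p\mu}=0$ does not follow --- and it is refuted by the homogeneous example, where $\tilde{T}^2_{22}=1\neq 0$ by~\eqref{T1}, i.e.\ $a\neq 0$ in the corollary's notation. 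Your own proposed sanity check against~\eqref{T1} would have caught this: adding those extra linear conditions over-determines the system and cuts the solution space below twelve parameters, excluding the homogeneous example itself. Drop the first two ingredients and the remaining computation is exactly the paper's argument.
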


\begin{proof}  
Polarizing~\eqref{YZ} with respect to $Y$ and $Z$, respectively,
we get
\begin{eqnarray}\label{symm}
\aligned
<(Y_1Z)\circ Y_2,Z>&=-<(Y_2Z)\circ Y_1,Z>,\\
<(YZ_1)\circ Y,Z_2>&=-<(YZ_2)\circ Y,Z_1>
\endaligned
\end{eqnarray}
Setting $Z=1$ in the first equation of~\eqref{symm}, we see $T^1_{\alpha\mu}=-S^1_{\alpha\mu}$ so that $T^1$ is
skew-symmetric. Setting $Z=i$ and let $Y_1=Y_2=1$, we obtain
$$
T^2_{21}=-T^2_{21}=0,
$$
while setting $Y_1=1,Y_2=i$ yields
$$
T^2_{22}=T^2_{11}.
$$
However, setting $Z_1=1,Z_2=i$ and $Y=1$ in the second equation of~\eqref{symm},
we see
$$
T^2_{11}=-T^1_{21}=a.
$$
Thus we get the upper left 2-by-2 block of $T^2$. Continuing this fashion finishes
the proof.
\end{proof}

\begin{corollary}\label{only} We may assume $a=f=1$ and
the only nonzero entries in the matrices in Corollary~{\rm \ref{qh}}
are $a,f,k$ and $l$.
\end{corollary}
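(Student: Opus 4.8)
The plan is to determine the twelve parameters $a,\dots,l$ of Corollary~\ref{qh} by feeding every available algebraic identity into the parametrization and then using the residual orthonormal-frame freedom left after Proposition~\ref{prop1} to reach the homogeneous normal form.

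First I would assemble the constraints on $T^1,\dots,T^4$ beyond those already used to prove Corollary~\ref{qh}. Each $T^p$ is orthogonal (noted after~\eqref{T1}); moreover, the two forms of the mirror identity~\eqref{mirror}, which on the $\alpha,\mu\le 4$ block reduce to their last terms because $S^a_{p\alpha}=S^a_{p\mu}=0$ there by~\eqref{rank} while Corollary~\ref{sp} removes the $\mu=5$ contributions, say precisely that
\begin{equation}\nonumber
T^p(T^q)^{tr}+T^q(T^p)^{tr}=2\delta_{pq}I,\qquad (T^p)^{tr}T^q+(T^q)^{tr}T^p=2\delta_{pq}I;
\end{equation}
that is, the $T^p$ form a Clifford-like system. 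In particular $T^1$ is a skew-symmetric orthogonal matrix, hence a complex structure, and reading off $(T^1)^2=-I$ from the entries in Corollary~\ref{qh} already forces $a^2=f^2$, $b^2=e^2$, $c^2=d^2$, $a^2+b^2+c^2=1$, together with sign relations among $a,\dots,f$; the analogous computations for $T^2,T^3,T^4$ and the mixed anticommutation relations $T^1(T^2)^{tr}+T^2(T^1)^{tr}=0$, etc.\ (and their transposed forms), cut the free parameters down sharply. I would also use the fully polarized form of~\eqref{YZ} --- symmetrizing $<(YZ)\circ Y,Z>=0$ in both $Y$ and $Z$ --- which ties the $T^p$ to quaternion multiplication, and, where convenient, comparison of individual monomials in $p_0q^0+\cdots+p_4q^4=0$ from~\eqref{PQ}.

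Next I would invoke the frame freedom. The normal form~\eqref{A1},~\eqref{rank} established in Proposition~\ref{prop1} is preserved under simultaneous orthogonal changes of the quaternionic ($4$-dimensional) parts of the $E_1$- and $E_{-1}$-frames, which conjugate every $T^p$ by one common $g\in O(4)$, together with an orthogonal change of the normals $n_1,\dots,n_4$ and of the $E_0$-frame, which orthogonally remixes the superscript $p$ of $T^p$ while keeping the $A_a$ right quaternion multiplications and the $B_a=C_a$ of the form~\eqref{rank}. The Clifford-like relations above identify $(T^1,T^2,T^3,T^4)$, up to such a change, with a quaternionic structure on $\mathbb{R}^4$, which is unique up to conjugation; hence one can transport it to the homogeneous model $(\tilde T^1,\tilde T^2,\tilde T^3,\tilde T^4)$ of~\eqref{T1}, for which indeed $a=f=1$ and the only other possibly nonzero entries are $k$ and $l$. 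This is the assertion.

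The main obstacle is the middle step: untangling the overdetermined quadratic-plus-sign system produced by the three remaining $T^p$ together with the anticommutation relations, and --- more conceptually --- verifying exactly which simultaneous frame changes on the normal space, $E_0$, $E_1$ and $E_{-1}$ are admissible, so that the orbit of the admissible group genuinely contains the homogeneous configuration. Once the symmetry pattern of the $T^p$ (one skew-symmetric, the remaining three symmetric, as in~\eqref{T1}) has been extracted from the identities, the explicit normalizing frame change should follow with little further effort.
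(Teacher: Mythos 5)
Your preparatory observations are correct: the mirror identity~\eqref{mirror}, combined with~\eqref{rank} and Corollary~\ref{sp}, does give $T^p(T^q)^{tr}+T^q(T^p)^{tr}=(T^p)^{tr}T^q+(T^q)^{tr}T^p=2\delta_{pq}I$ on the $\alpha,\mu\le 4$ block, and the diagonal of $(T^1)^2=-I$ does force $a^2=f^2$, $b^2=e^2$, $c^2=d^2$. The gap is in the step that carries the whole argument: the assertion that these Clifford-like relations make $(T^1,\dots,T^4)$ ``unique up to conjugation'' by an admissible frame change, so that it can be transported to $(\tilde T^1,\dots,\tilde T^4)$. As stated this proves too much: if the quadruple could really be carried onto the homogeneous model you would have $k=l=1$ already, whereas the paper must bring in genuinely new input --- the Ozeki--Takeuchi identities for $\langle\nabla q^a,\nabla q^b\rangle$ via Lemma~\ref{same} and~\eqref{important} in Corollary~\ref{decisive} --- precisely because the data used up to Corollary~\ref{only} do not determine those signs. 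Concretely, the anticommuting complex structures $V^p:=T^p(T^1)^{tr}$, $p=2,3,4$, give an irreducible $C_3$-module on ${\mathbb R}^4$, and there are two inequivalent such modules (distinguished by the sign of $V^2V^3V^4$), so abstract ``uniqueness up to conjugation'' is false; and even within one equivalence class you must check that the conjugating element lies in the very restricted group of frame changes that preserves~\eqref{A1} and~\eqref{rank} simultaneously on $E_1$, $E_{-1}$, $E_0$ and the normal space. You flag this verification yourself as the main obstacle, but it is not an afterthought --- it is the entire content of the corollary, and your plan does not overcome it.

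For comparison, the paper's proof needs none of the Clifford machinery: it uses only that $T^1$ is orthogonal and skew-symmetric, so $l_2:=T^1(e_1)$ is a unit vector orthogonal to $e_1$, i.e.\ a unit purely imaginary quaternion. Completing $e_1,l_2$ to a quaternion basis gives an automorphism $\sigma$ of ${\mathbb H}$, which is an admissible frame change because it preserves the second fundamental form~\eqref{pw} and transforms $\circ$ by~\eqref{ci}; in the new basis $a=\langle T^1(e_1),l_2\rangle=1$. Then the unit-row-norm condition on $T^1$ kills $b,c,d,e$ and gives $f=\pm1$ (normalized to $+1$ by $l_3,l_4\mapsto -l_3,-l_4$), and the unit-row-norm condition on $T^2$ kills $g,h,i,j$. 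If you want to rescue your approach, you would have to replace the global transitivity claim by exactly this kind of explicit, admissible normalization, and you should expect the values of $k$ and $l$ to remain undetermined at this stage.
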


\begin{proof} 
Recall an automorphism $\sigma$ of the quaternion algebra maps a quaternion basis
to a quaternion basis, and vice versa.

Observe that if we consider the new quaternion
basis $l_i:=\sigma(e_i),1\leq i\leq 4,$ to set
$$
X=\sigma(X'),Y=\sigma(Y'),Z=\sigma(Z'),W=\sigma(W'),
$$ 
then the second fundamental form in~\eqref{pw} remains to be of the same form
since $\sigma({\overline {Y'}}X')={\overline {\sigma(Y')}}\sigma(X')=\overline{Y}X$.
Meanwhile, by comparing the homogeneous types in~\eqref{qw}, we conclude that the circle product
$\circ$ relative to the standard quaternion basis $1,i,j,k$ is converted to 
\begin{equation}\label{ci}
X'\circ' Y'=\sigma^{-1}(\sigma(X')\circ\sigma(Y'))=\sigma^{-1}(X\circ Y)
\end{equation}
relative to the new quaternion basis $\sigma(1),\sigma(i),\sigma(j),\sigma(k)$.
Therefore, to verify the lemma, it suffices to find a quaternion basis
$l_1=e_1,l_2,l_3,l_4$ for which 

\begin{equation}\label{USEFUL}
1=<l_2\circ' l_1,l_1>=<l_2\circ' e_1,e_1>=<l_2\circ e_1,e_1>=<T^1(e_1),l_2>,
\end{equation}
where the last equality is obtained by~\eqref{useful}. It is now clear that if we define
$l_2=T^1(e_1)$, then readily~\eqref{USEFUL} is verified by the orthogonality
of $T^1$. Complete $l_1,l_2$ to a quaternion basis $l_1,\cdots, l_4$ (choose $l_3\perp l_1,l_2$
and set $l_4=l_2l_3$). Now $a=1$. It follows by the orthogonality of $T^1$ that
$b=c=d=e=0$ so that $f=\pm1$. If $f=-1$, change $l_3,l_4$ to $-l_3,-l_4$
so that we may also assume $f=1$.

It follows that $g=h=i=j=0$ by the orthogonality of $T^2$, etc.
The lemma is completed by the orthogonality of $T^p,1\leq p\leq 4$.
\end{proof}

%%Now setting $X=0$ and looking at the coefficient of $(x_5-y_5)^2$ of
%%we conclude by Lemma~\ref{same} that 
%%\begin{eqnarray}\label{yw}
%%\aligned
%%&\sum_\alpha <e_\alpha\circ Y,W_1><e_\alpha\circ Y,W_2>\\
%%&=\sum_\alpha <e_\alpha* Y,W_1><e_\alpha* Y,W_2>,
%%\endaligned
%%\end{eqnarray}
%%where
%%$$
%%\begin{eqnarray}\label{YW}
%%\aligned
%%&\sum_\alpha <e_\alpha\circ Y_1,W_1><e_\alpha\circ Y_2,W_2>\\
%%&+\sum_\alpha <e_\alpha\circ Y_2,W_1><e_\alpha\circ Y_1,W_2>\\
%%&=\sum_\alpha <e_\alpha* Y_1,W_1><e_\alpha* Y_2,W_2>\\
%%&+\sum_\alpha <e_\alpha* Y_2,W_1><e_\alpha* Y_1,W_2>.
%%\endaligned
%%\end{eqnarray}

\begin{lemma}\label {same} $<\nabla q^a,\nabla q^b>=<\nabla \tilde{q}^a,\nabla \tilde{q}^b>$
for all $1\leq a,b\leq 4$.
\end{lemma}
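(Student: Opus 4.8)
The plan is to show that the Gram matrix $\bigl(\langle\nabla q^a,\nabla q^b\rangle\bigr)_{1\le a,b\le 4}$ depends only on the combinatorial data $S^p_{\alpha\mu}$, which by Corollaries~\ref{qh} and~\ref{only} is now completely pinned down and agrees with the homogeneous data encoded in~\eqref{T1}. Concretely, I would start from the explicit formula for $q^a$ obtained at the end of Section~5,
\begin{equation}\nonumber
q^a=\sqrt{2}(x_5-y_5)\sum_{\alpha,\mu=1}^4 S^{p=a}_{\alpha\mu}x_\alpha y_\mu
+\sum_{p,\alpha,\beta=1}^4 U^a_{\alpha\beta p}x_\alpha x_\beta z_p
+\sum_{p,\mu,\nu=1}^4 V^a_{\mu\nu p}y_\mu y_\nu z_p
\end{equation}
(using $F=0$), and differentiate with respect to all the coordinates $x_1,\dots,x_5,y_1,\dots,y_5,z_1,\dots,z_4,w_0,\dots,w_4$. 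The key observation is that every coefficient appearing in $q^a$ is a polynomial in the entries of the matrices $T^p$, i.e. in the $S^p_{\alpha\mu}$, together with the fixed constants $\sqrt 2$ coming from~\eqref{rank}; hence so is every partial derivative $\partial q^a/\partial(\cdot)$, and therefore so is each inner product $\langle\nabla q^a,\nabla q^b\rangle$.

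Second, I would organize the computation of $\langle\nabla q^a,\nabla q^b\rangle$ by the type of variable, exactly as the $q^a$ splits: the $x_5,y_5$ part contributes a term bilinear in $\sum S^a_{\alpha\mu}x_\alpha y_\mu$ and $\sum S^b_{\alpha\mu}x_\alpha y_\mu$; the $x$- and $y$-derivatives of the same linear factor contribute terms involving $\sum_{\mu}S^a_{\alpha\mu}S^b_{\beta\mu}$, which are controlled by the orthogonality of the $T^p$ established via~\eqref{mirror}; and the $z$-derivatives of the cubic $U$- and $V$-parts produce sums $\sum U^a_{\alpha\beta p}U^b_{\gamma\delta p}$ and $\sum V^a_{\mu\nu p}V^b_{\rho\sigma p}$, which by the definitions of $U$ and $V$ reduce again to quadratic expressions in the $S^p_{\alpha\mu}$. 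At this point the crucial input is that Corollary~\ref{only} lets us assume $a=f=1$ with all other unknowns zero, so the matrices $T^1,T^2,T^3,T^4$ are \emph{literally} the homogeneous matrices $\tilde T^1,\tilde T^2,\tilde T^3,\tilde T^4$ of~\eqref{T1} (up to the already-fixed normalization), and correspondingly $S^p_{\alpha\mu}$ coincides with the homogeneous datum $\tilde S^p_{\alpha\mu}$. Since the entire formula for $q^a$ is built functorially out of these, we get $q^a$ (as a polynomial on the relevant coordinate subspace, modulo the $F$-type and undetermined $\theta^5$-type terms which we have shown vanish) equal to $\tilde q^a$ up to the parts that do not involve the $S$-data, and in particular the gradients match on the nose.

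The main obstacle, and the reason this deserves a separate lemma rather than being folded into Proposition~\ref{prop1}, is bookkeeping the ``cross'' terms: $\nabla q^a$ also involves $\partial q^a/\partial z_p$ hitting the cubic part, and one must check that the mixed $x$--$z$ and $y$--$z$ cross contributions to $\langle\nabla q^a,\nabla q^b\rangle$ either cancel or are again expressible purely through the $T^p$. I would handle this by rewriting everything in the quaternionic vector notation of~\eqref{qw}, where $\sum_p T^p_{\alpha\mu}e_p = e_\alpha\circ e_\mu$ and~\eqref{useful} converts all the index sums into expressions of the form $\langle X\circ(XW),\cdot\rangle$ and $\langle(YW)\circ Y,\cdot\rangle$; then the identity~\eqref{YZ} and its polarizations~\eqref{symm}, which we already proved, kill precisely the would-be obstructing terms. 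Once the intrinsic identities are in place the equality $\langle\nabla q^a,\nabla q^b\rangle=\langle\nabla\tilde q^a,\nabla\tilde q^b\rangle$ follows by direct comparison, since both sides are the same universal quaternionic polynomial evaluated on the same (now normalized) circle product.
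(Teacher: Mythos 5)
Your argument is circular, and the circularity traces to a misreading of Corollary~\ref{only}. That corollary does \emph{not} pin down the matrices $T^p$ completely: it only normalizes $a=f=1$ and kills the entries $b,\dots,e$ and $g,\dots,j$, leaving the two unknowns $k$ and $l$ in $T^3$ and $T^4$ undetermined. Those are only fixed ($k=l=1$) in Corollary~\ref{decisive}, whose proof \emph{uses} Lemma~\ref{same} — the identity~\eqref{important} that constrains $k$ and $l$ is obtained precisely by equating $\langle\nabla\langle q,W_1\rangle,\nabla\langle q,W_2\rangle\rangle$ with its homogeneous counterpart. So when you assert that "the matrices $T^1,\dots,T^4$ are literally the homogeneous matrices $\tilde T^1,\dots,\tilde T^4$" and conclude that the gradients "match on the nose," you are assuming the conclusion of the very step this lemma is designed to enable. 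A direct computation of the Gram matrix from the explicit formula for $q^a$ would produce expressions still containing $k$ and $l$, and you would have no way to conclude equality with the homogeneous Gram matrix without already knowing their values.

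The paper's proof goes a different and essentially computation-free route: it invokes the Ozeki--Takeuchi identities that express $8\langle\nabla q^a,\nabla q^b\rangle$ entirely in terms of the second fundamental form — the $p_a$, the pairings $\langle\nabla p_a,\nabla p_b\rangle$, and $G=\sum_a p_a^2$ — with no reference to the third fundamental form at all. Since Proposition~\ref{prop1} shows the hypersurface and the homogeneous example have identical second fundamental forms, the right-hand sides of these identities agree term by term, hence so do the left-hand sides. This is exactly what makes the lemma useful where it sits: it supplies information about the as-yet-unknown $q^a$ extracted purely from the already-classified $p_a$. If you want to repair your approach, you must replace the appeal to "the $T^p$ equal the $\tilde T^p$" with this independent source of information; as written, the proposal cannot be completed.
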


\begin{proof} This follows from Proposition~\ref{prop1} and the identities of
Ozeki and Takeuchi~\cite[I, p 530]{OT}
\begin{eqnarray}\nonumber
\aligned
&8<\nabla q^a,\nabla q^a>=8(<\nabla p_a,\nabla p_a>(|X|^2+|Y|^2+|Z|^2+(x_5)^2+(y_5)^2)-p_a^2)\\
&+<\nabla <\nabla p_a,\nabla p_a>,\nabla G>-24G-2\sum_{b=0}^4<\nabla p_a,\nabla p_b>^2,\;\text{and}\\
&8<\nabla q^a,\nabla q^b>=8(<\nabla p_a,\nabla p_a>(|X|^2+|Y|^2+|Z|^2+(x_5)^2+(y_5)^2)-p_ap_b)\\
&+<\nabla <\nabla p_a,\nabla p_b>,\nabla G>
-2\sum_{c=0}^4<\nabla p_a,\nabla p_c><\nabla p_b,\nabla p_c>,\quad a\neq b,
\endaligned
\end{eqnarray}
where $G=p_0^2+\cdots+p_4^2.$ Observe that the isoparametric hypersurface under
consideration and the homogeneous example have the same second fundamental form.
\end{proof}

Let us now calculate $\nabla <q,W>$
with respect to the $X,Y,Z$ (i.e., $\alpha,\mu,p$) coordinates.
%%By~\eqref{pw}
%%\begin{eqnarray}\label{npw}
%%\aligned
%%&\nabla<p,W>=2\sum_{\alpha=1}^4<\overline{Y}e_\alpha,W>e_\alpha
%%+\sqrt{2}<Z,W>\zeta_5\\
%%&+2\sum_{\mu=1}^4<\overline{e_\mu}X,W>e_\mu+\sqrt{2}<Z,W>\eta_5\\
%%&+\sqrt{2}\sum_{p=1}^4(x_5+y_5)<W,e_p>e_p,
%%\endaligned
%%\end{eqnarray}
By~\eqref{qw}
\begin{eqnarray}\label{nqw}
\aligned
&\nabla<q,W>\\
&=\sum_{\alpha=1}^4 (<e_\alpha\circ(X\overline{W}),Z>
+<X\circ(e_\alpha\overline{W}),Z>)e_\alpha\\
&+\sqrt{2}(x_5-y_5)\sum_{\alpha=1}^4<e_\alpha\circ Y,W>e_\alpha
+\sqrt{2}<X\circ Y,W>\zeta_5\\
&-\sum_{\mu=1}^4(<(e_\mu W)\circ Y,Z>-<(YW)\circ e_\mu,Z>)e_\mu\\
&+\sqrt{2}(x_5-y_5)\sum_{\mu=1}^4<X\circ e_\mu,W>e_\mu-\sqrt{2}<X\circ Y,W>\eta_5\\
&+\sum_{p=1}^4(<X\circ(X{\overline W}),e_p>-<(YW)\circ Y,e_p>)e_p,
\endaligned
\end{eqnarray}
where $\zeta_5$ and $\eta_5$ are basis vectors of $x_5$ and $y_5$, respectively.

Set 
$$
<X* Y,e_p>:=\tilde{T}^p(X,Y)
$$
with $\tilde{T}^p(e_\alpha,e_\mu)$ given in~\eqref{T1}. 

\begin{corollary}\label{decisive} $k=l=1$ in Corollary~{\rm \ref{qh}}.
\end{corollary}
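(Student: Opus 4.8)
The plan is to pin down the two remaining unknowns $k$ and $l$ in Corollary~\ref{qh} (recalling $a=f=1$ and all other entries vanish by Corollary~\ref{only}) by forcing agreement between the inner products $\langle\nabla q^a,\nabla q^b\rangle$ of the hypersurface under consideration and those $\langle\nabla\tilde q^a,\nabla\tilde q^b\rangle$ of the homogeneous example. Lemma~\ref{same} already tells us these two families of inner products coincide for all $1\le a,b\le 4$. So the strategy is: first compute $\nabla q^a$ explicitly from the vector form~\eqref{qw} via~\eqref{nqw}, with the circle product $\circ$ governed by the matrices $T^1,\dots,T^4$ of Corollary~\ref{qh} now reduced to depend only on $a=f=1,k,l$; second, expand $\langle\nabla q^a,\nabla q^b\rangle$ as a polynomial in the $X,Y,Z,x_5,y_5$ coordinates; third, do the same for the homogeneous $\tilde q^a$ using the matrices $\tilde T^p$ in~\eqref{T1}; and fourth, match coefficients of a well-chosen monomial to read off a polynomial equation in $k$ and $l$ whose only admissible solution is $k=l=1$.

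Concretely, I would isolate the part of $\nabla q^a$ involving only $Z$ and the $E_0$-directions $e_p$, namely the last line of~\eqref{nqw}: $\sum_p\big(\langle X\circ(X\overline W),e_p\rangle-\langle(YW)\circ Y,e_p\rangle\big)e_p$. Squaring $\nabla q^a$ and extracting the purely quartic-in-$X$ (or quartic-in-$Y$) contribution kills all the cross terms involving $Z$, $x_5$, $y_5$, leaving an expression built solely from the $T^p$'s and the quaternionic multiplication. Since the $T^p$ of Corollary~\ref{qh} differ from the homogeneous $\tilde T^p$ of~\eqref{T1} precisely in the off-diagonal $2\times2$ blocks, which is where $k$ and $l$ live, the quartic coefficient will be an honest quadratic (or bilinear) polynomial in $k,l$ set equal to its homogeneous value. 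The orthogonality of the $T^p$ (noted after~\eqref{T1}) already constrains $k^2=l^2=1$ up to the contributions of the surviving entries; combined with the $\langle\nabla q^a,\nabla q^b\rangle$ matching for $a\ne b$, which introduces the product $kl$, one gets enough equations to conclude $k=l=1$ rather than $k=l=-1$ or mixed signs. The definition of $X*Y$ with $\tilde T^p$ just introduced is exactly the bookkeeping device for writing the homogeneous side, so the comparison becomes $\langle X\circ\bullet,\bullet\rangle$ versus $\langle X*\bullet,\bullet\rangle$ term by term.

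The main obstacle I anticipate is purely computational bookkeeping: the quaternionic products $X\overline W$, $YW$, $e_\alpha\overline W$ interleaved with the circle product produce many monomials, and one must organize the expansion so that the sign ambiguities in $k,l$ (not their squares, which orthogonality fixes) are actually resolved. The cleanest route is probably to specialize: set $W=e_1$, $x_5=y_5=0$, and further restrict $X,Y$ to lie in a $2$-dimensional quaternionic subspace adapted to the block structure of the $T^p$, so that only the entries $a=f=1$ and $k,l$ enter. Then $\langle\nabla q^a,\nabla q^a\rangle$ becomes a manageable polynomial in four or fewer real variables, and matching a single well-chosen coefficient against the homogeneous model forces the sign. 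One should double-check that the chosen specialization genuinely separates $k$ from $l$ (i.e. doesn't only see $kl$ or $k+l$); if not, a second specialization with $W=e_2$ or a different subspace supplies the complementary equation. Once both signs are fixed, $k=l=1$ follows, and with Corollary~\ref{only} this shows the matrices $T^p$ coincide with $\tilde T^p$, i.e. the circle product—and hence by~\eqref{qw} the entire third fundamental form—matches the homogeneous example.
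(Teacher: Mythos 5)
Your overall strategy --- invoke Lemma~\ref{same} to equate $\langle\nabla q^a,\nabla q^b\rangle$ with the homogeneous values, expand via~\eqref{nqw}, use orthogonality of the $T^p$ to get $k^2=l^2=1$, and then match coefficients --- is the same framework the paper uses (it is exactly identity~\eqref{important}). But your specific computational plan has a gap that would prevent it from closing. You propose to isolate the last line of~\eqref{nqw} and extract the \emph{purely quartic-in-$X$} (or quartic-in-$Y$) contributions, e.g.\ $\langle X\circ(X\overline{W_1}),X\circ(X\overline{W_2})\rangle=\sum_p\langle T^p(X\overline{W_1}),X\rangle\langle T^p(X\overline{W_2}),X\rangle$. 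After Corollary~\ref{only} the only unknowns $k,l$ sit in $T^3$ and $T^4$, and the substitution $(k,l)\mapsto(-k,-l)$ sends $T^3\mapsto -T^3$, $T^4\mapsto -T^4$ while fixing $T^1,T^2$. Every purely-quartic-in-$X$ term above pairs $T^p$ against the same $T^p$, hence is invariant under this sign flip; the same is true for any choice of $W_1,W_2$, so your fallback of trying a second $W$ does not help. Such terms can only ever determine $k^2$, $l^2$ and $kl$ (equivalently $(k+l)^2$), and together with orthogonality this still leaves $(k,l)=(1,1)$ and $(-1,-1)$ indistinguishable.

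What actually resolves the sign is the bidegree-$(2,2)$-in-$(X,Y)$ part of~\eqref{important}, specifically the first term $4\langle X\circ Y,W_1\rangle\langle X\circ Y,W_2\rangle$ with $W_1=e_1$, $W_2=e_3$: its $x_1^2y_2y_4$ coefficient is $4\,T^1_{12}T^3_{14}=4ak$, which is \emph{linear} in $k$ with the known factor $a=1$ coming from $T^1$, and comparing with $4\,\tilde T^1_{12}\tilde T^3_{14}$ forces $k=1$. (The paper first uses the mirror identity~\eqref{mirror}, with $p=1$, $q=3$, $\alpha=1$, $\beta=4$, to get $kf-al=0$, i.e.\ $k=l$, so that the second and third terms of~\eqref{important} contribute equally on both sides and the first terms can be equated in isolation; you make no use of~\eqref{mirror} and would need either that step or a simultaneous bookkeeping of all three terms' contributions to the same monomial.) So the missing idea is precisely the one the corollary's name suggests is decisive: a coefficient in which a known entry of $T^1$ or $T^2$ multiplies an unknown entry of $T^3$ or $T^4$ to first order. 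Your proposal, as written, never produces such a term.
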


\begin{proof}
%%Setting $W_1=e_1,W_2=e_3,Y_1=e_2$ and $Y_2=e_4$,
Setting $p=1,q=3,\alpha=1$ and $\beta=4$ in~\eqref{mirror} with Corollary~\ref{sp}
in mind, we obtain by the structure of $T^p$ in Corollaries~\ref{qh}
and~\ref{only} (recall $T^p_{\alpha\mu}:=S^p_{\alpha\mu}$) that
$$
kf-al=0
$$
so that $k=l$ since $a=f$. 

Setting $Z=x_5=y_5=0$ in
\begin{equation}\nonumber
<\nabla <q,W_1>,\nabla <q,W_2>>
\end{equation}
via~\eqref{nqw} and comparing homogeneous types, we obtain
\begin{eqnarray}\label{important}
\aligned
&4<X\circ Y,W_1><X\circ Y,W_2>\\
&-<X\circ (X\overline{W_1}),(YW_2)\circ Y>\\
&-<X\circ (X\overline{W_2}),(YW_1)\circ Y>\\
&=4<X* Y,W_1><X*Y,W_2>\\
&-<X* (X\overline{W_1}),(YW_2)* Y>\\
&-<X* (X\overline{W_2}),(YW_1)* Y>.
\endaligned
\end{eqnarray}
Setting $W_1=e_1$ and $W_2=e_3$, we expand the preceding identity to
derive that the $x_1^2y_2y_4$ coefficient of the
second term (on both sides) is
$$
-(T^2_{11}T^2_{44}-T^2_{11}T^2_{22})=af+a^2=2,
$$
while that of the third term
(on both sides) is
$$
T^4_{13}T^4_{24}+T^4_{13}T^4_{42}=k^2+kl=2k^2=2,
$$
so that the $x_1^2y_2y_4$ coefficient of the first term satisfy
$$
k=ak=T^1_{12}T^3_{14}=\tilde{T}^1_{12}\tilde{T}^3_{14}=1,
$$
noting that the term $T^1_{14}T^3_{12}$ in the coefficient is zero.
\end{proof}

As a consequence, we deduce that $X\circ Y=X*Y$. That is,
the third
fundamental form of the isoparametric hypersurface under consideration is
that of the homogeneous example. We conclude that the isoparametric hypersurface
is precisely the homogeneous one.

\subsection{The $(6,9)$ case} The necessary modifications are as follows.
Let $e_1,e_2,\cdots,e_8$ be the octonion basis with $e_1$ the multiplicative identity.
Then in~\eqref{pw} the positive sign in front of $2<\overline{Y}X,W>$ is changed to
the negative sign (octonion multiplication is understood now). However, changing $Z,W$ to $-Z,-W$ will convert the sign. So,
we will assume~\eqref{pw} from now on. Meanwhile,

\begin{eqnarray}\nonumber
\aligned
X&:=x_1e_1+x_2e_2+\cdots+x_8e_8,\qquad Y:=y_1e_1+y_2e_2+\cdots+y_8e_8,\\
Z&:=z_3e_3+z_4e_4+\cdots+z_8e_8,\qquad W:=w_3e_3+w_4e_4+\cdots+w_8e_8
\endaligned
\end{eqnarray}
In~\eqref{T1} for the homogeneous case, the matrices are replaced, in view of~\eqref{duality}, by
\begin{equation}\label{A3}
{\tilde T}^\mu=\begin{pmatrix}\sqrt{2}<P_a(k_\mu),g_p>\end{pmatrix},
\end{equation}
where $2\leq\mu\leq 8,$ ${\tilde T}^{\mu}$ is skew-symmetric with the
$(1,j)$-entry
$=<e_\mu,e_2e_j>$ for $2\leq j\leq 8$, the $(i,j)$-entry
$=<e_\mu,(e_2e_j)e_i>$ for $2\leq i<j\leq 8$. Explicitly,

\begin{eqnarray}\nonumber%%\label{TT}
\aligned
{\tilde T}^3&=\begin{pmatrix}0&J&0&0\\J&0&0&0\\0&0&0&J\\0&0&J&0\end{pmatrix},\quad
{\tilde T}^4=\begin{pmatrix}0&I&0&0\\-I&0&0&0\\0&0&0&I\\0&0&-I&0\end{pmatrix},\\
{\tilde T}^5&=\begin{pmatrix}0&0&J&0\\0&0&0&-J\\J&0&0&0\\0&-J&0&0\end{pmatrix},\quad
{\tilde T}^6=\begin{pmatrix}0&0&I&0\\0&0&0&-I\\-I&0&0&0\\0&I&0&0\end{pmatrix},\\
{\tilde T}^7&=\begin{pmatrix}0&0&0&L\\0&0&L&0\\0&-L&0&0\\-L&0&0&0\end{pmatrix},\quad
{\tilde T}^8=\begin{pmatrix}0&0&0&-K\\0&0&-K&0\\0&K&0&0\\K&0&0&0\end{pmatrix},
\endaligned
\end{eqnarray}
where $J,K$ and $L$ are given in~\eqref{j} and~\eqref{k}.

\begin{lemma}\label{ssymm} $T^p,3\leq p\leq 8,$ in~\eqref{T} are all
skew-symmetric. The upper left $2$-by-$2$
block of each of them is zero.  
\end{lemma}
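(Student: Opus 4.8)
The plan is to mimic, in the octonionic $(6,9)$ setting, the argument that produced Corollary~\ref{qh} and the first half of Corollary~\ref{only} in the quaternionic $(4,5)$ case. First I would establish the octonionic analogue of the identity $\langle(YZ)\circ Y,Z\rangle=0$: setting $X=x_3=\cdots$ (the ``$e_1$-type'' slot) equal to zero in the octonionic versions of \eqref{pw} and \eqref{qw}, the equation $\sum_{a}p_aq^a=0$ collapses, exactly as in the proof of Lemma~\ref{YZ}, to $\langle(YZ)\circ Y,Z\rangle=0$ for all $Y,Z\in{\mathbb O}$ (with $Z,W$ now ranging over the imaginary octonions orthogonal to $e_1,e_2$, matching the index ranges $3\le\alpha,\mu\le 8$). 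Polarizing in $Y$ and in $Z$ gives the two skew-symmetry relations of the form \eqref{symm}.

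Next I would read off the structure of the $T^p$. Setting $Z=e_2$ (the multiplicative unit having been normalized to $e_1$, so $e_2$ plays the role $i$ did before) in the first polarized identity forces $T^2$-type relations; more systematically, for each fixed $Z=e_r$ with $3\le r\le 8$ the relation $\langle(Y_1e_r)\circ Y_2,e_r\rangle=-\langle(Y_2e_r)\circ Y_1,e_r\rangle$ says the bilinear form $(Y_1,Y_2)\mapsto\langle(Y_1e_r)\circ Y_2,e_r\rangle$ is skew, and since $\langle(Y e_r)\circ Y',e_r\rangle=\langle T^{?}(\cdots)\rangle$ unwinds via \eqref{useful} and the octonion multiplication table into entries of the various $T^p$, one extracts: taking $r$ so that $e_2e_r=\pm e_p$ shows $T^p$ itself is skew-symmetric, and tracking the ``diagonal $2\times 2$ block'' (the rows/columns indexed by $e_1$ and $e_2$, i.e. the first two basis vectors) shows that block must vanish — this is the octonionic shadow of the computation ``$T^2_{21}=-T^2_{21}=0$ and $T^2_{22}=T^2_{11}$, then $T^2_{11}=-T^1_{21}$'' but with the crucial difference that in the $(6,9)$ case the $e_1e_2$-block does not survive as a free parameter $a$, because the relevant product $e_2\cdot e_2=-e_1$ is not one of the ``$Z$-admissible'' directions $e_3,\dots,e_8$. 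I would make this precise by checking the two claims against the explicit model matrices $\tilde T^3,\dots,\tilde T^8$ displayed above, whose upper-left $2\times 2$ blocks are indeed all zero, and which are visibly skew-symmetric.

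The main obstacle, and where care is needed, is the bookkeeping of the octonion multiplication: unlike the quaternions, ${\mathbb O}$ is non-associative, so the step in the $(4,5)$ proof that freely re-associated products like $(Y_1Z)\circ Y_2$ must be handled with the Moufang identities or, more safely, by expanding everything in the fixed basis $e_1,\dots,e_8$ and using the structure constants, together with the known relations \eqref{A2}, \eqref{cliff} tying $\circ$ to the Clifford action $P_a$. Concretely I would express $\langle(Y Z)\circ Y,Z\rangle$ for $Y,Z$ basis vectors in terms of $T^p_{\alpha\mu}=S^p_{\alpha\mu}$, impose skew-symmetry for each of the admissible $Z=e_3,\dots,e_8$, and solve the resulting linear system; the output is that each $T^p$ is skew and that the rows/columns indexed by $1$ and $2$ are zero. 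I expect the verification that \emph{every} forbidden block entry is killed (not just a representative one) to be the only genuinely tedious point, and I would discharge it exactly as Corollary~\ref{qh} does — ``continuing in this fashion'' through the six matrices — rather than writing out all cases. Having done this, Lemma~\ref{ssymm} follows, and it sets up the octonionic analogue of Corollary~\ref{only} and of the normalization that ultimately pins the third fundamental form down to that of the homogeneous example.
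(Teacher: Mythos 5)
There is a genuine gap. Your whole derivation rests on the octonionic analogue of $\langle(YZ)\circ Y,Z\rangle=0$ and its polarizations~\eqref{symm}. But in the $(6,9)$ case $Z$ ranges only over ${\rm span}(e_3,\dots,e_8)$, so the one move that produced skew-symmetry in the $(4,5)$ case --- setting $Z=e_1$ in the first identity of~\eqref{symm} to get $T^1_{\alpha\mu}=-T^1_{\mu\alpha}$ --- is unavailable for \emph{every} $T^p$, $3\le p\le 8$. Moreover, \eqref{symm} by itself demonstrably does not force skew-symmetry: in the $(4,5)$ case those very identities leave $T^2,T^3,T^4$ with nonzero diagonal entries and unrelated off-diagonal pairs (Corollary~\ref{qh}), so ``mimicking the argument that produced Corollary~\ref{qh}'' cannot yield the lemma. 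Your substitute --- taking $Z=e_r$, $r\ge 3$, so that $(Y_1,Y_2)\mapsto\langle(Y_1e_r)\circ Y_2,e_r\rangle$ is skew --- only says that $R_{e_r}^{tr}T^r$ is skew-symmetric (here $R_{e_r}$ is right multiplication by $e_r$), i.e.\ $T^{r\,tr}=-R_{e_r}T^rR_{e_r}$, a conjugation relation between entries in different slots, not skew-symmetry of $T^r$; and the observation that ``$e_2e_2=-e_1$ is not a $Z$-admissible direction'' is a heuristic for why the upper-left block \emph{might} vanish, not a proof that it does.

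The paper's proof uses an identity you never extract: after setting $x_9=y_9=0$ in~\eqref{PQ}, it isolates the component of degree $(1,3,1)$ in $(X,Y,Z)$, namely $|Y|^2\langle X\circ Y,Z\rangle=\sum_a\langle\overline{Y}X,e_a\rangle\langle(Ye_a)\circ Y,Z\rangle$, and then specializes $X=e_1$ (not $X=0$). Since the normal directions only span $e_3,\dots,e_8$, the right-hand sum collapses to $\langle(Y(\overline{Y}-y_1e_1+y_2e_2))\circ Y,Z\rangle$ and one obtains $y_1\langle Y\circ Y,Z\rangle=y_2\langle(Ye_2)\circ Y,Z\rangle$. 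The coefficient of $y_1y_iy_j$ with $i,j\ge 3$ gives $\langle e_i\circ e_j+e_j\circ e_i,Z\rangle=0$, i.e.\ skew-symmetry, and the coefficients of $(y_1)^3$, $(y_2)^3$, $(y_1)^2y_2$, $y_1(y_2)^2$ then kill the upper-left $2$-by-$2$ block. This degree-$(1,3,1)$ identity lives in a different homogeneous component of $\sum_ap_aq^a=0$ from the one (degree $0$ in $X$, linear in $y_9$) that yields $\langle(YZ)\circ Y,Z\rangle=0$, so it cannot be recovered from your starting point; without it your argument does not close.
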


\begin{proof} Setting $x_9=y_9=0$ in~\eqref{pw} and~\eqref{qw} (note that $x_5$ and
$y_5$ in the formulae are replaced by $x_9$ and $y_9$ in the present case),
we compare homogeneous types in~\eqref{PQ} and set $X=e_1$ to obtain
\begin{eqnarray}\nonumber
\aligned
0&=|Y|^2<e_1\circ Y,Z>-\sum_{\alpha=3}^8<\overline{Y},e_\alpha><(Ye_\alpha)\circ Y,Z>\\
&=|Y|^2<e_1\circ Y,Z>-<(Y(\overline{Y}-y_1e_1+y_2e_2))\circ Y,Z>\\
&=y_1<Y\circ Y,Z>-y_2<(Ye_2)\circ Y,Z>,
\endaligned
\end{eqnarray}
of which the coefficients of of $y_1y_iy_j$, for $3\leq i,j\leq 8,$ is
$$
0=<e_i\circ e_j+e_j\circ e_i,Z>,
$$
so that $T^p_{ij}=-T^p_{ji}$. This is also true for $(i,j)=(1,j),j\geq 3,$ or $(i,j)=(2,j),j\geq 3$.
For $(i,j)=(1,2)$, the coefficients of $(y_1)^3$ and $(y_2)^3$ result in the
$T^p_{11}=T^p_{22}=0$, while the coefficient of $(y_1)^2y_2$ gives
$$
2(T^p_{12}+T^p_{21})-T^p_{21}=0
$$
and the coefficient of $y_1(y_2)^2$ gives
$$
-T^p_{22}+T^p_{21}+T^p_{11}=0.
$$
From this we see $T^p_{12}=T^p_{21}=0$. 
\end{proof}

%%\begin{lemma}\label{QQ} $|q|^2=|\tilde{q}|^2$.
%%\end{lemma}

%%\begin{proof} This follows from the identity~\cite[I, p 530]{OT}
%%$$
%%16|q|^2=16G(|X|^2+(x_9)^2+|Y|^2+(y_9)^2+|Z|^2)-<\nabla G,\nabla G>
%%$$
%%with $G=|p|^2$ and the fact that the second fundamental form is identical with that
%%of the homogneous example.
%%\end{proof}

\begin{lemma}\label{fine} Suppose $<e_2\circ Z,Z>=0$ for all $Z\perp e_1,e_2$.
Then there is an octonion orthonormal pair of purely imaginary
vectors $(X,Y)$ in ${\mathbb O}$ such that $X,Y\perp e_2$ and
$<Y\circ X,X>\neq 0$.
\end{lemma}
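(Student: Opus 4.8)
The plan is to show that the displayed hypothesis cannot occur at all, so that the asserted pair exists for trivial reasons; equivalently, assuming in addition that no such pair exists only strengthens the hypotheses, so it suffices to derive a contradiction from the hypothesis $<e_2\circ Z,Z>=0$ alone.

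The first step is the cubic identity that drives the whole analysis. Specializing $\sum_a p_aq^a=0$ (the $(6,9)$ analogue of~\eqref{PQ}) to $X=x_9=0$, exactly as in the $(4,5)$ case one reads off $<(YZ)\circ Y,Z>=0$ for every octonion $Y$ and every $Z\in E_0=\text{span}(e_3,\dots,e_8)$. Taking $Y=Z$ and using that a purely imaginary octonion satisfies $Z^2=-|Z|^2e_1$, this becomes $|Z|^2<e_1\circ Z,Z>=0$, hence $<e_1\circ Z,Z>=0$ for all $Z\in E_0$. Together with the hypothesis $<e_2\circ Z,Z>=0$, polarization in $Z$ and the skew-symmetry of the $T^p$ (Lemma~\ref{ssymm}) force the two $6$-by-$6$ matrices
\begin{equation}\nonumber
R:=\bigl(S^p_{1\mu}\bigr),\qquad \Sigma:=\bigl(S^p_{2\mu}\bigr),\qquad 3\le p,\mu\le 8,
\end{equation}
to be skew-symmetric.

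Next I would bring in the mirror identity~\eqref{mirror}. By Corollary~\ref{sp} and Proposition~\ref{prop1} the only nonzero entries of the $B$-blocks lie in the ninth row, so $S^a_{p\alpha}=0$ whenever $\alpha\le 8$; hence the first two sums in~\eqref{mirror} disappear and one is left with $T^pT^q+T^qT^p=-2\delta_{pq}I_8$ for $3\le p,q\le 8$, i.e.\ the $T^p$ form the irreducible Clifford system of $C_6$ on $\mathbb R^8$. Reading off the $(1,1)$-, $(2,2)$- and $(1,2)$-entries of this relation, and using that the upper-left $2$-by-$2$ block of every $T^p$ vanishes (Lemma~\ref{ssymm}) so that the sums collapse onto the indices $\{3,\dots,8\}$, one obtains exactly
\begin{equation}\nonumber
R^2=\Sigma^2=-I_6,\qquad R\Sigma+\Sigma R=0 .
\end{equation}
Thus $I,R,\Sigma,R\Sigma$ span a copy of the quaternions acting on $\mathbb R^6$, which is absurd, since a real module over the quaternions has dimension divisible by $4$. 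Therefore the hypothesis $<e_2\circ Z,Z>=0$ for all $Z\perp e_1,e_2$ is untenable, and the lemma follows.

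I expect the main obstacle to be bookkeeping rather than ideas: one must check that the cubic identity reduces to $<(YZ)\circ Y,Z>=0$ on the nose (the octonionic sign convention of~\eqref{pw} in the $(6,9)$ case, and the fact that the $E_0$-slot of $\circ$ detects only $\text{span}(e_3,\dots,e_8)$, both enter here), and that after polarization the superscript ranging over the $E_0$-indices and the subscript $\mu$ ranging over the $E_{-1}$-indices may legitimately be identified, so that $R$ and $\Sigma$ are genuinely square skew matrices. The remaining computations are the routine extraction of block entries from the Clifford relation together with the elementary fact about quaternionic modules.
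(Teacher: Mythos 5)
Your proof is correct, but it takes a genuinely different route from the paper's. The paper argues by contradiction from \emph{both} hypotheses: assuming no pair $(X,Y)$ with $<Y\circ X,X>\neq 0$ exists, it forms $T^X:=\sum_p x_pT^p$, shows $T^X(X)$ is orthogonal to $e_2$, to $X$, and to every purely imaginary $Y\perp X,e_2$, hence $T^X(X)=\pm e_1$, and then homogenizes to get $T^p_{1p}=\pm 1$, contradicting $T^p_{1p}=0$ from~\eqref{symm}. You instead show that the hypothesis $<e_2\circ Z,Z>=0$ is by itself untenable, so the lemma holds vacuously. I checked your steps: the cubic identity $<(YZ)\circ Y,Z>=0$ does survive to the $(6,9)$ case (set $X=x_9=0$, keep $y_9$), and $Y=Z$ gives $<e_1\circ Z,Z>=0$, so $R$ is skew (this part needs no hypothesis and is consistent with the homogeneous model, where $R=\mathrm{diag}(J,J,-J)$); the hypothesis makes $\Sigma$ skew; \eqref{mirror} with~\eqref{RANK}, Corollary~\ref{sp} and Lemma~\ref{ssymm} gives $T^p(T^q)^{tr}+T^q(T^p)^{tr}=2\delta_{pq}I_8$, whose $(1,1)$-, $(2,2)$- and $(1,2)$-entries yield $RR^{tr}=\Sigma\Sigma^{tr}=I_6$ and $R\Sigma^{tr}+\Sigma R^{tr}=0$; combined with skewness these become $R^2=\Sigma^2=-I_6$ and $R\Sigma=-\Sigma R$, an ${\mathbb H}$-module structure on ${\mathbb R}^6$, which is impossible since $4\nmid 6$. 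This is a valid and in fact stronger conclusion: it is consistent with the homogeneous example, where $\Sigma=I_6$ is symmetric rather than skew, and it renders the ``otherwise'' branch in the proof of Lemma~\ref{zero} vacuous, so that one could always take the first branch there (an eigenvector of $z\mapsto e_2\circ z$ with eigenvalue $\pm 1$ always exists). What the paper's argument buys is a more elementary, coordinate-level contradiction that stays within the single operator $T^X$; what yours buys is a structural explanation (a forbidden quaternionic module) and a simplification of the subsequent lemma. The only imprecision worth noting is cosmetic: what \eqref{mirror} directly gives is $T^p(T^q)^{tr}+T^q(T^p)^{tr}=2\delta_{pq}I_8$, and one needs Lemma~\ref{ssymm} to convert this into the anticommutation form you quote; since you invoke that lemma anyway, this is not a gap.
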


\begin{proof}
Suppose the contrary. For any such pair $(X,Y)$, consider
$$
T^X:=\sum_{p=3}^8 x_p T^p:{\mathbb O}\rightarrow{\mathbb O}.
$$
%%Observe that
%%$$
%%T^X:(\text{span}<e_1,Y>)^\perp\rightarrow(\text{span}<e_1,Y>)^\perp 
%%$$
%%is orthogonal. This is because we may construct a new octonion
%%basis for which $l_1=e_1, l_2=Y, l_3=X$ and $l_4=l_2l_3$, while
%%$l_5\perp l_1,\cdots,l_4,$ and $l_6=l_2l_5,l_7=l_3l_5,l_8=l_4l_5$. 
%%Then $T^X$ is essentially the above $T^3$.
Now $<Y\circ X,X>=0$ is equivalent to $<T^X(X),Y>=0$ for all purely
imaginary $Y\perp X,e_2$, and hence in fact for all purely imaginary
$Y\perp e_2$ because
$$
<T^X(X),X>=\sum_{p=3,\alpha=3,\mu=3}^8 T^p_{\alpha\mu} x_\alpha x_\mu x_p=0
$$
by the skew-symmetry of $T^p$. Moreover, the assumption $<e_2\circ X,X>=0$
is equivalent to $<T^X(X),e_2>=0$. 
We thus conclude that $T^X(X)=\pm e_1$.
Homogenizing $<T^X(X),e_1>=\pm 1$ we obtain
$$
\sum_{p=3,\mu=3}^8 T^p_{1\,\mu}x_\mu x_p=\pm |X|^2
$$
for all purely imaginary octonion vectors $X$. Hence we conclude that
$T^p_{1p}=\pm 1$ for $3\leq p\leq 8$.
However, the first identity of~\eqref{symm} with $Z=e_p,Y_1=Y_2=e_1$ gives
$T^p_{1p}=0$, which is a contradiction.
\end{proof}

\begin{lemma}\label{zero} We may assume $T^3=\tilde{T}^3$ and
$T^4=\tilde{T}^4$.
\end{lemma}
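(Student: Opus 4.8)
The plan is to mimic the reduction carried out in the $(4,5)$ case (Corollary~\ref{only}), but now using octonionic automorphisms in place of quaternionic ones, together with the structural constraints already in hand: the skew-symmetry of $T^p$, $3\leq p\leq 8$, and the vanishing of their upper-left $2$-by-$2$ blocks from Lemma~\ref{ssymm}, and the nonvanishing statement of Lemma~\ref{fine}. The goal is to exhibit an orthonormal octonion frame in which $T^3$ and $T^4$ take the specific homogeneous shapes $\tilde T^3,\tilde T^4$ displayed above.

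First I would record, exactly as in~\eqref{ci}--\eqref{USEFUL}, that an algebra automorphism $\sigma$ of $\mathbb O$ (equivalently, a choice of a new octonion basis $l_i=\sigma(e_i)$ fixing $l_1=e_1$) leaves the form of the second fundamental form~\eqref{pw} unchanged and transports the circle product by $X'\circ'Y'=\sigma^{-1}(\sigma(X')\circ\sigma(Y'))$; hence it suffices to produce, after such a change, the asserted normal forms. Next I would invoke Lemma~\ref{fine}: either $<e_2\circ Z,Z>\neq 0$ for some $Z\perp e_1,e_2$ --- in which case one first rotates within the $7$-space of purely imaginary octonions to arrange a convenient starting configuration --- or Lemma~\ref{fine} hands us an orthonormal purely imaginary pair $X,Y\perp e_2$ with $<Y\circ X,X>\neq 0$. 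Using that $\mathrm{Aut}(\mathbb O)=G_2$ acts transitively on such orthonormal pairs of imaginary units with a prescribed triple, I would choose $\sigma$ so that $X\mapsto e_3$, $Y\mapsto e_4$ (and, say, $XY\mapsto$ the appropriate $e_\bullet$), thereby normalizing the leading entries $T^3_{34},T^4_{34}$ and, via the orthogonality of the $T^p$ inherited from~\eqref{mirror} (valid here because $S^a_{p\alpha}=0$ on the relevant range by~\eqref{RANK}), killing the remaining first-block-row entries, exactly as $b=c=d=e=0$ was forced in Corollary~\ref{only}. A sign normalization (replacing a pair of basis vectors by their negatives) then fixes the ambiguity $\pm 1$, giving $T^3=\tilde T^3$, $T^4=\tilde T^4$.

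The main obstacle I anticipate is bookkeeping: unlike the quaternion case, where orthogonality of $T^1,\dots,T^4$ pins everything down from two entries, here the six $9$-by-$9$ matrices $T^p$ couple through the octonion multiplication table, and the reduction must simultaneously respect (i) skew-symmetry and the zero $2$-by-$2$ block (Lemma~\ref{ssymm}), (ii) the orthogonality relations from~\eqref{mirror}/\eqref{RANK}, and (iii) Corollary~\ref{sp} (the vanishing of $S^p_{\alpha\mu}$ when $\alpha$ or $\mu$ equals $9$). So the real work is to verify that the $G_2$-orbit of admissible frames is large enough that a single well-chosen $\sigma$ lands $T^3,T^4$ precisely on $\tilde T^3,\tilde T^4$ without disturbing the constraints already established; concretely, one checks that the stabilizer conditions imposed by normalizing $(X,Y,XY)\mapsto(e_3,e_4,\cdot)$ are exactly compatible with the skew/orthogonality structure, so no over-determination occurs. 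Once $T^3,T^4$ are in standard form, the subsequent lemmas (paralleling Corollaries~\ref{qh}, \ref{decisive} of the $(4,5)$ case) will propagate the normalization to the remaining $T^5,\dots,T^8$.
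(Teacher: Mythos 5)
Your proposal captures the first half of the paper's argument but misses the step that actually does the heavy lifting, and it glosses over a normalization issue at the start.

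First, the normalization. Moving the pair $(X,Y)$ of Lemma~\ref{fine} to $(e_3,e_4)$ by a $G_2$-automorphism only guarantees that the corresponding entry of the transported circle product is \emph{nonzero}; the normal form $\tilde T^3$ requires that entry to be exactly $\pm 1$. The paper gets this by a different mechanism: the map $z\mapsto e_2\circ z$ (or $z\mapsto Y\circ z$) restricted to the relevant $6$-plane is orthogonal, and the hypothesis $<Y\circ X,X>\neq 0$ says it is not skew-symmetric, so it has a real eigenvector with eigenvalue $\pm 1$; that eigenvector is then taken as $l_3$ in the new octonion basis, which is what forces $T^3_{41}=1$ on the nose. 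Your $G_2$-transitivity argument does not produce this eigenvector.

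Second, and more seriously, you assert that once the leading entries are normalized, skew-symmetry plus the orthogonality of the $T^p$ pin down $T^3$ "exactly as $b=c=d=e=0$ was forced in Corollary~\ref{only}." That is true for the upper-left, upper-right and lower-left $4$-by-$4$ blocks, but it fails for the lower-right block: after exhausting skew-symmetry, the polarized identities~\eqref{symm}, and orthogonality, the paper is still left with a one-parameter family, a lower-right block depending on $(a,b)$ with $a^2+b^2=1$. The problem at this point is under-determination, not the over-determination you worry about. To kill $a$ the paper must bring in the Ozeki--Takeuchi gradient identities via Lemma~\ref{same}, i.e.\ compare the coefficient of $(x_6)^2(y_5)^2$ on both sides of~\eqref{important} with $W_1=W_2=l_3$ to get $4a^2-2b^2=-2$, whence $a=0$ and $b=\pm1$. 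Your proposal never invokes~\eqref{important} (or any substitute constraint from the third fundamental form) in the proof of this lemma, so it cannot conclude $T^3=\tilde T^3$. Once that gap is filled, the passage to $T^4=\tilde T^4$ via the second identity of~\eqref{symm} is routine, as you indicate.
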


\begin{proof} 
We first show that, in view of~\eqref{ci}, we can choose an octonion basis $l_1=e_1,l_2,\cdots,l_8$ relative to which $T^3_{41}=1$, i.e., 
\begin{equation}\label{eigen}
1=<l_4\circ l_1,l_3>=<l_2\circ l_3,l_3>,
\end{equation}
in which the second equality is obtained by the first
identity of~\eqref{symm} with
$Y_1=l_2,Z=l_3$ and $Y_2=l_1$ and the skew-symmetry of $T^p$. To this end,
note that if there is a $Z\perp e_1,e_2$ such that $<e_2\circ Z,Z>\neq 0$
we are done. For, then 
the orthogonal operator
$$
U:z\perp\,(\text{span}<e_1,e_2>)^\perp\rightarrow e_2\circ z\in
(\text{span}<e_1,e_2>)^\perp
$$
is not skew-symmetric and so the structure of an
orthogonal matrix tells us that $U$ has an eigenvector $v\perp e_1,e_2$ with
eigenvalue $\pm 1$. We may assume it is $1$ by changing $e_2$ to $-e_2$ and
construct a new octonion basis in which $l_2=-e_2,v=l_3,$ etc., so that~\eqref{eigen} holds.
Otherwise, Lemma~\ref{fine} gives rise to a pair $(X,Y)$ with $X,Y\perp e_1,e_2$.
In a similar vein to $U$, the orthogonal operator
$$
R:z\perp\,(\text{span}<e_1,Y>)^\perp\rightarrow Y\circ z\in
(\text{span}<e_1,Y>)^\perp
$$
is not skew-symmetric because $X$ is in 
$(\text{span}<e_1,Y>)^\perp$. Therefore, we can find an eigenvector $w$
with eigenvalue $1$, without loss of generality, for $R$. Construct an octonion basis
in which $l_1:=1,l_2:=Y,l_3:=w,l_4:=l_2l_3,$ etc.
This choice will leave the second fundamental form
unchanged while make $T^3_{41}=1$.

With $T^3_{41}=1$, the first identity in~\eqref{symm} with
$Z=l_3,Y_1=l_1$
and $Y_2=l_2$ gives $T^3_{32}=-1$. By skew-symmetry of $T^3$, its upper left
4-by-4 block is determined to be identical with that of $\tilde{T}^3$. The orthogonality
of $T^3$ then implies that the upper right 4-by-4 and the lower left 4-by-4
blocks of $T^3$ are zero.

Now a calculation using the first identity of~\eqref{symm} establishes that the
lower right $4$-by-$4$ block of $T^3$ is of the form
$$
\begin{pmatrix}0&-a&0&-b\\a&0&b&0\\0&-b&0&a\\b&0&-a&0\end{pmatrix}.
$$
On the other hand, setting $W_1=W_2=l_3$, the
coefficient of $(x_6)^2(y_5)^2$ of the first term on the left
in~\eqref{important} is
$$
4(T^3_{65})^2=4a^2
$$
and is $0$ on the right. The coefficient of $(x_6)^2(y_5)^2$ of
the second and third terms on the left is
$$
\sum_{i=3}^8 T^i_{68}T^i_{75}=T^4_{68}T^4_{75}=-b^2
$$
because the second identity in~\eqref{symm} derives that $T^4_{68}=-T^3_{58}=b$,
$T^4_{75}=-T^3_{85}=-b$, $T^5_{68}=T^3_{48}=0$, $T^7_{75}=T^3_{35}=0$, and
$T^8_{68}=T^3_{18}=0$; it is $-1$ on the right hand side. Therefore, we obtain
$$
4a^2-2b^2=-2,\qquad a^2+b^2=1,
$$
where the second identity is obtained by the orthogonality of $T^3$.
It follows that $a=0$ and $b=\pm 1$. We may assume $b=1$; otherwise,
changing $l_5$ to $-l_5$ does the job. In other words, $T^3=\tilde{T}^3$ now. 

That $T^4=\tilde{T}^4$ follows from the second identity of~\eqref{symm} and
that
$T^3=\tilde{T}^3$. For instance, choosing $Z_1=e_3, Z_2=e_4,Y_1=e_1$ and $Y_2=e_2$
we obtain $T^4_{42}=T^3_{32}=-1,$ etc.
\end{proof}

\begin{lemma} The upper left and lower right $4$-by-$4$ blocks of\, $T^5,T^6,T^7,T^8$
are all zero.
\end{lemma}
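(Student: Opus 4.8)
The plan is to feed the octonionic data $T^3=\tilde T^3$, $T^4=\tilde T^4$ just obtained in Lemma~\ref{zero} into the polarized Hurwitz identities~\eqref{symm}, supplemented by the row-orthonormality of each $T^p$ (which holds by~\eqref{mirror} together with the vanishing forced by~\eqref{RANK}, just as in the $(4,5)$ case) and, for the last few entries, by the interplay identity~\eqref{important}. Throughout one works with the Cayley--Dickson splitting $\mathbb{O}=\mathbb{H}\oplus\mathbb{H}e_5$, i.e. octonion indices $\{1,2,3,4\}$ versus $\{5,6,7,8\}$; with respect to this splitting $\tilde T^3,\tilde T^4$ are block-diagonal while each $\tilde T^p$, $5\le p\le 8$, is block-off-diagonal, and octonion multiplication by $e_3$ or $e_4$ preserves each half whereas multiplication by $e_5,\dots,e_8$ interchanges the two halves.

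First I would annihilate most of each diagonal $4$-by-$4$ block of $T^p$, $5\le p\le 8$, by the following device. Fix $p\ge 5$ and indices $\alpha,\mu$ in the same half. Then $\overline{e_\mu}e_\alpha\in\mathbb{H}$, and away from the pair-types corresponding to $\{e_1,e_2\}$ and $\{e_3,e_4\}$ inside that half one has $e_\alpha=\pm e_\mu e_\gamma$ for some $\gamma\in\{3,4\}$. Applying the second identity in~\eqref{symm} with $Y=e_\mu$, $Z_1=e_\gamma$, $Z_2=e_p$ turns the left side into $\pm T^p_{\alpha\mu}$ and the right side into $\mp\langle(e_\mu e_p)\circ e_\mu,e_\gamma\rangle$; since $e_\mu e_p$ lies in the half opposite to $e_\mu$, this last quantity is an off-diagonal block entry of $T^\gamma=\tilde T^\gamma$, hence zero. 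This disposes of every entry of the diagonal blocks of $T^5,\dots,T^8$ except those indexed by the $\{e_1,e_2\}$-type and $\{e_3,e_4\}$-type pairs, i.e. at most two entries (modulo skew-symmetry) in the upper-left block and two in the lower-right block of each $T^p$; moreover the first identity in~\eqref{symm} with $Z=e_p$ relates a lower-right residual entry of $T^p$ to an upper-left one of the same $T^p$, cutting the genuinely unknown quantities down to two per matrix.

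For those residual entries I would first identify the upper-right/lower-left blocks of $T^p$ by running the same \emph{reachability} argument with the roles of the two halves exchanged, so that a known off-diagonal block of $\tilde T^3$ or $\tilde T^4$ again appears on the other side; feeding the resulting off-diagonal structure into the orthonormality of the rows of $T^p$ then forces the residual diagonal-block entries to vanish. Any sign ambiguity that survives I would resolve exactly as in the final step of the proof of Lemma~\ref{zero}: substitute $W_1=W_2=e_p$ into~\eqref{important}, expand, and match the coefficient of a monomial $(x_i)^2(y_j)^2$ with $i,j$ in the relevant half; the right-hand side, being expressed entirely through the known $\tilde T$-matrices, pins the square of the residual entry to $0$.

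The main obstacle is organizational rather than conceptual: one must track the octonion structure constants and the block positions of the entries $T^p_{\alpha\mu}=\langle e_\alpha\circ e_\mu,e_p\rangle$ carefully enough to be certain that the handful of entries not killed by the first device are genuinely pinned down — and not merely constrained — by orthogonality together with~\eqref{important}, and one must re-run the argument for each of $T^5,\dots,T^8$ separately, since the symmetry among $e_5,\dots,e_8$ is a feature of the model only and is not available a priori for the unknown hypersurface (equivalently, one must justify the relabelling $e_3\leftrightarrow e_p$ once the off-diagonal structure of $T^p$ has been established).
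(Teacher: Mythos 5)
Your first device --- using the second identity of~\eqref{symm} with $Y=e_\mu$, $Z_1=e_\gamma$ ($\gamma\in\{3,4\}$), $Z_2=e_p$ to trade a diagonal-block entry of $T^p$, $p\ge 5$, for an off-diagonal-block entry of $T^3=\tilde T^3$ or $T^4=\tilde T^4$, which vanishes --- is exactly the paper's mechanism, and your bookkeeping of which pairs survive (the $\{3,4\}$- and $\{7,8\}$-indexed entries, the $\{1,2\}$- and $\{5,6\}$-indexed ones being already dead by Lemma~\ref{ssymm} and the first identity with $Z=e_p$) is correct. The gap is in how you kill the one genuinely residual unknown per matrix, say $c:=T^5_{34}=-T^5_{78}$ up to sign. (i) You cannot ``identify the upper-right/lower-left blocks of $T^p$'' at this stage: running~\eqref{symm} on a cross-half pair $(\alpha,\mu)$ forces $Z_1=e_\gamma$ with $\gamma\ge 5$, so the other side of the identity is a cross-half entry of another \emph{unknown} matrix $T^\gamma$, $\gamma\ge 5$ --- never an entry of $\tilde T^3$ or $\tilde T^4$ in an off-diagonal position. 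Indeed the paper needs two further lemmas, with thirteen a priori unknowns, to pin down those blocks, and it uses the present lemma as input; so deducing the diagonal blocks from row-orthonormality after ``knowing'' the off-diagonal blocks is circular. (ii) The fallback via~\eqref{important} with $W_1=W_2=e_p$, $p\ge 5$, also fails as stated: since right multiplication by $e_p$ swaps the two halves of ${\mathbb O}$, the second and third terms $<X\circ(X\overline{W}),(YW)\circ Y>$ contribute, to any monomial such as $(x_3)^2(y_4)^2$, sums of the form $\sum_q T^q_{37}T^q_{84}$ over cross-half entries, which are unknown and are not controlled by the row-orthonormality of each individual $T^q$ (that controls sums over $\mu$, not over $q$). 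One obtains $4c^2=$ (a combination of unknown off-diagonal entries), not $c^2=0$.

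The ingredient you are missing is the full mirror identity~\eqref{mirror} with \emph{mixed} lower indices, i.e.\ $p=5$, $q=3$, $\alpha=4$, $\beta=2$: it expresses $\tfrac12\sum_\mu(S^5_{4\mu}S^3_{2\mu}+S^3_{4\mu}S^5_{2\mu})$ in terms of the already-known second-fundamental-form entries $S^a_{p\alpha}=A^a_{p\alpha}$ from~\eqref{A2}, and the right-hand side computes to $0$. With $T^3=\tilde T^3$ and $T^5_{21}=0$ already in hand, the left-hand side collapses to $T^5_{43}\,T^3_{23}=T^5_{43}$, whence $c=0$; the same computation with $p=6,7,8$ finishes the lemma. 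So the residual entries are killed not by orthogonality or by~\eqref{important}, but by the coupling between the $T^p$'s and the known matrices $A_\alpha$ that~\eqref{mirror} provides.
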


\begin{proof} Applying the second identity of~\eqref{symm} to $Z_1=e_5,Z_2=e_3$ and
$Y=e_1$, we obtain $T^5_{31}=T^3_{51}=0$ by Lemma~\ref{zero}. Applying
the first identity of~\eqref{symm} to $Z=e_5,Y_1=e_1,$ and $Y_2=e_7$ we see
$T^5_{57}=T^5_{31}=0$. Continuing in this fashion, we can verify that all
the upper left 4-by-4 and lower right 4-by-4 entries of $T^5$ are zero except for
$T^5_{34}=-T^5_{43}=T^5_{78}=-T^5_{87}$.

To show $T^5_{43}=0$, we let $p=5, q=3,\alpha=4$ and $\beta=2$ in~\eqref{mirror}.
The matrix entries in~\eqref{A2} give $S^a_{\alpha\mu}=A^a_{\alpha\mu}$,
and recall we set
$S^p_{\alpha\mu}=T^p_{\alpha\mu}$. We derive
$$
T^5_{43}=T^5_{43}T^3_{23}+T^3_{41}T^5_{21}=-2\sum_a (A^a_{54}A^a_{32}+A^a_{34}A^a_{52})=0.
$$
 
 The same goes through for $T^6,T^7,T^8$ with $p$ replaced by $6,7,8$.
\end{proof}

\begin{lemma} The lower left $4$-by-$4$ blocks of $T^5,T^6,T^7,T^8$ are
\begin{eqnarray}\nonumber
\aligned
T^5:\begin{pmatrix}0&-a&-b&-c\\a&0&-d&-e\\b&d&0&-f\\c&e&f&0\end{pmatrix},
&\qquad T^6:\begin{pmatrix}-a&0&j&-i\\0&-a&-h&g\\g&i&k&0\\h&j&0&k\end{pmatrix},\\
T^7:\begin{pmatrix}-b&-j&0&m\\-g&e&m&0\\0&l&-b&g\\l&0&j&e\end{pmatrix},
&\qquad T^8:\begin{pmatrix}-c&i&-m&0\\-h&-d&0&m\\-l&0&-d&i\\0&l&-h&-c\end{pmatrix}
\endaligned
\end{eqnarray}
a priori for some thirteen unknowns $a$ through $m$. 
\end{lemma}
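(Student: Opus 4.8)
The plan is to determine the lower-left $4\times4$ blocks of $T^5,T^6,T^7,T^8$ by exactly the mechanism that produced Corollary~\ref{qh} in the $(4,5)$ setting: feed octonion basis vectors into the two polarized symmetry identities of \eqref{symm}, read off the resulting linear relations among the entries $T^p_{\alpha\mu}=\langle e_\alpha\circ e_\mu,e_p\rangle$, and propagate them. The inputs available here are richer than in the $(4,5)$ case, since Lemma~\ref{ssymm} and the preceding lemma already force each $T^p$ with $p\ge 5$ into the block form $\begin{pmatrix}0&-C_p^{tr}\\ C_p&0\end{pmatrix}$ (with vanishing ninth row and column by Corollary~\ref{sp}), so only the four $4\times4$ matrices $C_5,\dots,C_8$ remain to be pinned down, and moreover $T^3=\tilde T^3$, $T^4=\tilde T^4$ are completely known from Lemma~\ref{zero}.

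Concretely, I would first transcribe \eqref{symm} into entrywise identities. Putting $Y_1=e_i$, $Y_2=e_j$, $Z=e_k$ in the first identity and using the octonion table $e_ie_k=\pm e_m$ yields relations $\pm T^k_{mj}=\mp T^k_{m'i}$ internal to a single $T^k$; putting $Y=e_i$, $Z_1=e_k$, $Z_2=e_l$ in the second identity yields cross relations $\pm T^l_{mi}=\mp T^k_{m'i}$ linking $T^k$ to $T^l$. Running the cross relations with the index pair $\{k,l\}$ ranging over $\{3,5\},\{3,6\},\{3,7\},\{3,8\}$ and then $\{4,5\},\dots,\{4,8\}$ pins down a large portion of $C_5,\dots,C_8$ against the known blocks of $T^3$ and $T^4$ and introduces a small stock of new scalars; the internal relations together with the global skew-symmetry of each $T^p$ then force the repetition pattern in the displayed matrices, collapsing $C_5$ to a skew-symmetric $4\times4$ matrix (six parameters $a,\dots,f$), $C_6$ to its displayed shape (five new parameters $g,\dots,k$), $C_7$ to its shape (two new parameters $l,m$), and $C_8$ with no further freedom, for a total of thirteen. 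At the two or three places where the internal and cross relations are not sharp enough on their own --- as already happened in the preceding lemma for entries such as $T^5_{43}$ --- one invokes \eqref{mirror} evaluated on the explicit Clifford data \eqref{A2}. Crucially, and this is the sense of the phrase \emph{a priori}, one does not yet invoke the third-fundamental-form comparison \eqref{important}, whose coefficient matching is what will later collapse these thirteen unknowns to the homogeneous values.

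The work is bookkeeping rather than conceptual, and the main hazard is twofold. First, the non-associativity of $\mathbb{O}$ makes the signs in the products $e_ie_j$ appearing inside \eqref{symm} genuinely position-dependent, so one must track the Fano-plane structure carefully; second, one must never confuse the honest octonion products (which are what occur as $YZ$, $Y_1Z$, etc., inside \eqref{symm}) with the a priori unknown circle product $\circ$, about which only the relations derived so far may be used. A further point of care is the ninth coordinate: $Z$ ranges only over $e_3,\dots,e_8$ and $T^p$ has zero ninth row and column, so \eqref{symm} may be applied only with admissible indices, and a handful of relations that the pure octonion model would suggest are simply unavailable and must be supplied instead by \eqref{mirror}. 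Organizing the computation by handling $C_5$ first (anchored to $T^3=\tilde T^3$) and then $C_6,C_7,C_8$ in turn makes the displayed forms drop out.
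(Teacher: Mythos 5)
Your proposal matches the paper's proof: the paper likewise fixes the six unknowns $a,\dots,f$ in the lower-triangular part of $T^5$'s lower-left block, names $T^6_{71},T^6_{81},T^6_{72},T^6_{82},T^6_{73},T^7_{81},T^7_{54}$ as $g,\dots,m$, and derives every remaining entry by repeated application of the two identities in \eqref{symm}, anchored to the already-known $T^3=\tilde T^3$ and $T^4=\tilde T^4$. The only (harmless) difference is your hedge that \eqref{mirror} might be needed at a few spots; the paper reserves \eqref{mirror} for the adjacent lemmas and gets this one from \eqref{symm} alone.
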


\begin{proof} Assuming the unknowns $a$ through $f$ for the lower
triangular block of the lower 4-by-4 block of $T^5$ and setting
$T^6_{71}:=g, T^6_{81}:=h, T^6_{72}:=i,T^6_{82}:=j,T^6_{73}=k,T^7_{81}=l$
and $T^7_{54}=m$, one uses the two identities in~\eqref{symm} repeatedly to obtain
all other entries in terms of these thirteen unknowns.
\end{proof}

\begin{lemma} The only nonzero entries in the above matrices are
$a,f,k,l,m$ of magnitude $1$ 
with the property that $a=-f=k$ and $l=m$. 
\end{lemma}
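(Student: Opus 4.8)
The plan is to extract the thirteen unknowns $a,\dots,m$ from the same two sources already exploited in the $T^3=\tilde T^3$, $T^4=\tilde T^4$ normalization: the quadratic relation~\eqref{mirror} between the $S^a_{p\alpha}$, $S^p_{\alpha\mu}$, and the gradient identity~\eqref{important} obtained from $\langle\nabla\langle q,W_1\rangle,\nabla\langle q,W_2\rangle\rangle=\langle\nabla\langle\tilde q,W_1\rangle,\nabla\langle\tilde q,W_2\rangle\rangle$ (Lemma~\ref{same}). Concretely, first I would run~\eqref{mirror} with $p,q$ in $\{3,\dots,8\}$, $\alpha,\beta$ chosen in the ranges $\{3,4\}$ versus $\{5,\dots,8\}$, using $S^a_{\alpha\mu}=A^a_{\alpha\mu}$ from~\eqref{A2} and $T^3=\tilde T^3$, $T^4=\tilde T^4$ from Lemma~\ref{zero}: just as $T^5_{43}=0$ was deduced, the analogous choices force most of $b,c,d,e,g,h,i,j$ to vanish, leaving only $a,f,k,l,m$ possibly nonzero. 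The octonion products $A^a_{\alpha\mu}$ are explicit from the displayed $A_3,\dots,A_8$, so these are finitely many bilinear identities whose solution is routine bookkeeping; I would organize them by the index pattern so the cancellations are visible rather than computing all cases.

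Second, with the surviving parameters $a,f,k,l,m$, I would impose the orthogonality of each $T^p$ (already used for $T^3$) to get the normalization $a^2+f^2+(\text{one more})=1$-type relations, and then feed specific $W_1,W_2$ choices into~\eqref{important} exactly as in the proof of Lemma~\ref{zero}, where setting $W_1=W_2=l_3$ and reading off the $(x_6)^2(y_5)^2$ coefficient produced $4a^2-2b^2=-2$. Here I would pick the octonion units $W_1,W_2$ among $l_4,\dots,l_8$ so that the quadratic monomials appearing isolate $a$ versus $f$ versus $k$ versus $m$ one at a time; matching to the homogeneous $\tilde T^5,\dots,\tilde T^8$ (whose blocks are the displayed $J,I,L,K$ patterns) gives the target values. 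The sign assertions $a=-f=k$ and $l=m$ should drop out of the same comparisons together with the second identity in~\eqref{symm}, which ties entries of $T^5,\dots,T^8$ to entries of $T^3,T^4$; any residual sign ambiguities (like $b=\pm1$ in Lemma~\ref{zero}) are absorbed by basis changes $l_i\mapsto -l_i$ that, by~\eqref{ci}, preserve the second fundamental form.

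The magnitude-$1$ claim is the step I expect to be the real obstacle, because orthogonality of $T^p$ alone only pins down sums of squares, not the individual values, and it is the interaction between~\eqref{mirror} and~\eqref{important} that must force $|a|=|f|=|k|=|l|=|m|=1$ rather than some intermediate solution. In the $(4,5)$ case the analogous point was Corollary~\ref{decisive}, where the $x_1^2y_2y_4$ coefficient comparison in~\eqref{important} gave $2k^2=2$; here there are more parameters and more monomials, so the risk is that a naive coefficient match yields an underdetermined system. The remedy is to use several $(W_1,W_2)$ pairs simultaneously and to also use the mixed-index instances of~\eqref{mirror} with $p\neq q$ (e.g. $p=5,q=6$ or $p=7,q=8$) that cross-link $k$ and $m$; the octonion associativity/alternativity built into the explicit $A_\alpha$ guarantees these are consistent and, I believe, exactly rigid enough. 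Once the parameters are fixed to the homogeneous values, $T^5=\tilde T^5,\dots,T^8=\tilde T^8$, hence $X\circ Y=X*Y$, and as in the $(4,5)$ case the third fundamental form agrees with that of the homogeneous example, completing the classification.
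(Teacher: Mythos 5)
Your core tool---the mixed-index instances of \eqref{mirror} with $p\neq q$---is exactly what the paper uses, but the paper packages it much more efficiently, and your plan misdiagnoses where the rigidity for the magnitude-$1$ claim comes from. Since the only nonzero row of each $B_a$ is the last one (Lemma~\ref{lm5}, \eqref{RANK}) and $S^p_{\alpha\mu}=0$ when $\alpha$ or $\mu$ equals $9$ (Corollary~\ref{sp}), the identity \eqref{mirror} for indices $\alpha,\beta\leq 8$ collapses to $T^p(T^q)^{tr}+T^q(T^p)^{tr}=2\delta_{pq}I$, which by the skew-symmetry of Lemma~\ref{ssymm} is the Clifford relation $T^iT^j+T^jT^i=-2\delta_{ij}I$. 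Anticommuting the unknown $T^5,\dots,T^8$ against the already-normalized $T^3=\tilde T^3$ and $T^4=\tilde T^4$ then does everything at once: $(i,j)=(3,5)$ or $(4,5)$ gives $b=c=d=e=0$ and $a=-f$; $(i,j)=(3,6)$ or $(4,6)$ gives $g=h=i=j=0$ and $a=k$; $(i,j)=(3,7)$ gives $l=m$. In particular the linear relations you hoped to extract from \eqref{symm} and \eqref{important} are already consequences of \eqref{mirror} alone. As for the magnitude-$1$ assertion, which you flag as the ``real obstacle'' requiring an interaction with \eqref{important}: once the extra unknowns vanish, each row of the surviving lower-left block of $T^p$ contains a single nonzero entry, so the diagonal case $(T^p)^2=-I$ of the same Clifford relation (equivalently, orthogonality plus skew-symmetry) forces each surviving entry to square to $1$; there is no underdetermined system to worry about. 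The gradient identity \eqref{important} is genuinely needed only in the corollary that follows, to fix the residual signs $a=1$ and $l=-1$ (the analogue of Corollary~\ref{decisive}), which cannot be reached by further admissible basis changes once $T^3$ and $T^4$ have been pinned down. So your plan would get there, but the detour through \eqref{important} at this stage is unnecessary, and recognizing the anticommutation structure is what turns your ``routine bookkeeping'' into three one-line computations.
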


\begin{proof} We know $T^iT^j=-T^jT^i$ when $i\neq j$ 
by~\eqref{mirror},~\eqref{RANK}, Corollary~\ref{sp} and Lemma~\ref{ssymm}.

Now, $(i,j)=(3,5)$
or $(4,5)$ gives $a=-f$ and $b=c=d=e=0$. $(i,j)=(3,6)$ or $(4,6)$ gives
$a=k$ and $g=h=i=j=0$. Lastly, $(i,j)=(3,7)$ gives $l=m$.
\end{proof}

\begin{corollary} $a=1$ and $l=-1$. In particular, $T^5=\tilde{T}^5,T^6=\tilde{T}^6,
T^7=\tilde{T}^7,T^8=\tilde{T}^8$.
\end{corollary}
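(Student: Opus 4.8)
The plan is to run the argument of Corollary~\ref{decisive} in the octonionic setting. By Lemma~\ref{zero} we already have $T^3=\tilde T^3$ and $T^4=\tilde T^4$ as complete matrices, not merely up to sign, and the three lemmas preceding this corollary show that the matrices $T^5,\dots,T^8$ are, respectively, $a\,\tilde T^5,\ a\,\tilde T^6,\ -l\,\tilde T^7,\ -l\,\tilde T^8$, where $a,l$ have magnitude $1$: feeding $a=-f=k$, $l=m$ into the displayed lower-left blocks (with the upper-left and lower-right blocks zero and the upper-right blocks recovered by skew-symmetry) gives exactly these multiples of the $\tilde T^p$ listed after~\eqref{A3}. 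Thus only the two signs $a=\pm1$, $l=\pm1$ remain to be pinned down. Since anticommutation of the $T^p$ and the identities~\eqref{symm} are unchanged under flipping the sign of a single $T^p$, the signs must be extracted from the quadratic relation~\eqref{important}, which is available here because the second fundamental forms of our hypersurface and of the homogeneous example coincide (Proposition~\ref{prop1}), hence $\langle\nabla q^a,\nabla q^b\rangle=\langle\nabla\tilde q^a,\nabla\tilde q^b\rangle$ by Lemma~\ref{same}.

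To fix $a$, put $W_1=e_3$, $W_2=e_5$ in~\eqref{important}, restricted to $Z=x_9=y_9=0$. Since $\langle X\circ Y,e_3\rangle=\sum_{\alpha\mu}\tilde T^3_{\alpha\mu}x_\alpha y_\mu$ carries no sign while $\langle X\circ Y,e_5\rangle=a\sum_{\alpha\mu}\tilde T^5_{\alpha\mu}x_\alpha y_\mu$, the first term on the left of~\eqref{important} equals $4a$ times the product of these two bilinear forms, whereas the first term on the right equals $4$ times the same product. In the two remaining terms $\langle X\circ(X\overline{W_i}),(YW_j)\circ Y\rangle$ each circle product contributes a single matrix $T^p$ and the inner product annihilates all cross indices $p\ne q$, so these terms have the shape $\sum_p(\cdots T^p\cdots)(\cdots T^p\cdots)$; in each $p$-summand the sign of $T^p=\pm\tilde T^p$ is squared away (using $a^2=l^2=1$), so the left and the right sides agree there summand by summand. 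Hence~\eqref{important} collapses to equating the first terms, i.e. $(a-1)$ times the product of the two bilinear forms above vanishes identically; as $\tilde T^3,\tilde T^5\ne0$ this product is a nonzero polynomial, forcing $a=1$. The identical computation with $W_1=e_3$, $W_2=e_7$ produces on the left $-4l$ times the product of the $\tilde T^3$- and $\tilde T^7$-bilinear forms, against $4$ times that product on the right, forcing $-l=1$, i.e. $l=-1$; equivalently, one may mimic verbatim the $(x_6)^2(y_5)^2$ coefficient comparison already carried out inside the proof of Lemma~\ref{zero}.

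With $a=1$ and $l=-1$ the multiples above become equalities $T^p=\tilde T^p$ for all $3\le p\le8$, where the $\tilde T^p$ are the matrices listed after~\eqref{A3}. Consequently $X\circ Y=X*Y$ for all octonions $X,Y$, so the third fundamental form of $M_{+}$ agrees with that of the homogeneous OT-FKM example of multiplicities $(6,9)$; combined with the agreement of the second fundamental forms (Proposition~\ref{prop1}), this identifies the isoparametric hypersurface with the homogeneous one and completes the classification in the $(6,9)$ case.

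The only genuine work is the octonionic bookkeeping behind the two assertions used above: that, after restricting~\eqref{important} to $Z=x_9=y_9=0$, the first term is linear of degree exactly one in the relevant sign, and that the other two terms are insensitive to that sign. Both follow mechanically from $\langle X\circ Y,e_p\rangle=\sum_{\alpha\mu}T^p_{\alpha\mu}x_\alpha y_\mu$ once one uses that $T^3=\tilde T^3$, $T^4=\tilde T^4$ are fixed and that each of $T^5,\dots,T^8$ is a sign times an explicit matrix; so, unlike in Corollary~\ref{decisive}, no delicate monomial extraction is needed and the problem reduces to the two scalar equations $a=1$, $l=-1$.
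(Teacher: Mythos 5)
Your proposal is correct and rests on the same mechanism as the paper's proof: with $T^3=\tilde{T}^3$, $T^4=\tilde{T}^4$ already fixed and $T^5,T^6,T^7,T^8$ equal to $a\tilde{T}^5,a\tilde{T}^6,-l\tilde{T}^7,-l\tilde{T}^8$ with $a^2=l^2=1$, one feeds $W_1,W_2\in\{e_3,e_5,e_7\}$ into \eqref{important} and reads the signs off the first term. Where you differ is in execution: the paper extracts a single monomial coefficient ($x_4x_6(y_1)^2$ for the pair $(e_5,e_3)$, $x_1x_5(y_4)^2$ for $(e_7,e_3)$) and verifies numerically that the second and third terms contribute equally on both sides, whereas you observe that those two terms are \emph{globally} sign-insensitive, since each summand $\langle T^p(X\overline{W_1}),X\rangle\,\langle T^p(Y),YW_2\rangle$ carries the same $T^p$ in both factors, so the sign $\epsilon_p=\pm1$ enters as $\epsilon_p^2=1$. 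This lets you conclude from the identical vanishing of the full polynomials $(a-1)\langle\tilde{T}^3(Y),X\rangle\langle\tilde{T}^5(Y),X\rangle$ and $(-l-1)\langle\tilde{T}^3(Y),X\rangle\langle\tilde{T}^7(Y),X\rangle$, each a product of nonzero bilinear forms. That is a genuine streamlining: it explains structurally why the specific coefficients the paper computes had to agree, and it removes the delicate monomial bookkeeping. The reduction to the two signs via the preceding lemmas and the final identification of the hypersurface are as in the paper.
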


\begin{proof} The proof is similar to the one in Corollary~\ref{decisive}.
Choosing $W_1=e_5$ and $W_2=e_3$, the $x_4x_6(y_1)^2$ coefficient of
the second term (on both sides) is
$$
T^4_{68}T^4_{31}-T^4_{42}T^4_{31}=-2,
$$
while that of the third term (on both sides) is
$$
-T^6_{62}T^6_{51}-T^6_{48}T^6_{51}=-a^2-ka=-2.
$$
Therefore, the $x_4x_6(y_1)^2$ coefficient of the first term satisfies
$$
a=T^5_{61}T^3_{41}=\tilde{T}^5_{61}\tilde{T}^3_{41}=1,
$$
so that $k=1$. In particular, $T^5=\tilde{T}^5$ and $T^6=\tilde{T}^6$.
%%Choosing $W_1=e_6$ and $W_2=e_4$,
%%the $x_1x_7(y_3)^2$ coefficient of the second
%%term (on both sides) is
%%$$
%%T^3_{76}T^3_{23}+T^3_{14}T^3_{23}=-2,
%%$$
%%while that of the third term (on both sides) is
%%$$
%%T^5_{74}T^5_{83}-T^5_{16}T^5_{83}=-2.
%%$$
%%Therefore, the $x_1x_7(y_3)^2$ coefficient of the first term satisfies
%%$$
%%k=T^6_{73}T^4_{13}=\tilde{T}^6_{73}\tilde{T}^4_{13}=1.
%%$$
%%In particular, $T^6=\tilde{T}^6.$
Choosing $W_1=e_7$ and $W_2=e_3$,
the $x_1x_5(y_4)^2$ coefficient of the second term (on both sides) is
$$
-T^4_{57}T^4_{24}-T^4_{13}T^4_{24}=-2,
$$
while that of the third term (on both sides) is
$$
T^8_{53}T^8_{64}-T^8_{17}T^8_{64}=-m^2-lm=-2.
$$
Therefore, the $x_1x_5(y_4)^2$ coefficient of the first term is
$$
-m=T^7_{54}T^3_{14}=\tilde{T}^7_{54}\tilde{T}^3_{14}=1.
$$
In particular, $T^7=\tilde{T}^7$. It follows that $T^8=\tilde{T}^8$.
\end{proof}
As a consequence, the isoparametric hypersurface is precisely the homogeneous one.

\end{document}